\DeclareFontFamily{OMX}{lmex}{}
\DeclareFontShape{OMX}{lmex}{m}{n}{<->lmex10}{}
\theoremstyle{plain}
\newtheorem{theo}{Théorème}[section]
\newtheorem{prop}[theo]{Proposition}
\newtheorem{propapp}[equation]{Proposition}
\newtheorem{conj}[theo]{Conjecture}
\newtheorem{coro}[theo]{Corollaire}
\newtheorem{lemm}[theo]{Lemme}
\newtheorem{lemmapp}[equation]{Lemme}
\theoremstyle{definition}
\newtheorem{defi}[theo]{Définition}
\newtheorem{defiapp}[equation]{Définition}
\theoremstyle{remark}
\newtheorem{rema}[theo]{Remarque}
\DeclareMathOperator{\im}{im}
\DeclareMathOperator{\rg}{rg}
\DeclareMathOperator{\card}{card}
\DeclareMathOperator{\supp}{supp}
\DeclareMathOperator{\Hom}{Hom}
\DeclareMathOperator{\Ext}{Ext}
\DeclareMathOperator{\Mod}{Mod}
\DeclareMathOperator{\Rep}{Rep}
\DeclareMathOperator{\Lie}{Lie}
\DeclareMathOperator{\Gal}{Gal}
\DeclareMathOperator{\detfr}{d\acute{e}t}
\newcommand{\llbrack}{[\![}
\newcommand{\rrbrack}{]\!]}
\newcommand{\dfn}{\overset{\text{déf}}{=}}
\newcommand{\iso}{\overset{\sim}{\longrightarrow}}
\newcommand{\osi}{\overset{\sim}{\longleftarrow}}
\newcommand{\h}{\overset{\mathrm{H}}{\cdot}}
\newcommand{\diff}{\mathrm{d}}
\newcommand{\un}{\underline{1}}
\newcommand{\Nbb}{\mathbb{N}}
\newcommand{\Z}{\mathbb{Z}}
\newcommand{\Q}{\mathbb{Q}}
\newcommand{\Fp}{\mathbb{F}_p}
\newcommand{\Zp}{\Z_p}
\newcommand{\Qp}{\Q_p}
\newcommand{\Qpbar}{\overline{\Qp}}
\newcommand{\Oe}{\mathcal{O}_{\!E}}
\newcommand{\pe}{\varpi_{\!E}}
\newcommand{\ke}{k_E}
\newcommand{\A}[1]{\Oe/\pe^{#1}\Oe}
\newcommand{\val}{\operatorname{val}_p}
\newcommand{\Nrm}{\operatorname{N}_{F/\Qp}}
\newcommand{\Res}{\operatorname{Res}_{F/\Qp}}
\newcommand{\For}{\mathcal{O}}
\newcommand{\GL}{\mathrm{GL}}
\newcommand{\GSp}{\mathrm{GSp}}
\newcommand{\epsa}{\varepsilon^{-1} \circ \alpha}
\newcommand{\oma}{\omega^{-1} \circ \alpha}
\newcommand{\epsth}{\cdot (\varepsilon^{-1} \circ \theta)}
\newcommand{\omth}{\cdot (\omega^{-1} \circ \theta)}
\newcommand{\N}{\widetilde{N}}
\newcommand{\Nz}{\widetilde{N}_0}
\newcommand{\Nwz}{N_{w,0}}
\newcommand{\Ns}[1]{N_{s_{#1} \dots s_1}}
\newcommand{\Nsz}[1]{N_{s_{#1} \dots s_1,0}}
\newcommand{\Plz}[1]{\Pi_{#1,0}}
\newcommand{\NT}{T^+ \ltimes \widetilde{N}_0}
\newcommand{\V}{\widetilde{V}}
\newcommand{\Vwz}{V_{w,0}}
\newcommand{\fin}[1]{#1\!\operatorname{-fin}}
\newcommand{\Homcont}{\Hom^\mathrm{cont}}
\newcommand{\HomGr}{\Homcont_\mathrm{Grp}}
\newcommand{\HomTp}[1]{\Hom_{A[T^+]}(A[T(F)],#1)_{\fin{T(F)}}}
\newcommand{\Hc}[1][\bullet]{\mathrm{H}^{#1}}
\newcommand{\Rc}[1][\bullet]{\mathrm{R}^{#1}}
\newcommand{\Ord}{\operatorname{Ord}_{B(F)}}
\newcommand{\HOrd}[1][\bullet]{\Hc[#1]\!\Ord}
\newcommand{\ROrd}[1][\bullet]{\Rc[#1]\!\Ord}
\newcommand{\HOrdQp}{\operatorname{H^1Ord}_{B(\Qp)}}
\newcommand{\Ind}{\operatorname{Ind}^{G(F)}_{B^-(F)}}
\newcommand{\IndQp}{\operatorname{Ind}^{G(\Qp)}_{B^-(\Qp)}}
\newcommand{\Indd}{\operatorname{Ind}^{\GL_2(\Qp)}_{B_2^-(\Qp)}}
\newcommand{\IndP}{\operatorname{Ind}^{G(F)}_{P^-(F)}}
\newcommand{\IndPQp}{\operatorname{Ind}^{G(\Qp)}_{P^-(\Qp)}}
\newcommand{\IndPa}{\operatorname{Ind}^{G(\Qp)}_{P_\alpha^-(\Qp)}}
\newcommand{\E}{\mathcal{E}}
\newcommand{\C}{\mathcal{C}}
\newcommand{\Clis}{\C^\infty}
\newcommand{\BB}{(B^-(F),B(F))}
\newcommand{\BG}{B^-(F) \backslash G(F)}
\newcommand{\BC}{B^-(F) \backslash C}
\newcommand{\BD}{B^-(F) \backslash D}
\newcommand{\w}{\dot{w}}
\newcommand{\wP}{\widetilde{w}_P}
\newcommand{\wPd}{\dot{\widetilde{w}}_P}
\newcommand{\WP}{\widetilde{W}_P}
\title{Extensions entre séries principales $p$-adiques et modulo $p$ de $G(F)$}
\author{Julien Hauseux}
\date{}
\begin{document}

\maketitle



\tableofcontents

\section{Introduction}

\subsection*{Contexte}

Soient $F$ une extension finie de $\Qp$ et $G$ un groupe réductif connexe déployé sur $F$. On fixe une extension finie $E$ de $\Qp$.

Supposons $F=\Qp$, le centre de $G$ connexe et le groupe dérivé de $G$ simplement connexe (par exemple $G=\GL_n$ ou $G=\GSp_{2n}$). Dans \cite{BH}, Breuil et Herzig associent à toute représentation continue ordinaire générique $\rho : \Gal(\Qpbar/\Qp) \to \widehat{G}(E)$ où $\widehat{G}$ désigne le groupe dual de $G$, une représentation continue unitaire $\Pi^\mathrm{ord}(\rho)$ de $G(\Qp)$ sur un $E$-espace de Banach dont les constituants sont des séries principales. Ils conjecturent l'unicité des facteurs directs indécomposables de cette représentation étant donnés les gradués de leurs filtrations par le socle (\cite[Conjecture 3.5.1]{BH}). Cela conduit naturellement au problème de connaître les extensions entre séries principales $p$-adiques de $G(\Qp)$ dans le cas générique. Toutes ces questions ont également une variante modulo $p$.

Supposons seulement $F \neq \Qp$. Dans \cite{BP}, Breuil et Pa{\v{s}}k{\=u}nas ont montré que pour  $\GL_2(F)$ il n'existe aucune extension non scindée entre séries principales modulo $p$ distinctes. Cela laisse supposer un résultat similaire pour $G(F)$.

Dans cet article, nous utilisons le $\delta$-foncteur $\HOrd$ des parties ordinaires dérivées d'Emerton (\cite{Em2}) relatif à un sous-groupe de Borel pour avancer dans ces deux directions.

\subsection*{Principaux résultats}

Soient $B \subset G$ un sous-groupe de Borel et $T \subset B$ un tore maximal déployé. On note $B^- \subset G$ le sous-groupe de Borel opposé à $B$ par rapport à $T$. On note $W$ le groupe de Weyl de $(G,T)$, $\Delta$ les racines simples de $(G,B,T)$ et pour tout $\alpha \in \Delta$, on note $s_\alpha \in W$ la réflexion simple correspondante. On note $\varepsilon : F^\times \to \Zp^\times$ le caractère cyclotomique $p$-adique et $\Oe$ l'anneau des entiers de $E$.
On calcule les $\Ext^1$ dans les catégories abéliennes de représentations continues unitaires sur $E$ en utilisant les extensions de Yoneda.

Soient $\chi,\chi' : T(F) \to \Oe^\times \subset E^\times$ des caractères continus unitaires.
Le foncteur $\Ind$ est exact et admet le foncteur $\Ord$ comme quasi-inverse à gauche, donc il induit une injection $E$-linéaire
\begin{equation*}
\Ext^1_{T(F)}(\chi',\chi) \hookrightarrow \Ext^1_{G(F)} \left( \Ind \chi',\Ind \chi \right).
\end{equation*}
Lorsque $\chi' \neq \chi$ il n'existe aucune extension non scindée de $\chi'$ par $\chi$, donc les éventuelles extensions entre leurs induites sont construites autrement.

Supposons $F=\Qp$, le centre de $G$ connexe et le groupe dérivé de $G$ simplement connexe. On note $\theta$ la somme des poids fondamentaux relatifs à $\Delta$. Dans ce cas pour tout $\alpha \in \Delta$ tel que $s_\alpha(\chi) \neq \chi$, il existe aussi une extension non scindée entre les induites de $\chi \epsth$ et $s_\alpha(\chi) \epsth$. On la construit par induction parabolique à partir de l'unique extension non scindée entre les séries principales de $\GL_2(\Qp)$ correspondantes. Le résultat suivant (Théorème \ref{theo:ext1}) montre que l'on obtient toutes les extensions entre séries principales dans le cas générique par ces deux procédés.

\begin{theo} \label{theo:1}
Soit $\chi : T(\Qp) \to  \Oe^\times \subset E^\times$ un caractère continu unitaire.
\begin{enumerate}[(i)]
\item Si $\chi' : T(\Qp) \to  \Oe^\times \subset E^\times$ est un autre caractère continu unitaire, alors
\begin{equation*}
\Ext^1_{G(\Qp)} \left( \IndQp \chi' \epsth,\IndQp \chi \epsth \right) \neq 0
\end{equation*}
si et seulement si $\chi'=\chi$ ou $\chi'=s_\alpha(\chi)$ avec $\alpha \in \Delta$.
\item Soit $\alpha \in \Delta$. Si $s_\alpha(\chi) \neq \chi$, alors
\begin{equation*}
\dim_E \Ext^1_{G(\Qp)} \left( \IndQp s_\alpha(\chi) \epsth,\IndQp \chi \epsth \right) = 1.
\end{equation*}
\item Si $\chi$ est faiblement générique, alors le foncteur $\IndQp$ induit un isomorphisme $E$-linéaire
\begin{multline*}
\Ext_{T(\Qp)}^1 \left( \chi \epsth, \chi \epsth \right) \\
\iso \Ext_{G(\Qp)}^1 \left( \IndQp  \chi \epsth,\IndQp \chi \epsth \right).
\end{multline*}
\end{enumerate}
\end{theo}

Supposons seulement $F \neq \Qp$. Dans ce cas, le résultat suivant (Théorème \ref{theo:ext1F}) montre que toute extension entre deux séries principales est induite à partir d'une extension entre leurs caractères.

\begin{theo} \label{theo:2}
Soit $\chi : T(F) \to \Oe^\times \subset E^\times$ un caractère continu unitaire.
\begin{enumerate}[(i)]
\item Si $\chi' : T(F) \to \Oe^\times \subset E^\times$ est un autre caractère continu unitaire, alors
\begin{equation*}
\Ext_{G(F)}^1 \left( \Ind \chi', \Ind \chi \right) \neq 0
\end{equation*}
si et seulement si $\chi'=\chi$.
\item Le foncteur $\Ind$ induit un isomorphisme $E$-linéaire
\begin{equation*}
\Ext_{T(F)}^1 \left( \chi,\chi \right) \iso \Ext_{G(F)}^1 \left( \Ind \chi, \Ind \chi \right).
\end{equation*}
\end{enumerate}
\end{theo}

Enfin, on a les résultats analogues modulo $p$ (c'est-à-dire dans les catégories de représentations lisses admissibles sur le corps résiduel $\ke$ de $\Oe$). Ils sont démontrés dans les preuves des théorèmes \ref{theo:ext1} et \ref{theo:ext1F}.

\subsection*{Méthodes utilisées}

On procède par réduction modulo $p^k$ et dévissage. En caractéristique positive, on suit la stratégie d'Emerton qui utilise une propriété du $\delta$-foncteur $\HOrd$ dérivée de la relation d'adjonction entre les foncteurs $\Ind$ et $\Ord$. Pour cela, on commence par calculer $\HOrd$ sur une induite.

On fixe une uniformisante $\pe$ de $\Oe$ et un entier $k \geq 1$. On note $A$ l'anneau $\A{k}$, $\omega : F^\times \to A^\times$ l'image de $\varepsilon$ dans $A^\times$ et $\ell : W \to \Nbb$ la longueur relative à $\Delta$. On a le résultat suivant (Corollaire \ref{coro:HnOrd}).

\begin{theo} \label{theo:3}
Soient $\chi : T(F) \to A^\times$ un caractère lisse et $n \in \Nbb$. On suppose $A=\ke$ ou $n \leq 1$. Alors on a un isomorphisme $T(F)$-équivariant
\begin{equation*}
\HOrd[n]\left(\Ind \chi \omth\right) \cong \bigoplus_{[F:\Qp] \cdot \ell(w)=n} w(\chi) \omth.
\end{equation*}
\end{theo}

Détaillons les différentes étapes de ce calcul. Les foncteurs $\HOrd$ sont construits à partir des $A$-modules de cohomologie d'un sous-groupe compact $N_0 \subset B(F)$ et d'une action (dite de Hecke) d'un sous-monoïde $T^+ \subset T(F)$ sur ces derniers. Soit $U$ une représentation lisse localement admissible de $T(F)$ sur $A$.
Dans la section \ref{sec:2}, nous définissons une filtration classique (dite de Bruhat) de $\Ind U$ par des sous-$B(F)$-représentations. Nous calculons son gradué et nous montrons qu'elle induit une filtration des $A$-modules $\Hc(N_0,\Ind U)$.
Dans la section \ref{sec:3}, nous montrons comment la cohomologie et l'action de Hecke se décomposent à travers un dévissage de $N_0$. Puis, nous utilisons des dévissages successifs de $N_0$ pour calculer sa cohomologie à valeurs dans le gradué de la filtration de Bruhat de $\Ind U$ ainsi que l'action de Hecke de $T^+$.
Dans la section \ref{sec:4}, nous utilisons les résultats précédents pour calculer $\HOrd[1](\Ind U)$ en toute généralité (Corollaire \ref{coro:H1Ord}) et nous démontrons le théorème \ref{theo:3}.

Nous déterminons ensuite les extensions entre séries principales continues unitaires $p$-adiques de $G(F)$. Soient $\chi,\chi' : T(F) \to \Oe^\times \subset E^\times$ des caractères continus unitaires.
Dans la section \ref{sec:5}, nous utilisons le théorème \ref{theo:3} et la stratégie d'Emerton pour calculer les extensions entre les induites des réductions modulo $\pe^k$ de ces caractères. En particulier avec $k=1$, nous obtenons les analogues modulo $p$ des théorèmes \ref{theo:1} et \ref{theo:2}. Puis, par un passage à la limite projective et un produit tensoriel, nous en déduisons les théorèmes \ref{theo:1} et \ref{theo:2}. Enfin en supposant vraie une conjecture d'Emerton, nous calculons les extensions de longueur supérieure entre séries principales lisses modulo $p$ de $G(F)$ dans la catégorie des représentations lisses localement admissibles de $G(F)$ sur $\ke$ (Théorème \ref{theo:extn}).

\subsection*{Notations et conventions}

Soit $F$ une extension finie de $\Qp$.
On note $\val : F^\times \to \Q$ la valuation $p$-adique normalisée par $\val(p)=1$ et $|~|_p$ la valeur absolue $p$-adique correspondante.
On note $\varepsilon : F^\times \to \Zp^\times$ le caractère cyclotomique $p$-adique et $\omega : F^\times \to \Fp^\times$ sa réduction modulo $p$.
On note enfin $\Nrm : F^\times \to \Qp^\times$ l'application norme.
On a donc $\varepsilon(x) = \Nrm(x) |\Nrm(x)|_p$ pour tout $x \in F^\times$.

Soient $G$ un groupe réductif connexe déployé sur $F$, $B \subset G$ un sous-groupe de Borel et $T \subset B$ un tore maximal déployé. On note $B^- \subset G$ le sous-groupe de Borel opposé à $B$ par rapport à $T$ et $N$ le radical unipotent de $B$. On note enfin $S$ le plus grand sous-tore déployé de $\Res T$.

On note $\Phi^+$ les racines positives de $(G,B,T)$ et $\Delta \subset \Phi^+$ les racines simples.
On note $W$ le groupe de Weyl de $(G,T)$ et pour tout $\alpha \in \Phi^+$ on note $s_\alpha \in W$ la réflexion correspondante. On appelle décomposition réduite de $w \in W$ toute écriture de $w$ en un produit de longueur minimal de réflexions $s_\alpha$ avec $\alpha \in \Delta$ et on note $\ell(w)$ cette longueur. On note $w_0$ l'élément de longueur maximale de $W$ et $d$ l'entier $\ell(w_0) = \dim N$. Pour tout $w \in W$, on fixe un représentant $\w \in G(F)$ de $w$ dans le normalisateur de $T(F)$.

On fixe une extension finie $E$ de $\Qp$. On note $\Oe$ l'anneau des entiers de $E$ et $\ke$ le corps résiduel de $\Oe$. On fixe une uniformisante $\pe$ de $\Oe$. On désigne par $A$ une $\Oe$-algèbre locale artinienne de corps résiduel $\ke$ (par exemple $A=\A{k}$ avec $k \geq 1$ entier) et on note encore $\omega : F^\times \to A^\times$ l'image de $\varepsilon$ dans $A^\times$.

Par un groupe de Lie $p$-adique, on entend une variété analytique sur $\Qp$ munie d'une structure de groupe pour laquelle le produit et le passage à l'inverse sont analytiques sur $\Qp$. En particulier, sa dimension est calculée sur $\Qp$.
Les groupes $G(F)$, $B(F)$, $T(F)$\dots{} sont munis de leur structure naturelle de groupe de Lie $p$-adique.

Soit $H$ un groupe de Lie $p$-adique. Une représentation $V$ de $H$ sur $A$ est dite \emph{lisse} si pour tout $v \in V$ le fixateur de $v$ dans $H$ est ouvert, \emph{admissible} si de plus $V^{H_0}$ est de type fini sur $A$ pour tout sous-groupe ouvert $H_0$ de $H$ et \emph{localement admissible} si pour tout $v \in V$ la sous-$H$-représentation de $V$ sur $A$ engendrée par $v$ est admissible. En caractéristique nulle, on considère des représentations continues unitaires admissibles de $H$ sur des $E$-espaces de Banach (voir l'appendice \ref{app:rep}).

\subsection*{Remerciements}

Ce travail a été réalisé sous la direction de Christophe Breuil. Je lui exprime mes remerciements les plus sincères pour m'avoir fait part de ses idées, ainsi que pour ses explications et ses remarques.

\numberwithin{theo}{subsection}

\section{Filtration de Bruhat} \label{sec:2}

Soit $U$ une représentation lisse de $T(F)$ sur $A$. Par inflation et induction nous obtenons une représentation lisse $\Ind U$ de $G(F)$ sur $A$ (voir \cite[§ 4.1]{Em1}). Nous construisons, à partir de la décomposition de Bruhat, une filtration de cette induite par des sous-$B(F)$-représentations. Puis, nous montrons que l'on obtient ainsi une filtration des $A$-modules de cohomologie d'un sous-groupe ouvert compact $N_0$ de $N(F)$ à valeurs dans $\Ind U$. Enfin, nous adaptons ces résultats aux induites paraboliques d'un caractère.

\subsection{Définition et calcul du gradué}

On définit une filtration de $\Ind U$ par des sous-$B(F)$-représentations et on calcule son gradué.

\subsubsection*{Décomposition de Bruhat et filtration}

On considère les doubles classes $B^- \w B$ avec $w \in W$ ; il faut donc adapter les résultats classiques concernant les doubles classes $B^- \w B^-$, en remarquant que $B^- \w B = B^- \w \w_0 B^- \w_0$ et en remplaçant $w$ par $w w_0$ notamment.
Pour tout $w \in W$, on définit la cellule correspondante en posant
\begin{equation*}
C(w) \dfn B^- (F) \w B(F).
\end{equation*}
La décomposition de Bruhat (voir \cite[Partie II, § 1.9]{Jan}) s'écrit
\begin{equation*}
G(F) = \coprod_{w \in W} C(w).
\end{equation*}
On définit l'ordre de Bruhat sur $W$ de la façon suivante : pour tous $w,w' \in W$, on a $w' \leq w$ si et seulement si il existe une décomposition réduite $s_1 \dots s_{\ell(w)}$ de $w$ et des entiers $1 \leq k_1 < \dots < k_{\ell(w')} \leq \ell(w)$ tels que $w'=s_{k_1} \dots s_{k_{\ell(w')}}$. D'après \cite[Théorème 3.13]{BTC}, pour tout $w \in W$ on a
\begin{equation*}
\overline{C(w)} = \coprod_{w' \geq w} C(w')
\end{equation*}
avec $\overline{C(w)}$ l'adhérence de $C(w)$ dans $G(F)$ pour la topologie $p$-adique (voir \cite[Corollaire 3.15]{BTC}). On a donc une unique cellule ouverte $C(1)=B^-(F)B(F)$ appelée la grosse cellule ; elle est contenue dans tout ouvert $\BB$-biinvariant (c'est-à-dire $B^-(F)$-invariant par translation à gauche et $B(F)$-invariant par translation à droite) de $G(F)$. De même, on a une unique cellule fermée $C(w_0) = B^-(F)\w_0$. Pour tout $w \in W$, on pose
\begin{equation*}
G_w \dfn \coprod_{w' \leq w} C(w')
\end{equation*}
qui est par définition le plus petit ouvert $\BB$-biinvariant de $G(F)$ contenant $C(w)$. Par construction pour tous $w,w' \in W$, on a $G_{w'} \subset G_w$ si et seulement si $w' \leq w$. Enfin pour tout $r \in \llbrack -1,d \rrbrack$, on pose
\begin{equation*}
G_r \dfn \bigcup_{\ell(w)=r} G_w = \coprod_{\ell(w) \leq r} C(w).
\end{equation*}
On définit ainsi une filtration $(G_r)_{r \in \llbrack -1,d \rrbrack}$ de $G(F)$ par des sous-ensembles ouverts $\BB$-biinvariants. On a $G_{-1}=\emptyset$, $G_d=G_{w_0} = G(F)$ et $G_0 = G_1$ ($0 \in \Nbb, 1 \in W$) est la grosse cellule de $G(F)$. Par construction pour tout $w \in W$, la cellule $C(w)$ est fermée dans $G_w$ et dans $G_{\ell(w)}$.

Pour tout sous-ensemble $\BB$-biinvariant $C$ de $G(F)$, on définit une sous-$B(F)$-représentation de $\Ind U$ en posant
\begin{equation*}
\left(\Ind U\right)(C) \dfn \left\{f \in \Ind U ~|~ \supp(f) \subset C\right\}
\end{equation*}
où $\supp$ désigne le support d'une fonction localement constante (c'est-à-dire le sous-ensemble ouvert et fermé des points en lesquels elle ne s'annule pas). En particulier pour tout $r \in \llbrack -1,d \rrbrack$, on pose
\begin{equation*}
I_r \dfn \left(\Ind U\right)(G_r).
\end{equation*}
On définit ainsi la \emph{filtration de Bruhat} $(I_r)_{r \in \llbrack -1,d \rrbrack}$ de $\Ind U$. On a $I_{-1}=0$, $I_d = \Ind U$ et $I_0$ correspond aux fonctions à support dans la grosse cellule de $G(F)$.

\subsubsection*{Calcul du gradué}

On commence par quelques rappels topologiques.
Un espace séparé est dit localement profini si tout point admet une base de voisinages constituée d'ouverts compacts.
Tout sous-espace localement fermé (c'est-à-dire intersection d'un ouvert et d'un fermé) d'un espace localement profini est encore localement profini. L'image d'un espace localement profini dans un espace séparé par une application ouverte est encore localement profini. Un espace topologique est dit profini s'il est compact et localement profini.

\begin{lemm} \label{lemm:paracpt}
Tout recouvrement ouvert d'un espace profini admet un raffinement fini constitué d'ouverts compacts disjoints.
\end{lemm}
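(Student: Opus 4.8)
Le plan est d'exploiter successivement la profinitude locale pour raffiner le recouvrement par des ouverts compacts, puis la compacité pour en extraire un sous-recouvrement fini, et enfin le caractère séparé pour disjoindre ces ouverts tout en préservant leur compacité.

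Soit donc $X$ un espace profini et $(U_i)_{i \in I}$ un recouvrement ouvert de $X$. D'abord, pour chaque $x \in X$, la profinitude locale de $X$ fournit un voisinage ouvert compact $V_x$ de $x$ contenu dans un certain $U_{i(x)}$, de sorte que $(V_x)_{x \in X}$ est un recouvrement ouvert de $X$ raffinant $(U_i)_{i \in I}$. Ensuite, $X$ étant compact, il existe $x_1,\dots,x_n \in X$ tels que $X = V_{x_1} \cup \dots \cup V_{x_n}$.

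Il reste à rendre ces ouverts compacts deux à deux disjoints. Comme $X$ est séparé, chaque $V_{x_j}$ est fermé dans $X$ ; pour $1 \leq k \leq n$, je poserais alors
\[
W_k \dfn V_{x_k} \cap \bigl( X \setminus (V_{x_1} \cup \dots \cup V_{x_{k-1}}) \bigr),
\]
sous-ensemble qui est ouvert (intersection de deux ouverts, le second étant le complémentaire d'une réunion finie de fermés, vide si $k=1$) et compact (fermé dans le compact $V_{x_k}$). Par construction, les $W_k$ sont deux à deux disjoints, recouvrent $X$ puisque les $V_{x_j}$ le font, et vérifient $W_k \subset V_{x_k} \subset U_{i(x_k)}$. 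En supprimant les $W_k$ éventuellement vides, on obtient le raffinement fini cherché.

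Le seul point véritablement à surveiller — et il est mineur — est l'étape de disjonction : il faut garantir que les différences restent simultanément ouvertes et compactes, ce qui repose entièrement sur le fait que, $X$ étant séparé, ses parties compactes y sont fermées ; le reste n'est qu'un enchaînement direct des hypothèses de profinitude locale et de compacité.
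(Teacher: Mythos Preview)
Your proof is correct and follows essentially the same approach as the paper's: refine by compact open sets using the profinite base, extract a finite subcover by compactness, then disjointify using that compact opens are clopen. The only cosmetic difference is in the last step, where the paper partitions via the Boolean atoms $\bigl(\bigcap_{j\in J} O_j\bigr)\cap\bigl(\bigcap_{i\notin J} X\setminus O_i\bigr)$ indexed by subsets $J$, while you use the sequential differences $W_k = V_{x_k}\setminus\bigcup_{j<k}V_{x_j}$; both are standard and your variant is arguably more economical.
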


\begin{proof}
Soit $X$ un espace topologique profini. Puisque la topologie de $X$ admet une base constituée d'ouverts compacts, tout recouvrement ouvert admet un raffinement constitué d'ouverts compacts que l'on peut supposer fini par compacité de $X$. Si $(O_i)_{i \in I}$ est un tel recouvrement, alors
\begin{equation*}
\left( \left( \bigcap_{j \in J} O_j \right) \bigcap \left(\bigcap_{i \in I - J} X - O_i \right) \right)_{J \subset I}
\end{equation*}
est un raffinement fini de $(O_i)_{i \in I}$ constitué d'ouverts compacts disjoints.
\end{proof}

Le groupe $G(F)$ est localement profini (comme tout groupe de Lie $p$-adique) et le quotient $\BG$ est compact donc profini.
On déduit du lemme \ref{lemm:paracpt} que le fibré principal $\pi : G(F) \twoheadrightarrow \BG$ est trivial (c'est-à-dire qu'il admet une section continue). En effet, d'après \cite[Partie II, § 1.10]{Jan} il admet des sections locales, donc on peut trouver un recouvrement de $\BG$ par des ouverts trivialisants. En choisissant un raffinement de ce recouvrement par des ouverts disjoints, on peut recoller ces sections locales en une section globale. On note $\sigma : \BG \hookrightarrow G(F)$ une telle section continue.

On calcule à présent le gradué de la filtration $(I_r)_{r \in \llbrack -1,d \rrbrack}$. Pour tout sous-ensemble $\BB$-biinvariant $C$ de $G(F)$ on pose
\begin{multline*}
\C_C \dfn \{\text{$f : C \to U$ localement constante à support compact} \\
\text{modulo $B^-(F)$ | $f(bg) = b \cdot f(g)$ pour tous $b \in B^-(F), g \in C$}\}
\end{multline*}
et on munit ce $A$-module de l'action lisse de $B(F)$ par translation à droite.

\begin{lemm} \label{lemm:index}
Soit $C$ un ouvert $\BB$-biinvariant de $G(F)$. La restriction des fonctions à $C$ induit un isomorphisme $B(F)$-équivariant
\begin{equation*}
\left(\Ind U\right)(C) \cong \C_C.
\end{equation*}
\end{lemm}

\begin{proof}
L'injectivité est claire puisque les éléments du membre de gauche sont à support dans $C$. Pour la surjectivité on utilise le fait que les éléments du membre de droite sont à support compact modulo $B^-(F)$. En les prolongeant sur $G(F)$ par $0$, les fonctions obtenues sont donc encore localement constantes.
\end{proof}

\begin{prop} \label{prop:index}
Soient $C$ un ouvert $\BB$-biinvariant de $G(F)$ et $D$ un fermé $\BB$-biinvariant de $C$. Alors on a une suite exacte courte de représentations lisses de $B(F)$ sur $A$
\begin{equation*}
0 \to \C_{C - D} \to \C_C \to \C_D \to 0
\end{equation*}
où le premier morphisme non trivial est le prolongement sur $C$ par $0$ et le second est la restriction à $D$.
\end{prop}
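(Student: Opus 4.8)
The plan is to realize the three maps concretely as "extension by zero" and "restriction", check exactness pointwise using the local structure, and verify $B(F)$-equivariance from the definitions.

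First I would set up the maps. Since $D$ is closed in $C$, extending a function $f \in \C_D$ by zero on $C - D$ gives a locally constant function on $C$ (the support of the extension is closed in $C$), and it remains $B^-(F)$-equivariant and compactly supported modulo $B^-(F)$ because $D$ is $\BB$-biinvariant in $C$; this gives the injection $\C_{C-D} \hookrightarrow \C_C$, which is manifestly injective since $C - D$ is open. Restriction to $D$ gives $\C_C \to \C_D$, which is $B(F)$-equivariant since $D$ is stable under right translation by $B(F)$. The composite is zero because a function supported in $C - D$ vanishes on $D$.

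Next I would check exactness at the middle term: if $f \in \C_C$ restricts to $0$ on $D$, then $\supp(f) \subset C - D$, and since $\supp(f)$ is open and closed in $C$ hence in $C - D$, the restriction of $f$ to $C - D$ lies in $\C_{C-D}$ and maps back to $f$. The only real content is surjectivity of restriction $\C_C \twoheadrightarrow \C_D$: given $g \in \C_D$, I need to extend it to a locally constant, $B^-(F)$-equivariant function on $C$, compactly supported modulo $B^-(F)$. Here I would use the continuous section $\sigma : \BG \hookrightarrow G(F)$ produced above (restricted to the relevant biinvariant subsets): the support of $g$ modulo $B^-(F)$ is a compact open subset $\overline{\supp(g)}$ of $\BD$, which by Lemma \ref{lemm:paracpt} can be written as a finite disjoint union of compact opens; using $\sigma$ one transports $g$ to a locally constant $U$-valued function on a compact open subset of $C$, then spreads it out by $B^-(F)$-equivariance, and extends by zero elsewhere on $C$. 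Because $\overline{\supp(g)}$ is closed in $\BC$ (as $D$ is closed in $C$) one may even choose the section and the neighborhoods so that the extension is locally constant on all of $C$; alternatively, one invokes Lemma \ref{lemm:index} to identify $\C_C$ with $(\Ind U)(C)$ and $\C_D$ with functions on $G(F)$ supported in $\overline{D}$, reducing the extension problem to extending a locally constant function off a closed set, which is automatic.

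The main obstacle is this surjectivity, specifically ensuring that the extension of $g$ by zero across the "boundary" $\overline{C} - C$ direction stays locally constant: this is exactly where one needs $D$ closed \emph{in $C$} and the compact-support-modulo-$B^-(F)$ condition, so that $\supp(g)$ stays away from the part of the boundary where trouble could occur. Once the set-theoretic extension is in hand, $B(F)$-equivariance of all three maps and the fact that the sequence consists of smooth $B(F)$-representations are immediate from the definitions, since all the subsets involved are $B(F)$-stable under right translation and the $\C_{(-)}$ construction is functorial for inclusions of biinvariant subsets.
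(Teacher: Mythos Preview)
Your main line of argument matches the paper's proof: injectivity and middle-exactness are immediate, and surjectivity is handled by partitioning $\pi(\supp g)$ into finitely many disjoint compact opens via Lemma~\ref{lemm:paracpt}, lifting via $\sigma$, thickening inside $B^-(F)\backslash C$, spreading by $B^-(F)$-equivariance, and extending by zero. (There is a slip in your first paragraph: you write $f\in\C_D$ where you mean $f\in\C_{C-D}$; the reason extension by zero is locally constant on $C$ is that $\supp(f)$, being compact modulo $B^-(F)$, has closed image in $B^-(F)\backslash C$ and is therefore already closed in $C$.)

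The proposed alternative, however, does not work. Lemma~\ref{lemm:index} applies only to \emph{open} biinvariant subsets of $G(F)$, and $D$ is typically not open in $G(F)$ (in the intended application $D$ is a union of Bruhat cells of fixed length). There is no identification of $\C_D$ with a subspace of $\Ind U$, and ``extending a locally constant function off a closed set'' is precisely the nontrivial step, not something automatic. In the paper's proof this thickening is carried out carefully: for each constant value $u_k$ of $g$ one chooses a compact open $O_k\subset B^-(F)\backslash C$ with $O_k\cap(B^-(F)\backslash D)=\pi(D_k)$ and the $O_k$ pairwise disjoint, then sets $C_k=B_k^-\,\sigma(O_k)$ with $B_k^-$ the stabilizer of $u_k$ in $B^-(F)$. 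The stabilizer is exactly what guarantees that distinct $B^-(F)$-translates of the $C_k$ are disjoint, so that the equivariant extension is well-defined and locally constant. Your sketch elides this point; if you drop the alternative route, you should make the role of the stabilizers explicit in the surjectivity step.
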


\begin{proof}
Il s'agit de \cite[Proposition 1.8]{BZ} que nous redémontrons.
L'injectivité du premier morphisme non trivial est évidente. On vérifie l'exactitude de cette suite au milieu. Soit $f \in \C_C$ dont la restriction à $D$ est nulle. Comme $f$ est localement constante, elle s'annule sur un voisinage ouvert de $D$. On en déduit que $f$ est à support dans $C - D$. Donc $f$ est le prolongement sur $C$ par $0$ d'un élément de $\C_{C - D}$. On examine enfin la surjectivité du dernier morphisme non trivial. Soit donc $f \in \C_D$.

On montre qu'il existe $n \in \Nbb$ et des sous-ensembles compacts disjoints $(D_k)_{k \in \llbrack 1,n \rrbrack}$ de $\sigma(\BD) \subset D$ tels que :
\begin{itemize}
\item $\supp(f) = \bigcup_{k \in \llbrack 1,n \rrbrack} B^-(F) D_k$ ;
\item $f$ est constante sur $D_k$ pour tout $k \in \llbrack 1,n \rrbrack$ ;
\item $\pi(D_k)$ est ouvert dans $\BD$ pour tout $k \in \llbrack 1,n \rrbrack$.
\end{itemize}
À partir d'un recouvrement de $D$ par des ouverts sur lesquels $f$ est constante, on obtient un recouvrement ouvert du compact $(\sigma \circ \pi)(\supp(f))$. D'après le lemme \ref{lemm:paracpt}, ce recouvrement admet un raffinement fini constitué d'ouverts compacts disjoints $(D_k)_{k \in \llbrack 1,n \rrbrack}$. Pour tout $k \in \llbrack 1,n \rrbrack$, $f$ est constante sur $D_k$ par construction et $\pi$ induit un homéomorphisme de $(\sigma \circ \pi)(\supp(f))$ sur $\pi(\supp(f))$ qui est ouvert dans $\BD$, donc $\pi(D_k)$ est ouvert dans $\BD$. Enfin $\supp(f) = B^-(F) (\sigma \circ \pi)(\supp(f)) = \bigcup_{k \in \llbrack 1,n \rrbrack} B^-(F) D_k$.

Pour tout $k \in \llbrack 1,n \rrbrack$, on note $u_k \in U-\{0\}$ l'unique valeur de $f$ sur $D_k$ et $B_k^-$ le stabilisateur de $u_k$ dans $B^-(F)$. On montre qu'il existe des sous-ensembles ouverts $(C_k)_{k \in \llbrack 1,n \rrbrack}$ de $C$ tels que :
\begin{itemize}
\item $C_k \cap D = B_k^- D_k$ pour tout $k \in \llbrack 1,n \rrbrack$ ;
\item $\forall k,k' \in \llbrack 1,n \rrbrack, \forall b \in B^-(F), (b C_k) \cap C_{k'} \neq \emptyset \Rightarrow k=k'$ et $b \in B_k^-$.
\end{itemize}
Soit $k \in \llbrack 1,n \rrbrack$. Comme $\pi(D_k)$ est ouvert dans $\BD$, il existe un ouvert $O_k$ de $\BC$ tel que $O_k \cap (\BD) = \pi(D_k)$. Par compacité de $\pi(D_k)$, on peut supposer $O_k$ compact (quitte à recouvrir $O_k$ par des ouverts compacts de $\BC$ et à extraire un recouvrement fini de $\pi(D_k)$). Enfin puisque les $\pi(D_k)$ sont disjoints, on peut supposer les $O_k$ disjoints (quitte à remplacer $O_k$ par $O_k-\bigcup_{k' \in \llbrack 1,n \rrbrack - \{k\}} (O_{k'} \cap O_k)$).
On note $C_k$ le sous-ensemble $B^-_k \sigma(O_k)$ de $C$. C'est un ouvert de $C$ car $\sigma$ induit une trivialisation globale du fibré principal $C \twoheadrightarrow \BC$, $B^-_k$ est un ouvert de la fibre $B^-(F)$ et $O_k$ est un ouvert de la base $\BC$. Par construction $\sigma(O_k) \cap D = D_k$, d'où $C_k \cap D = B_k^- D_k$. De plus si $(b C_k) \cap C_k \neq \emptyset$, alors $b \in B_k^-$. Enfin $\pi(C_k)=O_k$ par construction et ces derniers sont disjoints, d'où $(B^- C_k) \cap C_{k'} = \emptyset$ lorsque $k \neq k'$.

Pour tout $k \in \llbrack 1,n \rrbrack$, on prolonge $f$ sur $C_k$ par $u_k$, puis sur $B^-(F) C_k$ par la formule $f(bg)=b \cdot f(g)$ pour tous $b \in B^-(F), g \in C$. Ce prolongement est bien défini grâce aux conditions sur les ouverts $(C_k)_{k \in \llbrack 1,n \rrbrack}$. Enfin on prolonge $f$ sur $C$ par $0$. Par construction $f$ est localement constante sur le sous-ensemble $f^{-1}(U-\{0\})=B^-(F) (\bigcup_{k \in \llbrack 1,n \rrbrack} C_k)$. Or $\pi(B^-(F) (\bigcup_{k \in \llbrack 1,n \rrbrack} C_k)) = \bigcup_{k \in \llbrack 1,n \rrbrack} O_k$ est compact. On en déduit que $f$ est localement constante et à support compact modulo $B^-(F)$. Ainsi $f \in \C_C$.
\end{proof}

Pour tout $r \in \llbrack 0,d \rrbrack$, on applique le lemme \ref{lemm:index} et la proposition \ref{prop:index} avec $C=G_r$ et $D=G_r - G_{r-1}$ et on obtient des isomorphismes $B(F)$-équivariants
\begin{equation*}
I_r / I_{r-1} \cong \C_{G_r} / \C_{G_{r-1}} \cong \C_{\coprod_{\ell(w)=r} C(w)} \cong \bigoplus_{\ell(w)=r} \C_{C(w)},
\end{equation*}
le dernier étant la conséquence du fait que $\coprod_{\ell(w)=r} C(w)$ est une partition finie de $G_r-G_{r-1}$ en sous-ensembles $\BB$-biinvariants fermés donc ouverts (par finitude).

Soit $w \in W$. Afin de donner une description plus explicite de la représentation $\C_{C(w)}$, on définit un sous-groupe fermé de $N$ stable sous l'action par conjugaison de $T$ en posant
\begin{equation} \label{Nw}
N_w \dfn N \cap (\w^{-1} N\w).
\end{equation}
On remarque que $N_w \cap N_{w_0w} = 1$ et on déduit de l'isomorphisme \eqref{prodNa} que $N_w N_{w_0w}=N$ (mais $N$ n'est pas en général le produit semi-direct de ces deux sous-groupes). On en conclut que le produit induit un homéomorphisme
\begin{equation*}
B^-(F) \times \{\w\} \times N_w(F) \iso C(w).
\end{equation*}
Ainsi, en notant $\Clis_c(N_w(F),U)$ le $A$-module constitué des fonctions localement constantes à support compact de $N_w(F)$ dans $U$, on a un isomorphisme $A$-linéaire
\begin{equation} \label{isoClis}
\C_{C(w)} \cong \Clis_c(N_w(F),U).
\end{equation}
On transporte l'action de $B(F)$ à travers cet isomorphisme. On décrit cette action sur $f \in \Clis_c(N_w(F),U)$ dans les cas suivants :
\begin{itemize}
\item si $t \in T(F)$, alors $(t f)(n)=(\w t \w^{-1}) \cdot f(t^{-1}nt)$ pour tout $n \in N_w(F)$ ;
\item si $n' \in N_w(F)$, alors $(n' f)(n)=f(nn')$ pour tout $n \in N_w(F)$ ;
\item si $n' \in N_{w_0w}(F)$ normalise $N_w(F)$, alors $(n' f)(n)=f(n'^{-1}nn')$ pour tout $n \in N_w(F)$.
\end{itemize}

\subsubsection*{Conclusion}

On a défini la filtration de Bruhat $(I_r)_{r \in \llbrack -1,d \rrbrack}$ de $\Ind U$ et pour tout $r \in \llbrack 0,d \rrbrack$, on a une suite exacte courte de représentations lisses de $B(F)$ sur $A$
\begin{equation} \label{filind}
0 \to I_{r-1} \to I_r \to \bigoplus_{\ell(w)=r} \Clis_c(N_w(F),U) \to 0.
\end{equation}

\subsection{Filtration de la cohomologie}

On montre que la filtration de Bruhat induit une filtration des $A$-modules de cohomologie d'un sous-groupe ouvert compact $N_0$ de $N(F)$ à valeurs dans $\Ind U$.

\subsubsection*{Rappels sur la cohomologie}

On fait quelques rappels sur la cohomologie d'un groupe de Lie $p$-adique en suivant \cite[§ 2.2]{Em2}. Soient $H$ un groupe de Lie $p$-adique et $V$ une représentation lisse de $H$ sur $A$. On note $\Hc(H,V)$ les $A$-modules de cohomologie de $H$ à valeurs dans $V$ calculés en utilisant des cochaînes localement constantes. On obtient ainsi un $\delta$-foncteur $\Hc(H,-)$ de la catégorie des représentations lisses de $H$ sur $A$ dans la catégorie des $A$-modules. On dit que $V$ est $H$-acyclique si $\Hc[n](H,V)=0$ pour tout entier $n>0$.

Soit $H_0$ un groupe de Lie $p$-adique compact. Dans ce cas on va montrer que le $\delta$-foncteur $\Hc(H_0,-)$ est universel. Soit $V$ une représentation lisse de $H_0$ sur $A$. On note $\Clis(H_0,V)$ le $A$-module constitué des fonctions localement constantes de $H_0$ dans $V$ muni de l'action de $H_0$ par translation à droite. En utilisant l'injection $H_0$-équivariante $V \hookrightarrow \Clis(H_0,V)$ qui envoie un élément de $V$ sur son orbite et le lemme \ref{lemm:inj} ci-dessous, on voit que les foncteurs $\Hc[n](H_0,-)$ sont effaçables pour tout entier $n>0$. En particulier, ils coïncident avec les foncteurs dérivés à droite du foncteur des $H_0$-invariants dans la catégorie des représentations lisses de $H_0$ sur $A$.

\begin{lemm} \label{lemm:inj}
Soient $H_0$ un groupe de Lie $p$-adique compact et $V$ un $A$-module. Alors le $A$-module $\Clis(H_0,V)$ muni de l'action de $H_0$ par translation à droite est une représentation lisse $H_0$-acyclique.
\end{lemm}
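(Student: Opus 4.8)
The plan is to exhibit $\Clis(H_0,V)$ as a coinduced (induced) module from the trivial subgroup and invoke the fact that such modules are acyclic for continuous cohomology. Concretely, since $H_0$ is compact, write $\Clis(H_0,V) = \mathrm{Ind}_{\{1\}}^{H_0} V$ where $V$ carries the trivial action: a locally constant function $H_0 \to V$ is the same datum as a family of "sections" and the right-translation action matches the induced action. I would first observe that smoothness is immediate: a locally constant function on the compact group $H_0$ takes finitely many values and is constant on the cosets of some open normal subgroup $H_1 \trianglelefteq H_0$, so it is fixed by $H_1$; hence every vector has open stabilizer. This gives the first assertion.

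For acyclicity, the key step is to build an explicit contracting homotopy on the complex of locally constant cochains $\mathrm{C}^\bullet(H_0,\Clis(H_0,V))$. Following Emerton (\cite[§ 2.2]{Em2}), the standard homogeneous (or inhomogeneous) cochain complex computing $\Hc(H_0,-)$ with locally constant cochains admits, for a coinduced module, a $H_0$-equivariant chain contraction: given a locally constant cochain $c : H_0^{n+1} \to \Clis(H_0,V)$, one defines $(hc)(h_0,\dots,h_{n-1})$ by "evaluating at the identity" in the coinduction variable and shifting, i.e. the usual formula
\begin{equation*}
(s c)(g_0,\dots,g_{n-1})(x) = c(x, g_0 x, \dots, g_{n-1} x)(1),
\end{equation*}
and checks that $\partial s + s \partial = \mathrm{id}$ on cochains of positive degree. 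The only point needing care is that $s c$ is again \emph{locally constant}: this follows because $c$ is locally constant and $H_0$ is compact, so the relevant evaluation map stays locally constant (a uniform open subgroup works for all the finitely many relevant arguments at once). Hence $\Hc[n](H_0,\Clis(H_0,V)) = 0$ for all $n>0$.

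Alternatively, and perhaps cleaner to write in this smooth-representation setting, I would argue via effaceability and Shapiro's lemma: the functor $\Hc[n](H_0,-)$ is the $n$-th derived functor of $(-)^{H_0}$ on smooth $H_0$-representations (this is exactly what the surrounding text is in the process of establishing), and $\Clis(H_0,V) = \mathrm{coInd}_{\{1\}}^{H_0} V$; since $\{1\}$ is a closed (open) subgroup, Shapiro's lemma gives $\Hc[n](H_0, \mathrm{coInd}_{\{1\}}^{H_0} V) \cong \Hc[n](\{1\}, V) = 0$ for $n>0$. However, since the universality of the $\delta$-functor is being bootstrapped from precisely this lemma, I would present the direct homotopy argument to avoid circularity, and only remark on the Shapiro interpretation afterwards. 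The main obstacle is thus entirely bookkeeping: verifying that the contracting homotopy preserves local constancy, which is where the compactness of $H_0$ is genuinely used; everything else is the formal manipulation of the bar complex.
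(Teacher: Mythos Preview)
Your proposal is correct, and in fact the paper takes exactly your second route: it observes that a compact $p$-adic Lie group is profinite, identifies $\Clis(H_0,V)$ with the discrete induced module $\mathrm{Ind}_{\{1\}}^{H_0} V$, and simply cites Shapiro's lemma from Serre's \emph{Cohomologie galoisienne} (Chapitre I, \S 2.5). Your concern about circularity is unfounded: in Serre's treatment, the acyclicity of induced discrete modules over a profinite group is proved directly at the level of the continuous cochain complex (essentially by the contracting homotopy you sketch), with no appeal to any derived-functor identification. So the paper's one-line citation is legitimate, and your explicit homotopy argument is just unpacking what that citation contains in the special case of the trivial subgroup. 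The two approaches are therefore the same in substance; you have merely written out what the paper delegates to a reference. (Incidentally, the formula you give for the homotopy $s c$ is slightly garbled in its indexing---you mix homogeneous and inhomogeneous conventions---but the intended map and the compactness argument for local constancy are fine.)
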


\begin{proof}
C'est un résultat classique. Par compacité $H_0$ est profini et $\Clis(H_0,V)$ est un $H_0$-module discret induit. Le lemme de Shapiro permet de conclure (voir \cite[Chapitre I, § 2.5]{Ser}).
\end{proof}

\subsubsection*{Calculs topologiques}

Soit $N_0$ un sous-groupe ouvert compact de $N(F)$.

\begin{lemm} \label{lemm:profini}
L'espace topologique $\BG/N_0$ est profini.
\end{lemm}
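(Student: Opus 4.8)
L'idée est d'appliquer des propriétés générales des quotients d'un espace compact par l'action continue d'un groupe compact. Le sous-groupe $N_0$ agit à droite sur $G(F)$ par translation, donc à droite sur $\BG$, et l'application quotient $q : \BG \twoheadrightarrow \BG/N_0$ est continue, surjective et ouverte (pour tout ouvert $O$ de $\BG$, l'ensemble $q^{-1}(q(O)) = \bigcup_{n \in N_0} On$ est ouvert comme réunion de translatés de $O$). On a déjà remarqué que $\BG$ est profini, et en particulier compact ; comme $q$ est continue et surjective, $\BG/N_0$ est donc compact.

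Pour la séparation de $\BG/N_0$, je montrerais que la relation d'équivalence sur $\BG$ définie par les $N_0$-orbites est fermée dans $\BG \times \BG$ : c'est l'image de l'application continue $\BG \times N_0 \to \BG \times \BG$, $(x,n) \mapsto (x,xn)$, dont la source est compacte (produit de deux compacts) et le but séparé, donc une application fermée. Comme de plus $q$ est ouverte, il s'ensuit que $\BG/N_0$ est séparé.

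Il resterait à vérifier que tout point de $\BG/N_0$ admet une base de voisinages constituée d'ouverts compacts. Étant donnés $\bar x \in \BG/N_0$ et un voisinage ouvert $\bar V$ de $\bar x$, l'image réciproque $q^{-1}(\bar V)$ est un ouvert de $\BG$ contenant la fibre $q^{-1}(\bar x)$, qui est une $N_0$-orbite, donc compacte. Comme $\BG$ est profini, donc localement profini, chaque point de cette orbite possède un voisinage ouvert compact contenu dans $q^{-1}(\bar V)$ ; par compacité de l'orbite, un nombre fini de ces voisinages la recouvre, de réunion un ouvert compact $W \subset q^{-1}(\bar V)$. L'image $q(W)$ est alors un voisinage de $\bar x$ (car $q$ est ouverte), compacte (image continue d'un compact) et contenue dans $\bar V$, ce qui conclut.

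Le seul point réellement délicat est la séparation de $\BG/N_0$, et tout y repose sur la compacité de $N_0$, qui rend à la fois la relation d'orbite fermée et les fibres de $q$ compactes ; le reste n'utilise que le caractère ouvert de $q$ et le fait, déjà acquis, que $\BG$ est profini.
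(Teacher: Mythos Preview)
Your proof is correct and follows essentially the same route as the paper's. The paper's argument is just more compressed: it cites Bourbaki (TG~III, \S4, Prop.~2, Cor.~1) for the fact that the quotient of a compact Hausdorff space by a compact group action is compact Hausdorff, and invokes the general principle stated earlier in the text that the image of a locally profinite space in a Hausdorff space by an open map is again locally profinite. You unpack both of these by hand --- the closed-graph argument for separation and the explicit construction of compact open neighbourhoods via compactness of the fibres --- which is perfectly fine and makes the proof self-contained.
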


\begin{proof}
L'espace topologique $\BG$ est profini donc compact et localement profini. Comme le groupe $N_0$ est compact, le quotient $\BG/N_0$ est encore compact d'après \cite[Chapitre III, § 4, Proposition 2, Corollaire 1]{BbkTG14} et il est encore localement profini par passage au quotient dans un espace séparé. Ainsi $\BG/N_0$ est profini.
\end{proof}

\begin{prop} \label{prop:cohom}
Pour tout $r \in \llbrack 0,d \rrbrack$, l'inclusion $I_{r-1} \subset I_r$ induit des injections $\Hc(N_0,I_{r-1}) \hookrightarrow \Hc(N_0,I_r)$.
\end{prop}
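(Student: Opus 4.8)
The goal is to show that the short exact sequence \eqref{filind} splits on cohomology in degree-lowering fashion, i.e.\ that the connecting maps $\Hc[n-1](N_0, \bigoplus_{\ell(w)=r}\Clis_c(N_w(F),U)) \to \Hc[n](N_0,I_{r-1})$ vanish; equivalently that the long exact sequence associated to $0 \to I_{r-1} \to I_r \to \bigoplus_{\ell(w)=r}\Clis_c(N_w(F),U) \to 0$ breaks into short exact sequences. The natural route is to show that the middle term and the quotient term are \emph{$N_0$-acyclic in positive degrees relative to the relevant strata}, so that the only possible failure of injectivity, namely a nonzero connecting map into $\Hc[n](N_0,I_{r-1})$, cannot occur; but in fact what we really need is just the statement in degree-by-degree form, so it suffices to produce, for each stratum, an acyclic resolution compatible with the filtration. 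Concretely, I would first reduce to analyzing $\Hc(N_0, \Clis_c(N_w(F),U))$ and show it embeds into something built from induced modules.

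First I would use Lemma~\ref{lemm:profini}: since $\BG/N_0$ is profini, and since $\Ind U$ consists of functions on $G(F)$ that are locally constant and (by the section $\sigma$) can be described via $\Clis(\BG, U)$-type data, the module $\Ind U$ restricted to $N_0$ decomposes as a direct sum/integral over the fibers of $\BG \to \BG/N_0$. More precisely, pick a continuous section of $\BG \twoheadrightarrow \BG/N_0$ (which exists by the same trivialization argument via Lemma~\ref{lemm:paracpt} used for $\sigma$), and observe that $\Ind U|_{N_0}$ is then a direct sum of modules each of which is $\Clis(N_0', U')$ for appropriate open subgroups $N_0' \subset N_0$ and coefficient modules $U'$ — hence induced, hence $N_0$-acyclic by Lemma~\ref{lemm:inj} and Shapiro. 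The key point is that the \emph{same} decomposition is compatible with the Bruhat filtration, because each $G_r$ is $\BB$-biinvariant, in particular right $N_0$-invariant, so $I_r|_{N_0}$ is the corresponding sub-direct-sum. Therefore $\Hc[n](N_0, I_r) = 0$ for all $n>0$ and all $r$, and in particular the inclusion $I_{r-1}\subset I_r$ induces injections on $\Hc$ trivially in positive degrees (both sides vanish), while in degree $0$ it is the inclusion of $N_0$-invariants of a submodule, hence injective.

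The honest subtlety — and the step I expect to be the main obstacle — is establishing the compatible direct-sum decomposition of $\Ind U|_{N_0}$ rigorously: one must check that the $N_0$-orbits on $\BG$ are open (so the decomposition is a genuine direct sum of lisse representations, not merely a topological one), that on each orbit the module of functions is an induced module for the stabilizer in $N_0$, and that the section $\sigma$ together with the partition of $\BG/N_0$ into disjoint compact opens (Lemma~\ref{lemm:paracpt}) refines things so that support conditions cutting out $I_r$ translate into selecting whole summands. Once this is set up, acyclicity is immediate from Lemma~\ref{lemm:inj}. I would write the decomposition as
\begin{equation*}
\Ind U\big|_{N_0} \cong \bigoplus_{x} \Clis(N_0, U)^{N_{0,x}}\text{-type pieces},
\end{equation*}
indexed by a (finite, by compactness of $\BG/N_0$) set of orbit representatives $x$, with $N_{0,x}$ the stabilizer, each piece isomorphic as an $N_0$-representation to $\Clis(N_0 \times_{N_{0,x}} *, U)$, i.e.\ a smooth induction from an open subgroup, hence acyclic. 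The filtration $I_r|_{N_0}$ is then exactly the subsum over those $x$ whose orbit meets $G_r$, and the inclusions $I_{r-1}\subset I_r$ are inclusions of subsums; passing to $\Hc[0] = (-)^{N_0}$ preserves injectivity of a direct summand inclusion, and in positive degree everything is zero. This proves the proposition.
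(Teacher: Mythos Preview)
Your approach has a genuine gap: the claim that the $N_0$-orbits on $\BG$ are open is false, and consequently the decomposition of $\Ind U|_{N_0}$ as a direct sum of induced-from-open-subgroup modules does not exist. Take $G=\GL_2$, $F=\Qp$, $N_0=\Zp$ acting by translation on $\BG=\mathbb{P}^1(\Qp)$: the point $\infty$ is a fixed orbit that is \emph{not} open, being the limit of the orbits $p^{-n}+\Zp$. Relatedly, your parenthetical ``finite, by compactness of $\BG/N_0$'' is wrong: $\BG/N_0$ is profini but generally infinite (in the example above it is the one-point compactification of a countable discrete set).

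Worse, the conclusion you draw --- that $\Hc[n](N_0,I_r)=0$ for all $n>0$ --- is outright false. In the $\GL_2$ example with $U=\ke$ one has $I_0=\Clis_c(\Qp,\ke)$ (genuinely acyclic) and $I_1/I_0\cong\ke$ with trivial $\Zp$-action, whence the long exact sequence gives $\Hc[1](\Zp,I_1)\cong\Hc[1](\Zp,\ke)\neq 0$. More globally, the whole point of the paper's later computations (Théorème~\ref{theo:Clis}, Corollaire~\ref{coro:HnOrd}) is that $\Hc(N_0,\Ind U)$ is nonzero in many positive degrees; the proposition merely asserts that the Bruhat filtration passes to a \emph{filtration} on these nonzero cohomology groups, not that they vanish.

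The paper's argument is quite different and works cochain-by-cochain rather than by any global splitting. Given a cocycle $\phi:N_0^n\to I_{r-1}$ that becomes a coboundary $\diff\psi$ in $I_r$, one uses that $\im(\phi)$ is finite (local constancy plus compactness of $N_0^n$), so the union $D_1$ of the supports of the $f\in\im(\phi)$ is closed in $G(F)$ and contained in $G_{r-1}$. Using profiniteness of $\BG/N_0$ one separates the images of $D_1$ and $G(F)\setminus G_{r-1}$ by disjoint clopen sets, producing a clopen $(B^-(F),N_0)$-biinvariant $C_1\subset G_{r-1}$ containing $D_1$. Multiplication by its characteristic function defines an $N_0$-equivariant $A$-linear map $\varrho:I_r\to I_{r-1}$ which is the identity on $\im(\phi)$, so $\diff(\varrho\circ\psi)=\varrho\circ\diff\psi=\phi$. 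The key insight you are missing is that one does not need a global $N_0$-equivariant retraction $I_r\to I_{r-1}$ (none exists in general), only one adapted to each individual cocycle.
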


\begin{proof}
Soit $r \in \llbrack 0,d \rrbrack$. Le foncteur des $N_0$-invariants étant exact à gauche, la proposition est trivialement vraie en degré $0$. Soit donc $n\geq1$ entier. L'application $\Hc[n](N_0,I_{r-1}) \to \Hc[n](N_0,I_r)$ est induite par la composition des cochaînes avec l'inclusion $\iota : I_{r-1} \hookrightarrow I_r$. Soit $\phi : N_0^n \to I_{r-1}$ une cochaîne. Supposons que la composée $\iota \circ \phi : N_0^n \to I_r$ soit un cobord, c'est-à-dire qu'il existe une cochaîne $\psi : N_0^{n-1} \to I_r$ telle que $\iota \circ \phi = \diff \psi$. Comme $\im(\phi) \subset I_{r-1}$ on a $\supp(f) \subset G_{r-1}$ pour tout $f \in \im (\phi)$, d'où
\begin{equation*}
D_1 \dfn \bigcup_{f \in \im(\phi)} \supp(f) \subset G_{r-1}.
\end{equation*}
Comme $\phi$ est localement constante sur $N_0^n$ qui est compact, cette réunion est finie. On en déduit que $D_1$ est fermé dans $G(F)$. On note
\begin{equation*}
D_2 \dfn G(F) - G_{r-1}
\end{equation*}
qui est également fermé dans $G(F)$.

Comme $D_1$ et $D_2$ sont $B^-(F)$-invariants par translation à gauche (le support d'un élément de $\Ind U$ l'étant) ce sont des fermés saturés de $G(F)$ par rapport au quotient $\BG$, donc leurs images dans ce dernier sont fermées. De plus l'application quotient $\BG \twoheadrightarrow \BG/N_0$ est fermée par compacité. Ainsi les images respectives $\overline{D}_1$ et $\overline{D}_2$ de $D_1$ et $D_2$ dans le quotient $\BG/N_0$ sont fermées.
Comme $G_{r-1}$ est $(B^-(F),N_0)$-biinvariant, $\overline{D}_1$ et $\overline{D}_2$ sont disjoints (l'un est dans l'image de $G_{r-1}$, l'autre dans l'image de son complémentaire qui est ici le complémentaire de son image). On a donc deux fermés disjoints $\overline{D}_1$ et $\overline{D}_2$ de $\BG/N_0$.

L'espace $\BG/N_0$ étant compact, il est normal (axiome de séparation T4). Il existe donc deux ouverts disjoints contenant $\overline{D}_1$ et $\overline{D}_2$ respectivement. Avec le complémentaire de $\overline{D}_1 \cup \overline{D}_2$, ils forment un recouvrement ouvert de $\BG/N_0$. Ce recouvrement admet un raffinement fini constitué d'ouverts disjoints d'après le lemme \ref{lemm:paracpt}. Soient $\overline{C}_1$ la réunion de ceux qui intersectent $\overline{D}_1$ et $\overline{C}_2$ la réunion de ceux dont l'intersection avec $\overline{D}_1$ est vide. Alors $\overline{D}_1 \subset \overline{C}_1$, $\overline{D}_2 \subset \overline{C}_2$ et $\BG/N_0 = \overline{C}_1 \amalg \overline{C}_2$. En prenant les images réciproques de $\overline{C}_1$ et $\overline{C}_2$ par l'application quotient $G \twoheadrightarrow \BG/N_0$, on trouve une partition de $G(F)$ en deux ouverts $(B^-(F),N_0)$-biinvariants $C_1$ et $C_2$ contenant $D_1$ et $D_2$ respectivement. En particulier $C_1$ est un ouvert fermé de $G(F)$ inclus dans $G_{r-1}$.

La fonction caractéristique $1_{C_1} : G(F) \to A$ de $C_1$ est donc localement constante, $B^-(F)$-invariante par translation à gauche et à support dans $G_{r-1}$. On en déduit que pour tout $f \in I_r$, la fonction $1_{C_1} f$ est bien un élément de $I_{r-1}$ et on définit ainsi une application $A$-linéaire $\varrho : I_r \to I_{r-1}$. De plus $1_{C_1}$ est $N_0$-invariante par translation à droite, donc l'application $\varrho$ est aussi $N_0$-équivariante. On en déduit que $\diff(\varrho \circ \psi) = \varrho \circ (\diff \psi) = \varrho \circ \iota \circ \phi = \phi$, la dernière égalité résultant du fait que $1_{C_1}$ vaut $1$ sur le support de tout élément de $\im(\phi)$. Ainsi $\phi$ est un cobord (celui de la cochaîne $\varrho \circ \psi : N_0^{n-1} \to I_{r-1}$).
\end{proof}

Soit $r \in \llbrack 0,d \rrbrack$. La proposition \ref{prop:cohom} montre que les morphismes connectants de la longue suite exacte de cohomologie obtenue à partir de la suite exacte courte \eqref{filind} sont tous nuls.

\subsubsection*{Conclusion}

La filtration de Bruhat induit une filtration $(\Hc(N_0,I_r))_{r \in \llbrack -1,d \rrbrack}$ des $A$-modules $\Hc(N_0,\Ind U)$ et pour tout $r \in \llbrack 0,d \rrbrack$, on a des suites exactes courtes de $A$-modules
\begin{equation} \label{filcohom}
0 \to \Hc(N_0,I_{r-1}) \to \Hc(N_0,I_r) \to \bigoplus_{\ell(w)=r} \Hc(N_0,\Clis_c(N_w(F),U)) \to 0.
\end{equation}

\subsection{Variante pour les induites paraboliques} \label{sec:para}

Soient $P \subset G$ un sous-groupe parabolique standard (c'est-à-dire contenant $B$) et $L \subset P$ le sous-groupe de Levi standard (c'est-à-dire contenant $T$). On note $P^- \subset G$ le sous-groupe parabolique standard opposé à $P$ par rapport à $L$. Il est caractérisé par l'égalité $L = P \cap P^-$ et $L \subset P^-$ est le sous-groupe de Levi standard. On note $W_L \subset W$ le groupe de Weyl de $(L,T)$ et $w_{L,0} \in W_L$ l'élément de longueur maximale. On a
\begin{equation*}
P=B\dot{W}_LB \quad \text{et} \quad P^- = B^-\dot{W}_LB^-.
\end{equation*}
On adapte les résultats des sous-sections précédentes aux induites paraboliques.

\subsubsection*{Décomposition de Bruhat généralisée}

On considère les doubles classes $P^- \w B$ avec $w \in W$ ; il faut encore adapter les résultats classiques (qui concernent les doubles classes $P^- \w B^-$).
Soit $w \in W$. D'après \cite[Proposition 3.9]{BTC}, il existe un unique élément $w_{\min}$ de longueur minimale dans $W_Lw$ et il est caractérisé par le fait que $\ell(w_Lw_{\min}) = \ell(w_{\min}) + \ell(w_L)$ pour tout $w_L \in W_L$. On en déduit que $w_{L,0} w_{\min}$ est l'unique élément de longueur maximale dans $W_Lw$ et il est caractérisé par le fait que $\ell(w_L\wP) = \ell(\wP) - \ell(w_L)$ pour tout $w_L \in W_L$. On définit
\begin{equation*}
\WP \dfn \{\text{$w \in W$ | $w$ de longueur maximale dans $W_Lw$}\}.
\end{equation*}

\begin{lemm} \label{lemm:P1}
Soit $\wP \in \WP$. On a la décomposition
\begin{equation*}
P^-(F) \wPd B(F) = \coprod_{w_L \in W_L} B^-(F) \w_L \wPd B(F).
\end{equation*}
\end{lemm}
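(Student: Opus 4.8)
The plan is to twist on the right by $\w_0$ so as to replace the ``mixed'' double coset $P^-(F)\wPd B(F)$, as well as the cells $C(w)=B^-(F)\w B(F)$, by genuine double cosets for the $BN$-pair $\bigl(B^-(F),N(F)\bigr)$ — this is precisely the device $B^-\w B=B^-\w\w_0 B^-\w_0$ pointed out at the beginning of this subsection. Since $B^-=w_0 B w_0^{-1}$ and $T(F)$ normalizes $B^-(F)$, one has $B(F)=\w_0^{-1}B^-(F)\w_0$, hence, setting $\dot v\dfn\wPd\w_0^{-1}$ (a representative of $v\dfn\wP w_0$),
\begin{align*}
P^-(F)\wPd B(F)&=\bigl(P^-(F)\,\dot v\,B^-(F)\bigr)\w_0,\\
B^-(F)\w_L\wPd B(F)&=\bigl(B^-(F)\,\w_L\dot v\,B^-(F)\bigr)\w_0\qquad(w_L\in W_L).
\end{align*}
The $T(F)$-indeterminacy of the various representatives is irrelevant, since $T(F)\subset B^-(F)$, so each $B^-(F)\,\dot w\,B^-(F)$ depends only on $w\in W$. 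As right translation by $\w_0$ is a bijection of $G(F)$ that preserves disjointness, the lemma becomes equivalent to
\begin{equation*}
P^-(F)\,\dot v\,B^-(F)=\coprod_{w_L\in W_L}B^-(F)\,\w_L\dot v\,B^-(F).\tag{$\star$}
\end{equation*}

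To prove $(\star)$ I would first observe that $v=\wP w_0$ is the element of minimal length in its coset $W_L v$, and more precisely that $\ell(w_L v)=\ell(w_L)+\ell(v)$ for every $w_L\in W_L$: using $\ell(u w_0)=d-\ell(u)$ for all $u\in W$ together with the characterization $\ell(w_L\wP)=\ell(\wP)-\ell(w_L)$ of the elements of $\WP$, one computes
\begin{equation*}
\ell(w_L v)=\ell(w_L\wP w_0)=d-\ell(w_L\wP)=d-\ell(\wP)+\ell(w_L)=\ell(\wP w_0)+\ell(w_L)=\ell(v)+\ell(w_L).
\end{equation*}
Next, the double cosets $B^-(F)\,\dot w\,B^-(F)$, $w\in W$, partition $G(F)$ — this is the usual Bruhat decomposition relative to $B^-(F)$, which one also recovers from $G(F)=\coprod_w C(w)$ by right translation by $\w_0$ — so the relation $P^-=B^-\dot W_L B^-$ gives the disjoint decomposition $P^-(F)=\coprod_{w_L\in W_L}B^-(F)\,\w_L\,B^-(F)$. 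Finally, the additivity of lengths just established allows one to apply the standard multiplication rule for Bruhat cells, namely $B^-(F)\,\dot w_1\,B^-(F)\cdot B^-(F)\,\dot w_2\,B^-(F)=B^-(F)\,\dot{(w_1 w_2)}\,B^-(F)$ whenever $\ell(w_1 w_2)=\ell(w_1)+\ell(w_2)$, with $w_1=w_L$ and $w_2=v$. Putting the three points together,
\begin{equation*}
P^-(F)\,\dot v\,B^-(F)=\bigcup_{w_L\in W_L}B^-(F)\,\w_L\,B^-(F)\,\dot v\,B^-(F)=\bigcup_{w_L\in W_L}B^-(F)\,\dot{(w_L v)}\,B^-(F),
\end{equation*}
and since $w_L\mapsto w_L v$ is injective the cells on the right are pairwise distinct, hence disjoint; this establishes $(\star)$, and untwisting yields the lemma.

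The argument is essentially bookkeeping, so I do not anticipate a genuine obstacle. The only points requiring a little care are the tracking of the $T(F)$-ambiguity of the representatives $\w$, $\w_L$, $\wPd$ — harmless because $T(F)\subset B(F)\cap B^-(F)$ — and the correct invocation of the two classical facts about the $BN$-pair $\bigl(B^-(F),N(F)\bigr)$, the disjointness of distinct Bruhat cells and the cell-multiplication rule in the length-additive case. Conceptually, the whole content is that the maximality of $\wP$ in $W_L\wP$ becomes, after the $\w_0$-twist, the minimality of $v=\wP w_0$ in $W_L v$, i.e. the additivity $\ell(w_L v)=\ell(w_L)+\ell(v)$, which is exactly the situation in which a product of two Bruhat double cosets is again a single double coset.
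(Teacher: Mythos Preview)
Your proof is correct and follows essentially the same approach as the paper: twist on the right by $\w_0$ to pass to $B^-$-double cosets, compute $\ell(w_L\wP w_0)=\ell(w_L)+\ell(\wP w_0)$ from the defining property of $\WP$, apply the length-additive cell multiplication rule (the paper cites \cite[Lemme 3.4]{BTC}) to each term of $P^-(F)=\coprod_{w_L}B^-(F)\w_L B^-(F)$, and untwist. The only cosmetic difference is that you make the disjointness argument explicit via the injectivity of $w_L\mapsto w_L v$, whereas the paper leaves it implicit in the Bruhat decomposition.
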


\begin{proof}
Par définition de $P^-$, on a
\begin{equation*}
P^-(F)\wPd B(F) = \left( \coprod_{w_L \in W_L} B^-(F) \w_L B^-(F) \right) \left(B^-(F)\wPd \w_0B^-(F) \right) \w_0.
\end{equation*}
Pour tout $w_L \in W_L$, on a
\begin{align*}
\ell(w_L \wP w_0) &= \ell(w_0) - \ell(w_L\wP) \\
&= \ell(w_0) - \ell(\wP) + \ell(w_L) \\
&= \ell(w_L) + \ell(\wP w_0).
\end{align*}
Donc d'après \cite[Lemme 3.4]{BTC}, on a
\begin{equation*}
\left( B^-(F) \w_L B^-(F) \right) \left( B^-(F) \wPd \w_0 B^-(F) \right) = B^-(F) \w_L \wPd \w_0 B^-(F)
\end{equation*}
et on en déduit le lemme en translatant à droite par $\w_0$.
\end{proof}

\begin{lemm} \label{lemm:P2}
Soit $\wP \in \WP$. La projection $\BG \twoheadrightarrow P^-(F) \backslash G(F)$ induit un homéomorphisme
\begin{equation*}
B^-(F) \backslash B^-(F) \wPd B(F) \iso P^-(F) \backslash P^-(F) \wPd B(F).
\end{equation*}
\end{lemm}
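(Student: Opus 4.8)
Le plan est de montrer que l'application de l'énoncé est une bijection continue qui, après identification des deux espaces de doubles classes à $N_{\wP}(F)$, devient l'identité. On commencerait par utiliser l'homéomorphisme $B^-(F)\times\{\wPd\}\times N_{\wP}(F)\iso C(\wP)$ rappelé plus haut (avec $N_{\wP}=N\cap(\wPd^{-1}N\wPd)$) pour écrire $C(\wP)=B^-(F)\wPd N_{\wP}(F)$, d'où un homéomorphisme $B^-(F)$-équivariant $B^-(F)\backslash C(\wP)\cong N_{\wP}(F)$. Comme $B^-(F)\subset P^-(F)$, on a $P^-(F)\wPd B(F)=P^-(F)\cdot C(\wP)=P^-(F)\wPd N_{\wP}(F)$, de sorte que l'application de l'énoncé — induite par la projection $G(F)\twoheadrightarrow P^-(F)\backslash G(F)$, donc continue — est surjective. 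Tout se ramènerait alors à identifier $P^-(F)\backslash P^-(F)\wPd N_{\wP}(F)$ à $N_{\wP}(F)$ de façon compatible.

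Le point clé, et l'endroit où l'hypothèse $\wP\in\WP$ sert vraiment, serait l'inclusion $\wPd N_{\wP}(F)\wPd^{-1}\subset N_P(F)$, où $N_P\subset N$ désigne le radical unipotent de $P$. On la démontrerait par un calcul de racines : $N_{\wP}$ a pour ensemble de racines $\Phi^+\cap\wP^{-1}(\Phi^+)$, donc $\wP N_{\wP}\wP^{-1}$ a pour racines $\wP(\Phi^+)\cap\Phi^+$ ; comme $\wP$ est de longueur maximale dans $W_L\wP$, on a $\wP^{-1}(\alpha)\in\Phi^-$ pour toute racine simple $\alpha$ de $L$ (car $\ell(s_\alpha\wP)<\ell(\wP)$), d'où $\wP^{-1}(\Phi_L^+)\subset\Phi^-$ (en notant $\Phi_L^+$ les racines positives de $L$) puis $\wP(\Phi^+)\cap\Phi_L^+=\emptyset$ ; les racines de $\wP N_{\wP}\wP^{-1}$ seraient donc contenues dans $\Phi^+\setminus\Phi_L^+$, qui est l'ensemble des racines de $N_P$.

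Cela acquis, $P^-(F)\wPd N_{\wP}(F)=\bigl(P^-(F)\cdot\wPd N_{\wP}(F)\wPd^{-1}\bigr)\wPd$ serait contenu dans le translaté à droite par $\wPd$ de la grosse cellule $P^-(F)N_P(F)$ relative à $P$, sur laquelle le produit induit un homéomorphisme $P^-(F)\times N_P(F)\iso P^-(F)N_P(F)$ (résultat classique, voir \cite{Jan}). En le restreignant au sous-groupe fermé $\wPd N_{\wP}(F)\wPd^{-1}$ de $N_P(F)$ et en conjuguant par $\wPd$, on obtiendrait un homéomorphisme $P^-(F)$-équivariant $P^-(F)\times N_{\wP}(F)\iso P^-(F)\wPd N_{\wP}(F)$, $(p,n)\mapsto p\wPd n$, puis, en passant aux orbites sous $P^-(F)$, l'identification $P^-(F)\backslash P^-(F)\wPd N_{\wP}(F)\cong N_{\wP}(F)$. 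Modulo celle-ci et celle du premier paragraphe, l'application de l'énoncé enverrait, pour $n\in N_{\wP}(F)$, la classe de $\wPd n$ sur la classe de $\wPd n$ : elle deviendrait l'identité de $N_{\wP}(F)$, ce qui conclurait.

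La principale difficulté est donc l'inclusion de racines $\wPd N_{\wP}(F)\wPd^{-1}\subset N_P(F)$, qui ramène à la décomposition en grosse cellule ; le reste consiste en des vérifications de routine (compatibilité des topologies via le passage aux $F$-points d'isomorphismes de variétés lisses). On pourrait d'ailleurs établir l'injectivité directement : si $b^-\wPd n$ et $b'^-\wPd n'$ (avec $b^-,b'^-\in B^-(F)$, $n,n'\in N_{\wP}(F)$) sont dans la même orbite sous $P^-(F)$, alors $\wPd(n'n^{-1})\wPd^{-1}$ appartient à la fois à $N(F)\cap P^-(F)$, qui est le radical unipotent de $B\cap L$ (car $B\cap P^-=B\cap L$), et à $\wPd N_{\wP}(F)\wPd^{-1}$, et le même calcul $\wP(\Phi^+)\cap\Phi_L^+=\emptyset$ force cet élément à être trivial, donc $n=n'$ et les deux éléments sont dans la même orbite sous $B^-(F)$.
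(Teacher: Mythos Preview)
Ta preuve est correcte et différente de celle de l'article. Le papier procède en deux lignes : il observe que $\wP w_0$ est de longueur minimale dans $W_L\wP w_0$ (ce qui ressort de la démonstration du lemme précédent) puis invoque \cite[Proposition~3.16]{BTC} pour obtenir l'homéomorphisme avec $B^-(F)\wPd\w_0 B^-(F)$ et $P^-(F)\wPd\w_0 B^-(F)$ en place de $B^-(F)\wPd B(F)$ et $P^-(F)\wPd B(F)$, et conclut en translatant à droite par $\w_0$. Ton approche, elle, explicite le contenu de cette proposition citée : tu paramètres les deux quotients par $N_{\wP}(F)$ via l'inclusion clé $\wPd N_{\wP}\wPd^{-1}\subset N_P$ (obtenue par le calcul de racines $\wP^{-1}(\Phi_L^+)\subset\Phi^-$, qui est bien la traduction radicielle de la maximalité de $\wP$ dans $W_L\wP$) et la grosse cellule $P^-\times N_P\iso P^-N_P$. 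L'avantage de ta version est qu'elle est autonome et rend l'homéomorphisme transparent ; celui de la version de l'article est sa concision, au prix de la référence externe. Une remarque mineure : ta déduction \og $\wP^{-1}(\alpha)\in\Phi^-$ pour tout $\alpha\in\Delta_L$ $\Rightarrow$ $\wP^{-1}(\Phi_L^+)\subset\Phi^-$\fg{} est correcte mais mérite une phrase (toute $\beta\in\Phi_L^+$ est combinaison à coefficients $\geq 0$ des éléments de $\Delta_L$, donc $\wP^{-1}(\beta)$ est dans le cône négatif engendré par $\Delta$).
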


\begin{proof}
La preuve du lemme précédent montre que $\wP w_0$ est de longueur minimale dans $W_L \wP w_0$, donc d'après \cite[Proposition 3.16]{BTC} la projection $\BG \twoheadrightarrow P^-(F) \backslash G(F)$ induit un homéomorphisme
\begin{equation*}
B^-(F) \backslash B^-(F) \wPd \w_0 B^-(F) \iso P^-(F) \backslash P^-(F) \wPd \w_0 B^-(F)
\end{equation*}
et on en déduit le lemme en translatant à droite par $\w_0$.
\end{proof}

Pour tout $\wP \in \WP$, on définit la cellule correspondante en posant
\begin{equation*}
C_P(\wP) \dfn P^-(F) \wPd B(F)
\end{equation*}
et en utilisant le lemme \ref{lemm:P1} on voit que l'on a
\begin{equation*}
\overline{C_P(\wP)} = \coprod_{\wP' \geq \wP} C_P(\wP').
\end{equation*}
Pour tout $r \in \llbrack -1,d \rrbrack$, on pose
\begin{equation*}
G_r^P \dfn \bigcup_{\ell(\wP)=r} G_{\wP} = \coprod_{\ell(\wP) \leq r} C_P(\wP),
\end{equation*}
l'égalité résultant de la décomposition de Bruhat et du lemme \ref{lemm:P1}.

\subsubsection*{Filtration et gradué}

Soit $U$ une représentation lisse de $L(F)$ sur $A$. Par inflation et induction parabolique, on obtient une représentations lisse $\IndP U$ de $G(F)$ sur $A$ (voir \cite[§ 4.1]{Em1}). Pour tout sous-ensemble $(P^-(F),B(F))$-biinvariant $C$ de $G(F)$, on définit une sous-$B(F)$-représentation de $\IndP U$ en posant
\begin{equation*}
\left(\IndP U\right)(C) \dfn \left\{f \in \IndP U ~|~ \supp(f) \subset C\right\} \\
\end{equation*}
et on définit une représentation lisse de $B(F)$ sur $A$ en faisant agir $B(F)$ par translation à droite sur le $A$-module
\begin{multline*}
\C^P_C \dfn \{\text{$f : C \to U$ localement constante à support compact} \\
\text{modulo $P^-(F)$ | $f(pg) = p \cdot f(g)$ pour tous $p \in P^-(F), g \in C$}\}.
\end{multline*}
Enfin pour tout $r \in \llbrack -1,d \rrbrack$, on pose
\begin{equation*}
I_r^P \dfn \left(\IndP U\right)\left(G_r^P\right).
\end{equation*}

Avec ces définitions, le lemme \ref{lemm:index} et la proposition \ref{prop:index} ainsi que leurs démonstrations sont vrais tels quels avec $P^-$ et $\C^P$ au lieu de $B^-$ et $\C$. Pour tout $r \in \llbrack 0,d \rrbrack$, on obtient donc une suite exacte courte de représentations lisses de $B(F)$ sur $A$
\begin{equation*}
0 \to I^P_{r-1} \to I^P_r \to \bigoplus_{\ell(\wP)=r} \C^P_{C_P(\wP)} \to 0.
\end{equation*}
De même, le lemme \ref{lemm:profini} et la proposition \ref{prop:cohom} ainsi que leurs démonstrations sont vrais tels quels avec $P^-$ et $I^P$ au lieu de $B^-$ et $I$. Pour tout $r \in \llbrack 0,d \rrbrack$, on obtient donc des suites exactes courtes de $A$-modules
\begin{equation*}
0 \to \Hc(N_0,I^P_{r-1}) \to \Hc(N_0,I^P_r) \to \bigoplus_{\ell(\wP)=r} \Hc(N_0,\C^P_{C_P(\wP)}) \to 0.
\end{equation*}

On fixe un caractère algébrique de $G$ que l'on note\footnote{Cette notation est justifiée par le fait qu'un tel caractère est toujours le déterminant d'une représentation algébrique fidèle de $G$.} $\detfr$.
On se restreint au cas où $U = \eta \circ \detfr$ avec $\eta : F^\times \to A^\times$ un caractère lisse. On compare les gradués des filtrations de $\IndP \eta \circ \detfr$ et $\Ind \eta \circ \detfr$.

\begin{lemm}
Soit $\wP \in \WP$. On a un isomorphisme $B(F)$-équivariant
\begin{equation*}
\C^P_{C_P(\wP)} \cong \C_{C(\wP)}.
\end{equation*}
\end{lemm}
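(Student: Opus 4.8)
The plan is to show that restriction of functions to $C(\wP)$ furnishes the asserted isomorphism. Two facts make this work. First, since $\detfr$ is a character of $G$ it is trivial on the unipotent radicals of $P^-$ and of $B^-$, so the action of $P^-(F)$ on $U = \eta \circ \detfr$ (seen as a character of $L$ inflated to $P^-(F)$) and the action of $B^-(F)$ on $U$ (seen as a character of $T$ inflated to $B^-(F)$) are both given by the single character $\eta \circ \detfr$ restricted from $G(F)$; in particular the two coefficient modules are canonically identified as $B^-(F)$-modules. Second, the homeomorphism of Lemma \ref{lemm:P2} lets one transport the support and topological conditions between $B^-(F) \backslash C(\wP)$ and $P^-(F) \backslash C_P(\wP)$.

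First I would record the relevant geometry. By Lemma \ref{lemm:P1}, $C_P(\wP) = \coprod_{w_L \in W_L} C(w_L \wP)$; since $\wPd B(F) \subset B^-(F) \wPd B(F) = C(\wP)$ this gives $C_P(\wP) = P^-(F)\, C(\wP)$, and moreover $C(\wP)$ is closed in $C_P(\wP)$, because $\overline{C(\wP)} = \coprod_{w' \geq \wP} C(w')$ while among the elements $w_L \wP$ only $w_L = 1$ can satisfy $w_L \wP \geq \wP$ (the Bruhat order respects length and $\ell(w_L \wP) = \ell(\wP) - \ell(w_L)$).

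Next I would check that $f \mapsto f|_{C(\wP)}$ maps $\C^P_{C_P(\wP)}$ into $\C_{C(\wP)}$ and is injective. The restricted function is locally constant and $B^-(F)$-equivariant for the character $\eta \circ \detfr$; it is compactly supported modulo $B^-(F)$ since $\supp(f)$ is $P^-(F)$-stable (the action on $U$ being by units), hence $\supp(f) = P^-(F)(\supp(f) \cap C(\wP))$ has the same image in $P^-(F) \backslash C_P(\wP)$ as $\supp(f|_{C(\wP)})$, and Lemma \ref{lemm:P2} identifies that image with the image of $\supp(f|_{C(\wP)})$ in $B^-(F) \backslash C(\wP)$. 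Injectivity is then immediate: if $f|_{C(\wP)} = 0$ then $f = 0$ on $P^-(F)\, C(\wP) = C_P(\wP)$ by $P^-(F)$-equivariance.

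For surjectivity, given $g \in \C_{C(\wP)}$ I would set $\tilde g(py) := (\eta \circ \detfr)(p) \cdot g(y)$ for $p \in P^-(F)$, $y \in C(\wP)$. This is well-defined: if $p_1 y_1 = p_2 y_2$ then $q := p_2^{-1} p_1 \in P^-(F)$ carries $y_1 \in C(\wP)$ to $y_2 \in C(\wP)$ inside the same $P^-(F)$-orbit, so the injectivity in Lemma \ref{lemm:P2} forces $q \in B^-(F)$ with $y_2 = q y_1$, and the two prescribed values then agree because $\detfr$ is a character of $G$ and $g$ is $B^-(F)$-equivariant. Restricting the section $\sigma$ of $\pi$ to $B^-(F) \backslash C(\wP) \subset \BG$ and composing with the homeomorphism of Lemma \ref{lemm:P2} yields a continuous section $\tau \colon P^-(F) \backslash C_P(\wP) \to C(\wP)$, in terms of which $\tilde g(x) = (\eta \circ \detfr)(x) \cdot \big[ (\eta \circ \detfr)(\tau(\bar x))^{-1} g(\tau(\bar x)) \big]$ with $\bar x$ the class of $x$; the bracket is a locally constant function of $\bar x$ pulled back to $C_P(\wP)$, and $(\eta \circ \detfr)(x)$ is locally constant on $G(F)$, so $\tilde g$ is locally constant. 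By construction $\tilde g$ is $P^-(F)$-equivariant, restricts to $g$ on $C(\wP)$, and $\supp(\tilde g) = P^-(F)\,\supp(g)$ has compact image in $P^-(F) \backslash C_P(\wP)$, so $\tilde g \in \C^P_{C_P(\wP)}$; hence restriction is surjective. Both restriction and its inverse commute with right translation by $B(F)$ since $C(\wP)$ is right $B(F)$-stable, so the isomorphism is $B(F)$-equivariant. The only genuine work is the surjectivity step — exhibiting the section $\tau$ and verifying that $\tilde g$ stays locally constant and compactly supported modulo $P^-(F)$; the rest is formal once the compatibility of the $P^-(F)$- and $B^-(F)$-actions on $\eta \circ \detfr$ is observed.
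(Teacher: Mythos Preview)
Your proof is correct and follows the same approach as the paper: the isomorphism is restriction to $C(\wP)$, with the key observations being that $\detfr$ is trivial on unipotent radicals (so the $B^-(F)$- and $P^-(F)$-equivariance conditions agree via $\eta\circ\detfr$) and that Lemma~\ref{lemm:P2} controls the topology. The paper's own argument is terser on bijectivity, simply invoking Lemma~\ref{lemm:P2} again, whereas you spell out the inverse map $\tilde g$ and verify its properties explicitly; your extra detail is sound and makes the role of the continuous section more transparent.
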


\begin{proof}
On vérifie que la restriction à $B^-(F) \wPd B(F)$ définit bien une application $B(F)$-équivariante
\begin{equation*}
\varrho : \C^P_{C_P(\wP)} \to \C_{C(\wP)}.
\end{equation*}
Soit $f \in \C^P_{C_P(\wP)}$. Alors $\varrho(f)$ est localement constante et à support compact modulo $B^-(F)$ d'après le lemme \ref{lemm:P2}. Comme $\detfr$ est un caractère algébrique, il est trivial sur tout sous-groupe unipotent de $G$. Ainsi pour tout $b \in B^-(F) \subset P^-(F)$, on a $\detfr b=\detfr t = \detfr l$ avec $t$ et $l$ les images de $b$ dans $T(F)$ et $L(F)$ respectivement. On en déduit que $\varrho(f)$ vérifie bien $\varrho(f)(bg) = \eta(\detfr b) \varrho(f)(g)$ pour tous $b \in B^-(F), g \in C(\wP)$, d'où $\varrho(f) \in \C_{C(\wP)}$.
L'application $\varrho$ ainsi définie est visiblement $B(F)$-équivariante et en utilisant encore le lemme \ref{lemm:P2} on voit qu'elle est bijective.
\end{proof}

On peut donc utiliser l'isomorphisme \eqref{isoClis} pour expliciter le gradué de la filtration de $\IndP \eta \circ \detfr$.

\subsubsection*{Conclusion}

On a défini une filtration $(I^P_r)_{r \in \llbrack -1,d \rrbrack}$ de $\IndP \eta \circ \detfr$ et pour tout $r \in \llbrack 0,d \rrbrack$, on a une suite exacte courte de représentations lisses de $B(F)$ sur $A$
\begin{equation} \label{filindP}
0 \to I^P_{r-1} \to I^P_r \to \bigoplus_{\ell(\wP)=r} \Clis_c(N_{\wP}(F),\eta \circ \detfr) \to 0
\end{equation}
qui induit des suites exactes courtes de $A$-modules en cohomologie
\begin{equation} \label{filcohomP}
0 \to \Hc(N_0,I^P_{r-1}) \to \Hc(N_0,I^P_r) \to \bigoplus_{\ell(\wP)=r} \Hc(N_0,\Clis_c(N_{\wP}(F),\eta \circ \detfr)) \to 0.
\end{equation}

\section{Cohomologie et action de Hecke} \label{sec:3}

Soit $N_0$ un sous-groupe ouvert compact standard de $N(F)$ compatible avec la décomposition radicielle (voir l'appendice \ref{app:Lie}). Nous définissons une action d'un sous-monoïde $T^+$ de $T(F)$ sur les $A$-modules de cohomologie de certains sous-groupes fermés $\Nz$ de $N_0$. Puis, nous étudions ces $A$-modules et cette action à travers un dévissage de $\Nz$. Enfin, nous utilisons des dévissages successifs de $N_0$ pour calculer sa cohomologie à valeurs dans le gradué de la filtration de Bruhat d'une représentation induite de $G(F)$ sur $A$, ainsi que l'action de $T^+$ sur ces $A$-modules.

\subsection{Définition et premières propriétés}

On définit un sous-monoïde $T^+ \subset T(F)$ et une action de $T^+$ sur les $A$-modules de cohomologie de certains sous-groupes compacts $\Nz \subset N_0$. On vérifie que ces définitions généralisent celles de \cite{Em1,Em2} et on calcule la cohomologie de $\Nz$ et l'action de $T^+$ en degré maximal.

\subsubsection*{Préliminaires}

Soient $H$ un groupe de Lie $p$-adique et $H^+$ un sous-monoïde de $H$. On suppose que $H^+$ contient un sous-groupe ouvert $H_0$ de $H$. On dit qu'une représentation $V$ de $H^+$ sur $A$ est lisse si l'action de $H_0$ sur $V$ est lisse.

\begin{lemm} \label{lemm:mnd}
La sous-catégorie pleine des représentations lisses de $H^+$ sur $A$ est abélienne. Elle possède suffisamment d'injectifs et ces derniers sont encore injectifs dans la catégorie des représentations lisses de $H_0$ sur $A$.
\end{lemm}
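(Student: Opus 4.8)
The plan is to reduce everything to the well-understood category of smooth representations of the open subgroup $H_0$ over $A$, which is already known to be abelian with enough injectives (it coincides with the category of discrete modules over a suitable completed group algebra, or one simply invokes \cite[§ 2.2]{Em2}). First I would check that the full subcategory of smooth representations of $H^+$ on $A$ is abelian: since abelian-ness of a full subcategory is a matter of kernels and cokernels, and these are computed as $A$-modules, the only point is that the kernel and cokernel of an $H^+$-equivariant morphism between smooth $H^+$-representations are again $H^+$-stable and the restricted $H_0$-action on them is smooth — the former is formal, the latter because a subquotient of a smooth $H_0$-representation is smooth. So the full subcategory is stable under all (co)kernels and is abelian, and the forgetful functor to smooth $H_0$-representations is exact.

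Next, for enough injectives, I would imitate the construction recalled just before Lemma \ref{lemm:inj} for compact groups and adapt it to the monoid setting. Given a smooth representation $V$ of $H^+$ on $A$, the natural candidate is the co-induced object $\Clis(H^+ , V)$ — the $A$-module of locally constant functions $f : H^+ \to V$ on which $H^+$ acts by right translation $(h' f)(h) = f(h h')$ — together with the $H^+$-equivariant embedding $V \hookrightarrow \Clis(H^+, V)$, $v \mapsto (h \mapsto h \cdot v)$. (Here "locally constant" means constant on cosets of some open subgroup of $H_0$, so the $H_0$-action by right translation is smooth, hence this is genuinely an object of our category; one should be slightly careful that $H^+$ need not be a group, so one works with functions that are locally constant for the uniform structure coming from $H_0$.) One then shows this object is injective in the category of smooth $H^+$-representations by the usual adjunction/extension argument: a morphism into $\Clis(H^+,V)$ from a subobject extends because it is determined by a map into $V$ (evaluation at $1$) composed with the $H^+$-action, and such an $A$-linear, $H_0$-smooth map extends since... here one must be a little careful — the cleanest route is to observe that $\Clis(H^+,V)$ restricted to $H_0$ is a direct summand, or at least receives a splitting, of $\Clis(H_0,V)$-type objects, reducing injectivity to Lemma \ref{lemm:inj}. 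Embedding an arbitrary $V$ into such an injective then gives enough injectives.

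For the last assertion — that an injective object $I$ of the $H^+$-category is still injective as a smooth $H_0$-representation — I would use the standard restriction-of-injectives criterion: the restriction functor from smooth $H^+$-representations to smooth $H_0$-representations has an exact left adjoint (roughly $A[H^+] \otimes_{A[H_0]} -$, the "smooth induction" from the open subgroup to the monoid, which is exact because $H_0$ is open in $H^+$ so $A[H^+]$ is free, hence flat, as a right $A[H_0]$-module), and a right adjoint to an exact functor preserves injectives. Concretely one checks $\Hom_{H_0}(W, I) = \Hom_{H^+}(A[H^+]\otimes_{A[H_0]} W, I)$ functorially in the smooth $H_0$-representation $W$, and since the left-hand argument-producing functor is exact, $\Hom_{H_0}(-, I)$ is exact, i.e. $I$ is $H_0$-injective.

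The main obstacle I anticipate is not any one of these steps in isolation but the bookkeeping forced by the fact that $H^+$ is only a monoid, not a group: one cannot freely use inverses, so the "induced module is acyclic" shortcut (Shapiro's lemma, as in the proof of Lemma \ref{lemm:inj}) and the flatness statement $A[H^+]$ free over $A[H_0]$ both require a genuine coset decomposition $H^+ = \coprod_i H_0 h_i$ (or $\coprod_i h_i H_0$), whose existence and compatibility with smoothness must be verified from the hypothesis that $H_0$ is open in $H$ and contained in $H^+$. Once that decomposition is in hand — it should follow because $H_0$ open in the Lie group $H$ forces $[H:H_0]$-type cosets to be nicely behaved and $H^+$ is a union of such cosets — the rest is the routine adaptation of the compact-group arguments already used in the paper.
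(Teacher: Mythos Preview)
Your overall strategy matches the standard argument that the paper simply cites from \cite{Em1,Em2}: abelianness is checked on underlying $A$-modules, enough injectives comes from a Frobenius-reciprocity adjunction, and preservation of injectives under restriction to $H_0$ follows because restriction has an exact left adjoint. Your treatment of the first and third points is fine; in particular the coset decomposition $H^+ = \coprod_i h_i H_0$ you worry about at the end holds for the trivial reason that $H^+ H_0 = H^+$ (since $H_0 \subset H^+$ and $H^+$ is a monoid) while distinct left $H_0$-cosets in the ambient group $H$ are disjoint, so $A[H^+]$ is free as a right $A[H_0]$-module. Smoothness of $A[H^+]\otimes_{A[H_0]}W$ then follows because $H_0 \cap h_i H_0 h_i^{-1}$ is open in $H_0$ for each $h_i$.

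There is, however, a genuine gap in your construction of enough injectives. The adjunction you are reaching for, $\Hom_{H^+}(W,\Clis(H^+,V)) \cong \Hom_A(W,V)$ via evaluation at $1$, is correct, but it shows that $\Clis(H^+,V)$ is injective in smooth $H^+$-representations \emph{only when $V$ is injective as an $A$-module}. For an arbitrary smooth $V$ this fails, and your proposed rescue via Lemma~\ref{lemm:inj} does not work: that lemma asserts only that $\Clis(H_0,V)$ is $H_0$-\emph{acyclic}, not injective, and acyclicity of a restriction says nothing about injectivity in the $H^+$-category. The fix is the standard one (and is what \cite[Proposition~2.2.1]{Em2} does): first choose an $A$-linear embedding of $V$ into an injective $A$-module $I$, and then embed $V$ into the genuinely injective object $\Clis(H^+,I)$ via $v \mapsto (h \mapsto \iota(hv))$, where $\iota : V \hookrightarrow I$ is the chosen embedding; evaluation at $1 \in H^+$ shows this is injective.
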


\begin{proof}
C'est une adaptation immédiate de \cite[Lemme 2.2.6]{Em1}, \cite[Proposition 2.2.1]{Em2} et \cite[Proposition 2.2.2]{Em2}.
\end{proof}

Soit $T_0 = \ker \nu$ avec $\nu$ défini par \eqref{nu} ci-dessous. C'est un sous-groupe ouvert de $T(F)$ (car la valuation de $F$ est discrète). On définit un sous-monoïde de $T(F)$ en posant
\begin{equation*}
T^+ \dfn \{t \in T(F) ~|~ \forall \alpha \in \Delta, \val (\alpha(t)) \geq 0\}.
\end{equation*}
On a donc $T_0 \subset T^+$.
On vérifie que cette définition de $T^+$ est identique à celle de \cite{Em1,Em2} et on montre l'existence de certains éléments particuliers de $T^+$.

\begin{lemm} \label{lemm:Tp}
Soit $t \in T(F)$. Alors $t \in T^+$ si et seulement si $t N_0 t^{-1} \subset N_0$.
\end{lemm}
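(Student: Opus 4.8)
The statement to prove is: for $t \in T(F)$, one has $t \in T^+$ if and only if $t N_0 t^{-1} \subset N_0$. The plan is to reduce everything to the root subgroups, using the fact that $N_0$ is a \emph{standard} compact open subgroup compatible with the root decomposition (as recalled in the appendix): this means $N_0$ decomposes as a product $\prod_{\alpha \in \Phi^+} N_{\alpha,0}$ of compact open subgroups of the root subgroups $N_\alpha(F)$, each $N_{\alpha,0}$ being a lattice in $N_\alpha(F) \cong F$ (the product taken in any fixed order, and the decomposition being a homeomorphism). Since $t \in T(F)$ normalises each $N_\alpha(F)$ and acts on it through multiplication by $\alpha(t) \in F^\times$, conjugation by $t$ sends $N_{\alpha,0}$ to $\alpha(t) \cdot N_{\alpha,0}$ inside $N_\alpha(F) \cong F$. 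Hence $t N_{\alpha,0} t^{-1} \subset N_{\alpha,0}$ holds if and only if $\val(\alpha(t)) \geq 0$, i.e. $\alpha(t) \in \For_F$.

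First I would handle the easy direction and the reduction to simple roots. If $t N_0 t^{-1} \subset N_0$, then projecting to each factor (or intersecting with $N_\alpha(F)$, which is stable under conjugation by $t$) gives $t N_{\alpha,0} t^{-1} \subset N_{\alpha,0}$ for every $\alpha \in \Phi^+$, hence $\val(\alpha(t)) \geq 0$ for all $\alpha \in \Phi^+$, in particular for all $\alpha \in \Delta$, so $t \in T^+$. Conversely, suppose $t \in T^+$, so $\val(\alpha(t)) \geq 0$ for all $\alpha \in \Delta$. Writing any positive root $\beta = \sum_{\alpha \in \Delta} n_\alpha \alpha$ with $n_\alpha \in \Nbb$, one gets $\val(\beta(t)) = \sum_\alpha n_\alpha \val(\alpha(t)) \geq 0$, so in fact $\val(\beta(t)) \geq 0$ for \emph{all} $\beta \in \Phi^+$. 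Therefore $t N_{\beta,0} t^{-1} \subset N_{\beta,0}$ for each $\beta \in \Phi^+$.

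It then remains to deduce $t N_0 t^{-1} \subset N_0$ from the fact that conjugation by $t$ preserves each factor $N_{\beta,0}$. This is where I expect the only real (if mild) subtlety: the product decomposition $N_0 = \prod_\beta N_{\beta,0}$ is a homeomorphism of spaces for a fixed ordering of $\Phi^+$, but it is not a group isomorphism, so one cannot simply conjugate factor by factor. The clean way is to use that $N_0$ is, by the compatibility hypothesis, \emph{generated as a group} by the $N_{\beta,0}$ (equivalently, stable under the group operations in a way that the appendix makes precise): an element $n \in N_0$ can be written as a finite product of elements $n_i \in N_{\beta_i,0}$, and then $t n t^{-1} = \prod_i (t n_i t^{-1})$ with each $t n_i t^{-1} \in N_{\beta_i,0} \subset N_0$, so $t n t^{-1} \in N_0$. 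Alternatively, if the appendix phrases compatibility via closure under commutators (Chevalley commutator formula with integral structure constants), one checks that the ordered product set $\prod_\beta N_{\beta,0}$ is already a subgroup and is stable under conjugation by $t$ since each factor is and the reordering needed after conjugating is absorbed by the commutator relations, whose structure constants lie in $\For_F$. Either way the argument is routine once the precise meaning of ``compatible avec la décomposition radicielle'' from the appendix is invoked; the mathematical content is entirely contained in the root-by-root computation $t N_{\beta,0} t^{-1} = \alpha(t) N_{\beta,0}$ and the elementary observation that a lattice in $F$ is preserved by multiplication by $x$ exactly when $\val(x) \geq 0$.
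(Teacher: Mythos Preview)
Your argument is correct and rests on the same root-by-root computation as the paper: the content in both cases is that conjugation by $t$ acts on each root factor $N_\alpha(F)\cong F$ as multiplication by $\alpha(t)$, and this preserves the lattice $N_{\alpha,0}$ exactly when $\val(\alpha(t))\geq 0$.

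The difference is in how the passage between $N_0$ and its root factors is handled. You work at the group level, where $N_0=\prod_\beta N_{\beta,0}$ is only a homeomorphism and not a group isomorphism, so you must argue (as you do) via generation or commutator relations to deduce $tN_0t^{-1}\subset N_0$ from stability of each factor; this is essentially Proposition~\ref{prop:rad}. The paper instead transports everything through the exponential to $\Lie(N_0)$, where the compatibility hypothesis says $\Lie(N_0)=\bigoplus_\alpha M_{\alpha,0}$ is a genuine \emph{linear} direct sum; stability of $\Lie(N_0)$ under the adjoint action of $t$ is then tautologically equivalent to stability of each summand, and $\exp$ carries this back to $N_0$. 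The paper's route is shorter precisely because linearity sidesteps the non-commutativity you rightly flag; your route makes explicit the group-theoretic fact that the $N_{\beta,0}$ generate $N_0$, which the paper leaves packaged inside the appendix.
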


\begin{proof}
Puisque $N_0$ est un sous-groupe standard de $N(F)$ compatible avec la décomposition radicielle, on a $\Lie(N_0) = \bigoplus_{\alpha \in \Phi^+} M_{\alpha,0}$ avec $M_{\alpha,0}$ un $\Zp$-réseau borné du sous-espace propre $\Lie(N_\alpha)$. Ainsi l'action adjointe de $t \in T(F)$ sur $\Lie(N)$ stabilise $M_{\alpha,0}$ si et seulement si $(\val \circ \alpha)(t) \geq 0$. On en déduit le lemme en utilisant l'exponentielle.
\end{proof}

\begin{lemm} \label{lemm:tau}
Il existe $\tau \in T^+$ vérifiant les propriétés équivalentes suivantes.
\begin{enumerate}[(i)]
\item $\forall \alpha \in \Delta, \val(\alpha(\tau))>0$.
\item $\left( \tau^k N_0 \tau^{-k} \right)_{k \in \Nbb}$ est une base de voisinages de $1 \in N_0$.
\item $N(F) = \bigcup_{k \in \Nbb} \tau^{-k} N_0 \tau^k$.
\item $\tau^{-1}$ et $T^+$ engendrent $T(F)$ comme monoïde.
\end{enumerate}
\end{lemm}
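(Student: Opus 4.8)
Le plan est d'exhiber d'abord un $\tau$ satisfaisant (i), puis de vérifier que les quatre conditions sont équivalentes pour un élément quelconque de $T(F)$ — et en particulier qu'elles entraînent $\tau \in T^+$ (immédiat pour (i) par définition de $T^+$, et ramené à (i) dans les autres cas). Pour l'existence, on prend $\tau = \lambda(p)$ où $\lambda \in X_*(T)$ est un cocaractère régulier dominant : un tel $\lambda$ existe car le cône des cocaractères réguliers dominants est un ouvert non vide de $X_*(T) \otimes_{\Z} \mathbb{R}$ défini par des inégalités rationnelles, et alors $\val(\alpha(\tau)) = \langle \alpha, \lambda \rangle > 0$ pour tout $\alpha \in \Delta$.

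Pour (i) $\Rightarrow$ (ii) : puisque $N_0$ est standard compatible avec la décomposition radicielle, $\Lie(N_0) = \bigoplus_{\alpha \in \Phi^+} M_{\alpha,0}$ où $M_{\alpha,0}$ est un réseau borné de $\Lie(N_\alpha)$ sur lequel $\mathrm{Ad}(\tau^k)$ agit par $\alpha(\tau)^k$ ; en écrivant toute racine positive comme combinaison à coefficients entiers positifs de racines simples, (i) donne $\val(\alpha(\tau)) > 0$, donc $\mathrm{Ad}(\tau^k)$ contracte $\Lie(N_0)$ vers $0$, et via l'exponentielle (appendice \ref{app:Lie}) la suite $\left( \tau^k N_0 \tau^{-k} \right)_{k \in \Nbb}$ est une suite décroissante de sous-groupes ouverts d'intersection $\{1\}$, donc une base de voisinages de $1$ dans $N_0$. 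Les implications (ii) $\Rightarrow$ (i) et (iii) $\Rightarrow$ (i) s'obtiennent par des arguments analogues en intersectant avec un sous-groupe radiciel simple $N_\alpha(F) \cong F$, que $\tau$ normalise en y agissant par multiplication par $\alpha(\tau)$ : sous (ii) les parties $\alpha(\tau)^k (N_0 \cap N_\alpha(F))$ deviennent arbitrairement petites, sous (iii) les parties $\alpha(\tau)^{-k} (N_0 \cap N_\alpha(F))$ épuisent $N_\alpha(F)$ qui n'est pas borné ; dans les deux cas cela force $\val(\alpha(\tau)) > 0$.

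Pour (i) $\Rightarrow$ (iii) : par (i) et le lemme \ref{lemm:Tp} on a $\tau \in T^+$, donc $\left( \tau^{-k} N_0 \tau^k \right)_{k \in \Nbb}$ est croissante et sa réunion $N'$ est un sous-groupe ouvert de $N(F)$ ; pour $\alpha \in \Phi^+$, $\val(\alpha(\tau)) > 0$ donne $N_\alpha(F) = \bigcup_k \alpha(\tau)^{-k}(N_0 \cap N_\alpha(F)) \subset N'$, et comme $N(F)$ est engendré par ses sous-groupes radiciels (isomorphisme \eqref{prodNa}) on obtient $N' = N(F)$. Pour (i) $\Leftrightarrow$ (iv) : si (i) vaut, alors pour tout $t \in T(F)$ on a $\val(\alpha(\tau^N t)) = N \val(\alpha(\tau)) + \val(\alpha(t)) \geq 0$ pour tout $\alpha \in \Delta$ dès que $N \in \Nbb$ est assez grand (il n'y a qu'un nombre fini de $\alpha$), d'où $t = \tau^{-N} \cdot (\tau^N t)$ avec $\tau^N t \in T^+$ ; réciproquement, si $\val(\alpha_0(\tau)) \leq 0$ pour un $\alpha_0 \in \Delta$, tout produit d'une puissance de $\tau^{-1}$ et d'éléments de $T^+$ vérifie $\val(\alpha_0(\cdot)) \geq 0$ (lemme \ref{lemm:Tp} et additivité de $\val$), alors que $T(F)$ contient des éléments $t$ avec $\val(\alpha_0(t))$ strictement négatif, si bien que $\tau^{-1}$ et $T^+$ n'engendrent pas $T(F)$.

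La seule étape non formelle est le passage de l'action contractante de $\mathrm{Ad}(\tau)$ sur $\Lie(N_0)$ à la propriété que $\left( \tau^k N_0 \tau^{-k} \right)_{k}$ forme une base de voisinages de $1$ : elle repose sur la correspondance exponentielle/logarithme pour le groupe analytique $p$-adique $N_0$, traitée dans l'appendice \ref{app:Lie}. Tout le reste n'est que de la comptabilité sur le système de racines.
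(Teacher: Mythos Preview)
Your proof is correct and follows essentially the same approach as the paper: the equivalences (i)--(ii)--(iii) are handled via the adjoint action on $\Lie(N_0)$ and the exponential (exactly as in the proof of Lemma~\ref{lemm:Tp}, to which the paper simply refers), and (i)~$\Leftrightarrow$~(iv) is immediate from the definition of $T^+$ and commutativity of $T(F)$. For existence the paper argues that the image of $\nu : t \mapsto (\val(\alpha(t)))_{\alpha\in\Delta}$ spans $\Q^\Delta$ via nondegeneracy of the Cartan pairing, which is the same content as your choice of a regular dominant cocharacter---you have just written out the details that the paper leaves implicit.
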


\begin{proof}
L'équivalence entre (i) et (iv) est immédiate en utilisant la définition de $T^+$. Pour l'équivalence entre (i), (ii) et (iii) on procède comme dans la preuve du lemme précédent. Pour prouver l'existence, on considère le morphisme de groupes continu
\begin{equation} \label{nu}
\nu : T(F) \to \Q^\Delta, t \mapsto ((\val \circ \alpha)(t))_{\alpha \in \Delta}.
\end{equation}
Pour tout $\beta \in \Delta$, on a $\nu(\beta^\vee(p)) = (\langle \alpha, \beta^\vee \rangle)_{\alpha \in \Delta}$. Comme l'accouplement $\langle ~,~ \rangle : \Z[\Delta] \times \Z[\Delta^\vee] \to \Z$ induit une dualité lorsque l'on tensorise par $\Q$ sur $\Z$, on en déduit que $\im \nu$ engendre $\Q^\Delta$ sur $\Q$.
Ainsi pour tout $c \in \Nbb$, il existe $t \in T(F)$ tel que $\val(\alpha(t)) \geq c$ pour tout $\alpha \in \Delta$. En particulier, il existe $\tau \in T(F)$ tel que $(\val \circ \alpha)(\tau)>0$ pour tout $\alpha \in \Phi^+$ (puisque $\Delta$ engendre $\Phi^+$ comme monoïde).
\end{proof}

On rappelle que $S$ est le plus grand sous-tore déployé de $\Res T$. On définit un sous-groupe ouvert et un sous-monoïde de $S(\Qp) \subset T(F)$ en posant
\begin{equation*}
S_0 \dfn S(\Qp) \cap T_0 \quad \text{et} \quad S^+ \dfn S(\Qp) \cap T^+.
\end{equation*}
On a donc $S_0 \subset S^+$. La remarque suivante met en évidence le lien entre les (co)caractères algébriques de $S$ et de $T$.

\begin{rema} \label{rema:S}
Le morphisme canonique $(\Res T)_F \to T$ induit un isomorphisme $S_F \iso T$. Comme $S$ est déployé, tout (co)caractère algébrique de $S_F$ est induit par un (co)caractère algébrique de $S$. On en conclut que les (co)caractères algébriques de $S$ s'identifient naturellement aux (co)caractères algébriques de $T$.
En particulier, la restriction à $S(\Qp)$ d'un morphisme $T(F) \to F^\times$ induit par un caractère algébrique de $T$ est à valeurs dans $\Qp^\times$ et la restriction à $\Qp^\times$ d'un morphisme $F^\times \to T(F)$ induit par un cocaractère algébrique de $T$ est à valeurs dans $S(\Qp)$.
\end{rema}

\subsubsection*{Action de Hecke}

Soit $\N \subset N$ un sous-groupe fermé stable sous l'action par conjugaison de $T$. On note $\Nz$ l'intersection de $\N(F)$ avec $N_0$. C'est un sous-groupe ouvert compact standard de $\N(F)$ stable sous l'action par conjugaison de $T^+$. On note $\NT$ le sous-monoïde de $T(F) \ltimes \N(F)$ engendré par $T^+$ et $\Nz$. Il contient le sous-groupe ouvert $T_0 \ltimes \Nz$.

Soit $V$ une représentation lisse de $\NT$ sur $A$. On définit l'\emph{action de Hecke} de $t \in T^+$ sur les $A$-modules $\Hc(\Nz,V)$ comme la composée
\begin{equation} \label{hecke}
\Hc(\Nz,V) \to \Hc(t\Nz t^{-1},V) \to \Hc(\Nz,V)
\end{equation}
où le premier morphisme est induit par l'action de $t$ sur $V$ et le second est la corestriction de $t\Nz t^{-1}$ à $\Nz$.

\begin{lemm}
Les $A$-modules $\Hc(\Nz,V)$ munis de l'action de Hecke sont des représentations lisses de $T^+$ sur $A$.
\end{lemm}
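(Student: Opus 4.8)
The statement asserts that, for a smooth representation $V$ of the monoid $\NT$ over $A$, the cohomology $A$-modules $\Hc[n](\Nz,V)$ equipped with the Hecke action \eqref{hecke} form smooth representations of $T^+$. There are really three things to check: first, that formula \eqref{hecke} actually defines an \emph{action} of the monoid $T^+$ (i.e.\ is compatible with multiplication in $T^+$ and sends $1$ to the identity); second, that the restriction of this action to the open subgroup $T_0 \subset T^+$ coincides with a genuine \emph{group} action; and third, that this $T_0$-action is \emph{smooth}, i.e.\ every cohomology class has open stabilizer in $T_0$. The functoriality of group cohomology under conjugation, together with the standard interplay between restriction and corestriction, will do the first two; the smoothness will follow from a direct limit argument using that $V$ is smooth.

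First I would recall the two elementary facts about cohomology of profinite groups that underlie \eqref{hecke}. For $t \in T^+$ we have $t\Nz t^{-1} \subset \Nz$ by (the analogue of) Lemma~\ref{lemm:Tp} applied to $\N$, and this is a finite-index inclusion of compact open subgroups, so the corestriction (transfer) map $\cor_{t\Nz t^{-1}}^{\Nz} : \Hc(t\Nz t^{-1},V) \to \Hc(\Nz,V)$ is defined. Meanwhile the automorphism $n \mapsto tnt^{-1}$ of $\Nz$ onto $t\Nz t^{-1}$ together with the action of $t$ on $V$ (which is an isomorphism $V \iso V$ intertwining the $\Nz$-action with the $t\Nz t^{-1}$-action via that automorphism) induces $\Hc(\Nz,V) \to \Hc(t\Nz t^{-1},V)$; this is the first map in \eqref{hecke}. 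To see that the composite is multiplicative, take $t,t' \in T^+$: the conjugation/$t$-twist maps compose contravariantly in the obvious way, giving the conjugation/$tt'$-twist map $\Hc(\Nz,V)\to\Hc((tt')\Nz(tt')^{-1},V)$ factored through $\Hc(t\Nz t^{-1},V)$; and corestriction is transitive along the tower $(tt')\Nz(tt')^{-1} \subset t\Nz t^{-1} \subset \Nz$ (noting $(tt')\Nz(tt')^{-1} = t(t'\Nz t'^{-1})t^{-1} \subset t\Nz t^{-1}$), while corestriction from $t\Nz t^{-1}$ commutes with the $t$-conjugation twist by naturality. Carefully threading these compatibilities gives that the Hecke action of $t$ then of $t'$ equals the Hecke action of $t't$ (or $tt'$, consistently with the convention used elsewhere in \cite{Em1,Em2}); the case $t=1$ gives the identity since both the twist and the corestriction are then the identity. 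This can be organized most cleanly by invoking the analogue of \cite[Definition--Lemma 3.1.3]{Em2} (or the corresponding construction there) rather than recomputing.

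Next, for $t \in T_0$ we have $tN_0t^{-1} = N_0$ (as $t \in \ker\nu$, so all $\val(\alpha(t)) = 0$ and $t$ normalizes $N_0$ by Lemma~\ref{lemm:Tp} applied to both $t$ and $t^{-1}$), hence $t\Nz t^{-1} = \Nz$; so the corestriction in \eqref{hecke} is trivial and the Hecke action of $t$ is just the conjugation action on cohomology induced by the automorphism $n \mapsto tnt^{-1}$ of $\Nz$ and the action of $t$ on $V$. This is visibly a group action of $T_0$, and it agrees with the monoid action restricted to $T_0$. Moreover, since $T_0 \ltimes \Nz$ is an open subgroup of $\NT$ and $V$ is smooth as a representation of this subgroup, the conjugation action of $T_0$ on each $\Hc[n](\Nz,V)$ is the one coming from the $(T_0\ltimes\Nz)$-action on $V$ via the Hochschild--Serre / inflation mechanism: concretely, $\Hc[n](\Nz,-)$ is a functor on smooth $(T_0\ltimes\Nz)$-representations and $T_0$ acts through its outer action on the pair $(\Nz,V)$.

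Finally, for smoothness: fix a class $c \in \Hc[n](\Nz,V)$, represented by a locally constant cocycle $\phi : \Nz^n \to V$. Since $\Nz^n$ is compact and $\phi$ is locally constant, its image $\phi(\Nz^n)$ is finite, and because $V$ is smooth each of these finitely many vectors has open stabilizer in $T_0$; intersecting gives an open subgroup $T_1 \subset T_0$ fixing $\phi(\Nz^n)$ pointwise. Shrinking further, we may also assume $T_1$ is small enough that conjugation by $T_1$ fixes (pointwise, or at least preserves) the locally constant structure of $\phi$ — more precisely, writing $\phi$ as constant on cosets of some open subgroup $\Nz'^n$ with $\Nz'$ normal in $\Nz$, we take $T_1$ to normalize $\Nz'$ as well (possible since $\Nz$ has a neighbourhood basis of $T_0$-stable open subgroups, the $\tau^k$-conjugates of a standard one, cf.\ Lemma~\ref{lemm:tau}). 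Then for $t \in T_1$ the twisted cocycle $n \mapsto t\cdot\phi(t^{-1}nt)$ equals $\phi$ on each relevant coset, hence equals $\phi$; so $t$ fixes $c$. Thus every class has open stabilizer, i.e.\ the $T_0$-action is smooth, which by the preceding paragraph is exactly what it means for $\Hc[n](\Nz,V)$ with the Hecke action to be a smooth representation of $T^+$.

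\textbf{Main obstacle.} The genuinely delicate point is bookkeeping the cocycle-level compatibility of conjugation with corestriction to prove multiplicativity of the Hecke action (the associativity/functoriality in the first paragraph above); this is routine but fiddly, and in practice one sidesteps it by citing the corresponding construction in \cite{Em1,Em2} verbatim, which is precisely what the remark preceding the lemma ("on vérifie que ces définitions généralisent celles de \cite{Em1,Em2}") sets up. The smoothness argument is comparatively soft once one exploits that cocycles on a compact group landing in a smooth module are "finite data."
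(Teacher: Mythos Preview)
Your strategy is sound and covers the right three points (monoid action, coincidence with the natural $T_0$-action, smoothness), but it differs from the paper's proof and there is a genuine slip in your smoothness step.

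\medskip

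\textbf{Comparison with the paper.} The paper does not work with cocycles in higher degree at all. It first treats degree~$0$: there the Hecke $T_0$-action on $V^{\Nz}$ is literally the restriction of the given smooth $T_0$-action on $V$, so smoothness is immediate. For degree $>0$ it chooses an injective resolution $V \hookrightarrow I^\bullet$ in the category of smooth $\NT$-representations (this category has enough injectives by Lemme~\ref{lemm:mnd}, and such injectives are $\Nz$-acyclic by \cite[Proposition~2.1.11]{Em2}); then $\Hc(\Nz,V)$ is the cohomology of the complex $(I^\bullet)^{\Nz}$ of smooth $T^+$-representations, and one is done. This bypasses all cocycle manipulations. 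Your direct approach is more elementary and self-contained, at the cost of having to actually verify smoothness on cochains.

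\medskip

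\textbf{The gap.} In your smoothness argument you write $\phi$ as constant on cosets of $(\Nz')^n$ and then shrink $T_1$ so that it \emph{normalizes} $\Nz'$, concluding that $(t\cdot\phi)(n)=t\cdot\phi(t^{-1}nt)$ equals $\phi(n)$. But normalizing $\Nz'$ only says that conjugation by $t$ \emph{permutes} the cosets $\Nz/\Nz'$; it does not say it fixes them. What you need is the stronger condition $[T_1,\Nz]\subset\Nz'$, i.e.\ that conjugation by $T_1$ is trivial on the finite set $\Nz/\Nz'$. This is obtainable: the map $T_0\times\Nz\to\Nz$, $(t,n)\mapsto n^{-1}t^{-1}nt$, is continuous and sends $\{1\}\times\Nz$ to $1$, so for each of the finitely many coset representatives $n_0$ of $\Nz/\Nz'$ the set $\{t\in T_0 : n_0^{-1}t^{-1}n_0t\in\Nz'\}$ is open and contains $1$; intersect over these representatives, pass to an open subgroup, and (also requiring it to normalize $\Nz'$, so the condition propagates from representatives to full cosets) you get the desired $T_1$. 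With that correction your argument goes through.
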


\begin{proof}
On montre d'abord le lemme en degré $0$. L'action de Hecke de $T^+$ sur $V^{\Nz}$ est une action de monoïde d'après \cite[Lemme 3.1.4]{Em1}. Comme le groupe $T_0 \subset T^+$ normalise $\Nz$, l'action de Hecke de $T_0$ sur $V^{\Nz}$ coïncide avec l'action lisse de $T_0$ sur $V^{\Nz} \subset V$. On en déduit que l'action de Hecke de $T^+$ sur $V^{\Nz}$ est lisse. En degré supérieur, on calcule les $A$-modules $\Hc(\Nz,V)$ à l'aide d'une résolution injective $V \hookrightarrow I^\bullet$ dans la catégorie des représentations lisses de $\NT$ sur $A$ (voir le lemme \ref{lemm:mnd} et \cite[Proposition 2.1.11]{Em2}) et on utilise le résultat en degré $0$ avec $(I^\bullet)^{\Nz}$.
\end{proof}

Par naturalité des morphismes de \eqref{hecke}, tout morphisme de représentations lisses de $\NT$ sur $A$ induit un morphisme $T^+$-équivariant pour l'action de Hecke en cohomologie.
Les foncteurs $\Hc(\Nz,-)$ forment donc un $\delta$-foncteur universel de la catégorie des représentations lisses de $\NT$ sur $A$ dans la catégorie des représentations lisses de $T^+$ sur $A$.

\begin{rema} \label{rema:heckeS}
On peut adapter ce qui précède à $S$ et $\Res N$.
Soient $^\backprime \N \subset \Res N$ un sous-groupe fermé stable sous l'action par conjugaison de $S$ et $^\backprime \Nz$ l'intersection de ${^\backprime \N}(\Qp)$ avec $N_0$. Si $V$ est une représentation lisse de $S^+ \ltimes {^\backprime \Nz}$ sur $A$, alors on définit l'action de Hecke de $s \in S^+$ sur les $A$-modules $\Hc(^\backprime \Nz,V)$ par \eqref{hecke} avec $s$ et $^\backprime \Nz$ au lieu de $t$ et $\Nz$.
On obtient ainsi un $\delta$-foncteur universel de la catégorie des représentations lisses de $S^+ \ltimes {^\backprime \Nz}$ sur $A$ dans la catégorie des représentations lisses de $S^+$ sur $A$.
Cette définition est compatible avec la précédente : si $^\backprime \N = \Res \N$, alors $^\backprime \Nz=\Nz$ et l'action de Hecke $S^+$ est la restriction de l'action de Hecke de $T^+$.
\end{rema}

\subsubsection*{Calcul en degré maximal}

On rappelle que $\dim \Nz$ désigne la dimension de $\Nz$ en tant que variété analytique sur $\Qp$. On a donc $\dim \Nz = [F:\Qp] \cdot \dim \N$ avec $\dim \N$ la dimension du groupe algébrique $\N$ sur $F$.

\begin{lemm} \label{lemm:Hnul}
Soit $n \in \Nbb$. Si $n> \dim \Nz$, alors $\Hc[n](\Nz,V)=0$.
\end{lemm}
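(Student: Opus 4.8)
The statement is the standard vanishing of cohomology of a compact $p$-adic analytic group above its dimension, applied to $\Nz$ and the smooth representation $V$. The plan is to reduce to a computation with a suitable resolution whose length is controlled by $\dim\Nz$.

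First I would recall that $\Nz$, being a compact open standard subgroup of $\N(F)$ compatible with the root decomposition, is a compact $p$-adic analytic group of dimension $\dim\Nz = [F:\Qp]\cdot\dim\N$ as a $\Qp$-analytic variety; moreover, being unipotent and pro-$p$ (after possibly shrinking, or directly by construction), it admits a chain of closed normal subgroups with successive quotients isomorphic to $\Zp$, of length exactly $\dim\Nz$. This is the structural input: $\Nz$ is \emph{poly-$\Zp$} of Hirsch length $\dim\Nz$.

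Next, since the $\delta$-functor $\Hc(\Nz,-)$ on smooth representations of $\Nz$ over $A$ coincides with the derived functors of $\Nz$-invariants (this is recorded in the excerpt, via Lemma \ref{lemm:inj} and the effaceability argument), I would compute it by dévissage along the chain. For a single copy of $\Zp$ acting smoothly on a module $V'$, one has $\Hc[n](\Zp,V')=0$ for $n\geq 2$: indeed smooth cohomology of $\Zp$ is computed by the two-term complex $V' \xrightarrow{\gamma-1} V'$ where $\gamma$ is a topological generator (every smooth cocycle is determined by its value on $\gamma$, and continuity/local constancy forces this two-term description), so cohomological dimension of $\Zp$ is $1$. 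Then the Hochschild–Serre spectral sequence $\Hc[i](\Nz/\Nz',\Hc[j](\Nz',V)) \Rightarrow \Hc[i+j](\Nz,V)$ for a closed normal $\Zp$-quotient, combined with induction on the Hirsch length, gives $\Hc[n](\Nz,V)=0$ for $n > \dim\Nz$: at each step the cohomological dimension increases by at most $1$.

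Alternatively, and perhaps cleaner to write, I would invoke that smooth cohomology of the pro-$p$ group $\Nz$ agrees with continuous cochain cohomology, and that a compact $p$-adic analytic group of dimension $\delta$ has cohomological dimension $\delta$ (Lazard); applied with $A$ killed by a power of $\pe$ one reduces to coefficients over $\ke$ and then over $\Fp$, where Lazard's theorem directly applies. The main obstacle is purely expository: deciding whether to give the self-contained dévissage via Hochschild–Serre (which requires checking the smooth version of the spectral sequence, available from the injective resolutions of Lemma \ref{lemm:mnd}) or to cite Lazard and handle the passage from $\Fp$- to $A$-coefficients by the artinian filtration of $A$. I would favor the dévissage argument since the Hochschild–Serre formalism for smooth representations is already implicitly in use, and it keeps the proof elementary; the one genuine point to verify is that $\Nz$ admits a $\Zp$-poly series of the expected length, which follows from the exponential map identifying $\Nz$ (or an open subgroup, sufficient since cohomology is insensitive to finite-index passage in this vanishing range by a transfer/corestriction argument) with a $\Zp$-lattice in $\Lie(\N)(F)$ filtered by the lower central series.
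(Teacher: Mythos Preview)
Your argument is correct and is essentially the content of the result the paper invokes: the paper's own proof consists of a single citation to \cite[Lemme~3.5.4]{Em2}, so there is no comparison of approaches to make beyond noting that you have unpacked what Emerton proves there (vanishing of smooth cohomology of a compact $p$-adic analytic group above its dimension, via the poly-$\Zp$ structure and iterated Hochschild--Serre, or via Lazard).

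One small caution on your hedge: the parenthetical claim that passage to an open subgroup of finite index is harmless ``by a transfer/corestriction argument'' is not safe here, since the index is a $p$-power and $A$ is $p$-power-torsion, so $\mathrm{cores}\circ\mathrm{res}$ is multiplication by zero. Fortunately you do not actually need this: $\Nz$ itself is torsion-free nilpotent pro-$p$ (being identified via the exponential with a $\Zp$-lattice in $\Lie(\N)$), its lower central series has successive quotients that are free $\Zp$-modules, and Hochschild--Serre applied to each step (using that $\Zp^r$ has smooth cohomological dimension $r$) gives the bound directly for $\Nz$, without shrinking. If you keep the d\'evissage write-up, drop the finite-index remark.
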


\begin{proof}
Voir \cite[Lemme 3.5.4]{Em2}.
\end{proof}

On note $V_{\Nz}$ le $A$-module des coinvariants par $\Nz$, c'est-à-dire le quotient de $V$ par le sous-$A$-module engendré par les éléments de la forme $n \cdot v - v$ avec $n \in \Nz$ et $v \in V$. On note $\alpha$ le caractère algébrique de la représentation adjointe de $T$ sur $\det_F \Lie(\N)$.

\begin{prop} \label{prop:dimN}
Soit $n \in \Nbb$. Si $n=\dim \Nz$, alors on a un isomorphisme naturel $T^+$-équivariant
\begin{equation*}
\Hc[n](\Nz,V) \cong V_{\Nz} \otimes (\oma),
\end{equation*}
l'action de $t \in T^+$ sur $V_{\Nz}$ étant la composée
\begin{equation*}
V_{\Nz} \to V_{t\Nz t^{-1}} \to V_{\Nz}
\end{equation*}
où le premier morphisme est induit par l'action de $t$ sur $V$ et le second est la projection naturelle.
\end{prop}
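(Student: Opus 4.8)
The strategy is to compute the top cohomology $\Hc[n](\Nz,V)$ with $n = \dim\Nz$ via Poincaré duality for the compact $p$-adic Lie group $\Nz$, which is a compact analytic manifold over $\Qp$ of dimension $n$, and then to track the $T^+$-action through this duality. Recall that the cohomology here is computed with locally constant cochains; for a compact $p$-adic Lie group $\Nz$ this coincides with continuous cohomology, and Lazard's theory (or the treatment in \cite[§3.5]{Em2}) gives that the top cohomology is one-dimensional over the residue field in the sense that $\Hc[n](\Nz,A) \cong A$ up to a twist by the ``orientation'' character. More precisely, the key input is that for any smooth representation $V$ of $\Nz$ one has a natural isomorphism of $A$-modules $\Hc[n](\Nz,V) \cong V_{\Nz} \otimes_A \Hc[n](\Nz,A)$, and that $\Nz$ being (pro-$p$ and) ``orientable'' in the appropriate sense, $\Hc[n](\Nz,A) \cong A$ as an $A$-module but carries a nontrivial action of the normalizer via the adjoint action on $\det_F\Lie(\N)$ composed with the relevant modulus.

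The steps I would carry out are as follows. First, establish the $A$-module isomorphism $\Hc[n](\Nz,V) \cong V_{\Nz} \otimes_A \Hc[n](\Nz,A)$ in top degree: this follows from the universal $\delta$-functor property together with the fact that $\Hc[n](\Nz,-)$ is right exact (being the top nonvanishing functor by Lemma \ref{lemm:Hnul}), hence is the zeroth left-derived functor of $V \mapsto V_{\Nz}$ up to the fixed twist $\Hc[n](\Nz,A)$; alternatively one argues directly with a projective (bar-type) resolution. Second, identify $\Hc[n](\Nz,A)$ with $\oma$ as a representation of $T^+$ acting by the Hecke action: the Hecke action of $t \in T^+$ on $\Hc[n](\Nz,A)$ is the composite of the map induced by $t$ on $A$ (trivial) with the corestriction $\Hc[n](t\Nz t^{-1},A) \to \Hc[n](\Nz,A)$, and corestriction in top degree along an open subgroup of index $[\Nz : t\Nz t^{-1}]$ is multiplication by that index on the (one-dimensional) top cohomology. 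Since $t\Nz t^{-1} \subset \Nz$ with $[\Nz : t\Nz t^{-1}] = |\det{}_F \mathrm{Ad}(t)|_p^{-1}$ acting on $\Lie(\N)$, and this index equals $|\alpha(t)|_p^{-1} = \omega(\alpha(t))^{-1} \cdot (\text{unit})$ — more precisely the normalized absolute value $|\Nrm(\alpha(t))|_p$ — one reads off that the Hecke action is precisely multiplication by $(\omega^{-1}\circ\alpha)(t)$, giving the twist $\oma$. Third, combine: the isomorphism is natural in $V$ because all three constituents (corestriction, the map induced by $t$, the projection $V_{t\Nz t^{-1}} \to V_{\Nz}$) are natural, and one checks the stated description of the $T^+$-action on $V_{\Nz}$ matches by unwinding the definition \eqref{hecke} of the Hecke action degree by degree through the resolution.

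The main obstacle is the precise bookkeeping of the twist in step two: one must correctly match the index $[\Nz : t\Nz t^{-1}]$ with the value $|\alpha(t)|_p$ of the adjoint character, being careful about (a) the normalization of the valuation ($\val(p)=1$) versus the residue-field size, (b) the fact that $\Nz$ is a manifold over $\Qp$ so the relevant determinant is $\det_F\Lie(\N)$ viewed as a $\Qp$-vector space, which is where the norm $\Nrm$ enters via $\varepsilon(x) = \Nrm(x)|\Nrm(x)|_p$, and (c) that we work with the reduction $\omega$ of $\varepsilon$ in $A^\times$, so only the $p$-adic unit part survives — indeed $\omega(\alpha(t))$ depends only on $\alpha(t) \bmod p$, consistent with $t\Nz t^{-1}$ having $p$-power index. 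Granting the standard Poincaré-duality computation of $\Hc[n](\Nz,A)$ from \cite[§3.5]{Em2} (which already underlies Lemma \ref{lemm:Hnul}), the identification of the corestriction with multiplication by the index is a formal property of cohomology of profinite groups in top degree, and the naturality is immediate, so the proof reduces to this one twist computation.
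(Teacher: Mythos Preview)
Your overall strategy --- compute the top cohomology via the natural isomorphism $\Hc[n](\Nz,V)\cong V_{\Nz}\otimes_{\Zp}\det_{\Zp}\Lie(\Nz)^\vee$ of \cite[Proposition~3.5.6]{Em2} and then track the Hecke action through it --- is exactly what the paper does. But your step two contains a genuine error that breaks the twist computation.

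You assert that the first arrow in \eqref{hecke}, namely $\Hc[n](\Nz,A)\to\Hc[n](t\Nz t^{-1},A)$, is trivial because $t$ acts trivially on $A$. It is not: this is the conjugation isomorphism on cohomology, and under the determinant identification it is the map $\det_{\Zp}\Lie(\Nz)^\vee\to\det_{\Zp}\Lie(t\Nz t^{-1})^\vee$ induced by $\mathrm{Ad}(t)$, i.e.\ multiplication by $(\Nrm\circ\alpha)(t)^{-1}$ once both rank-one $\Zp$-modules are compared inside $\det_{\Qp}\Lie(\N)^\vee$. You then claim that corestriction in top degree is multiplication by the index $[\Nz:t\Nz t^{-1}]$; that is the formula for $\mathrm{cor}\circ\mathrm{res}$, not for $\mathrm{cor}$ alone, and the first arrow here is conjugation, not restriction. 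In the same normalization the corestriction contributes the factor $|(\Nrm\circ\alpha)(t)|_p^{-1}$, by \cite[Lemme~3.5.10]{Em2}. Your displayed relation $|\alpha(t)|_p^{-1}=\omega(\alpha(t))^{-1}\cdot(\text{unit})$ cannot hold: the left-hand side is a power of $p$ and the right-hand side is a unit.

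The error is not cosmetic. If the Hecke action on $\Hc[n](\Nz,A)$ were multiplication by the index alone, it would be multiplication by a positive power of $p$ whenever $t\notin T_0$, hence the zero map in $A$ --- contradicting the proposition. What actually happens is that the two factors combine to
\[
(\Nrm\circ\alpha)(t)^{-1}\cdot|(\Nrm\circ\alpha)(t)|_p^{-1}=\varepsilon(\alpha(t))^{-1}\in\Zp^\times,
\]
whose image in $A^\times$ is $(\omega^{-1}\circ\alpha)(t)$. The $p$-power in the adjoint determinant exactly cancels the $p$-power index, leaving only the unit part; this cancellation is the content of the paper's commutative diagram and the reason the character $\oma$ --- rather than zero --- appears.
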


\begin{proof}
Si $M$ est un $\Zp$-module libre de rang $r$, alors on note $M^\vee = \Hom_{\Zp}(M,\Zp)$ son dual et $\det_{\Zp} M = \Lambda_{\Zp}^r M$ sa plus haute puissance extérieure.
D'après \cite[Proposition 3.5.6]{Em2}, on a un isomorphisme $A$-linéaire naturel
\begin{equation} \label{dimN}
\Hc[n](\Nz,V) \iso V_{\Nz} \otimes_{\Zp} \det{_{\Zp}} \Lie(\Nz)^\vee.
\end{equation}
Par définition, $T(F)$ agit sur $\det_F \Lie(\N)$ à travers le caractère $\alpha$. On en déduit que l'action de $T(F)$ sur $\det_{\Qp} \Lie(\N)$ est donnée par le caractère $\Nrm \circ \alpha$.
Si $t \in T^+$, alors on a un diagramme commutatif
\begin{equation} \label{CD} \begin{tikzcd}[column sep=small,row sep=small]
\Hc[n](\Nz,V) \dar \rar{\sim} & V_{\Nz} \otimes_{\Zp} \det_{\Zp} \Lie(\Nz)^\vee \dar \\
\Hc[n](t\Nz t^{-1},V) \dar \rar{\sim} & V_{t\Nz t^{-1}} \otimes_{\Zp} (\det_{\Zp} \Lie(t\Nz t^{-1}))^\vee \dar \\
\Hc[n](\Nz,V) \rar{\sim} & V_{\Nz} \otimes_{\Zp} \det_{\Zp} \Lie(\Nz)^\vee
\end{tikzcd} \end{equation}
où les morphismes horizontaux correspondent à \eqref{dimN}, les morphismes verticaux de gauche sont ceux de la composée \eqref{hecke} et ceux de droite sont les suivants :
\begin{itemize}
\item le premier est le produit tensoriel du morphisme induit par l'action de $t$ sur $V$ avec la multiplication par $(\Nrm \circ \alpha)(t)^{-1}$ ;
\item le second est le produit tensoriel de la projection naturelle avec la multiplication par $|(\Nrm \circ \alpha)(t)|_p^{-1}$.
\end{itemize}
La commutativité du carré supérieur du diagramme \eqref{CD} est une conséquence de la naturalité de l'isomorphisme \eqref{dimN} tandis que celle du carré inférieur résulte de \cite[Lemme 3.5.10]{Em2}.
Enfin, la composée des morphismes verticaux de droite coïncide avec l'action de l'énoncé sur $V_{\Nz}$ tordue par le caractère $\oma$.
\end{proof}

\subsection{Étude à travers un dévissage}

On garde les notations précédentes : $\N, \Nz, V$. Le but de cette sous-section est de montrer comment la cohomologie de $\Nz$ à valeurs dans $V$ et l'action de Hecke de $T^+$ sur les $A$-modules $\Hc(\Nz,V)$ se décomposent à travers un dévissage de $\Nz$.

\subsubsection*{Cohomologie}

Soient $\N' \subset \N$ un sous-groupe fermé distingué stable sous l'action par conjugaison de $T$ et $\N''$ le groupe quotient $\N/\N'$. En utilisant l'isomorphisme \eqref{prodNa}, on voit que l'on a un produit semi-direct $\N = \N'' \ltimes \N'$. On note $\Nz'$ et $\Nz''$ les intersections respectives de $\N'(F)$ et $\N''(F)$ avec $N_0$. Ce sont des sous-groupes ouverts compacts standards de $\N'(F)$ et $\N''(F)$ respectivement. En utilisant la proposition \ref{prop:rad}, on voit que l'on a une suite exacte courte scindée de groupes topologiques
\begin{equation*}
1 \to \Nz' \to \Nz \to \Nz'' \to 1.
\end{equation*}

On veut exprimer la cohomologie de $\Nz$ à valeurs dans $V$ à travers ce dévissage. En degré $0$ on a l'égalité $V^{\Nz}=(V^{\Nz'})^{\Nz''}$ et en degré supérieur on a une suite spectrale de Hochschild-Serre (voir \cite[Chapitre I, § 2.6]{Ser}) de $A$-modules
\begin{equation} \label{HS}
\Hc[i](\Nz'',\Hc[j](\Nz',V)) \Rightarrow \Hc[i+j](\Nz,V)
\end{equation}
où pour tout $j \in \Nbb$, l'action de $n'' \in \Nz''$ sur $\Hc[j](\Nz',V)$ est induite au niveau des cochaînes par l'application $\phi \mapsto n''\phi$ définie par
\begin{equation*}
(n'' \phi)(n_1',\dots,n_j') = n'' \cdot \phi(n''^{-1}n_1'n'',\dots,n''^{-1}n_j'n'')
\end{equation*}
pour tout $(n_1',\dots,n_j') \in \Nz'^j$.

Si $\dim \Nz''=1$, alors la cohomologie de $\Nz''$ est nulle en degré strictement plus grand que $1$ d'après le lemme \ref{lemm:Hnul}. On déduit dans ce cas de la suite spectrale \eqref{HS} des suites exactes courtes de $A$-modules
\begin{equation} \label{HSSE}
0 \to \Hc[1](\Nz'',\Hc[n-1](\Nz',V)) \to \Hc[n](\Nz,V) \to \Hc[n](\Nz',V)^{\Nz''} \to 0.
\end{equation}
pour tout entier $n>0$. Le second morphisme non trivial est la restriction et lorsque $n=1$, le premier morphisme non trivial est l'inflation.

\subsubsection*{Action de Hecke}

L'action par conjugaison de $T^+$ sur $\Nz$ stabilise $\Nz'$ et $\Nz''$ et pour tout $t \in T^+$, on a une suite exacte courte scindée de groupes topologiques
\begin{equation*}
1 \to t\Nz't^{-1} \to t\Nz t^{-1} \to t\Nz''t^{-1} \to 1.
\end{equation*}
On veut exprimer l'action de Hecke de $T^+$ sur $\Hc(\Nz,V)$ à partir de celle (définie par \eqref{hecke} avec $\Nz'$ au lieu de $\Nz$) sur $\Hc(\Nz',V)$.

\begin{lemm}
Les $A$-modules $\Hc(\Nz',V)$ munis de l'action naturelle de $\Nz''$ et de l'action de Hecke de $T^+$ sont des représentations lisses de $\NT''$.
\end{lemm}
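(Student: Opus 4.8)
The plan is to follow the pattern of the smoothness statement already established for $\Hc(\Nz,V)$: handle degree $0$ directly, then bootstrap to all degrees via an injective resolution. Throughout I use that $\Nz''$ normalizes $\Nz'$ and that $T_0$ normalizes each of $\Nz$, $\Nz'$, $\Nz''$; indeed for $t \in T_0$ one has $\val(\alpha(t))=0$ for all $\alpha \in \Delta$, hence $tN_0t^{-1}=N_0$ by Lemme \ref{lemm:Tp}, while $\N'$ and $\N''$ are $T$-stables.

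\emph{Degré $0$.} Ici $\Hc[0](\Nz',V)=V^{\Nz'}$. Comme $\Nz''$ normalise $\Nz'$, ce sous-espace est stable sous l'action de $\Nz''$ sur $V$, et cette action est lisse puisque $V$ l'est ; c'est l'action naturelle. L'action de Hecke de $T^+$ sur $V^{\Nz'}$ est une action de monoïde d'après \cite[Lemme 3.1.4]{Em1} (appliqué avec $\Nz'$ au lieu de $N_0$), et pour $t \in T_0$ la corestriction dans \eqref{hecke} est l'identité, donc l'action de Hecke de $T_0$ est simplement la restriction à $V^{\Nz'}$ de l'action de $T_0$ sur $V$, donc lisse. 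Il reste à vérifier que ces deux actions se recollent en une action de monoïde de $\NT''$, c'est-à-dire que $U_t \circ n'' = (tn''t^{-1}) \circ U_t$ sur $V^{\Nz'}$ pour $t \in T^+$, $n'' \in \Nz''$, où $U_t$ désigne l'opérateur de Hecke et $tn''t^{-1} \in \Nz''$ agit via $V$. En écrivant $U_t(v)=\sum_x (xt)\cdot v$ sur des représentants $x$ de $\Nz'/t\Nz't^{-1}$ et en utilisant $tn''=(tn''t^{-1})t$, cela résulte de ce que la conjugaison par $tn''t^{-1} \in \Nz''$ normalise à la fois $\Nz'$ et $t\Nz't^{-1}$ et permute donc ces classes ; c'est une vérification élémentaire. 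En restreignant au sous-groupe ouvert $T_0 \ltimes \Nz''$ on obtient une action de groupe, lisse d'après ce qui précède, donc $V^{\Nz'}$ est une représentation lisse de $\NT''$.

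\emph{Degré supérieur.} On choisit une résolution injective $V \hookrightarrow I^\bullet$ dans la catégorie des représentations lisses de $\NT$ sur $A$ (lemme \ref{lemm:mnd} et \cite[Proposition 2.1.11]{Em2}) ; exactement comme dans le lemme précédent, chaque $I^j$ est alors $\Nz'$-acyclique, donc $\Hc(\Nz',V)$ est calculé par $(I^\bullet)^{\Nz'}$, de façon compatible avec l'action de $\Nz''$ (c'est l'identification sous-jacente à la suite spectrale de Hochschild--Serre \eqref{HS}) et, par naturalité des morphismes de \eqref{hecke}, avec l'action de Hecke de $T^+$. D'après l'étape en degré $0$ appliquée à chaque $I^j$, le complexe $(I^\bullet)^{\Nz'}$ est formé de représentations lisses de $\NT''$ à différentielles $\NT''$-équivariantes ; comme cette catégorie est abélienne (lemme \ref{lemm:mnd}), sa cohomologie $\Hc(\Nz',V)$ est encore une représentation lisse de $\NT''$, munie précisément de l'action naturelle de $\Nz''$ et de l'action de Hecke de $T^+$. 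Le point délicat est la compatibilité en degré $0$, soit $U_t \circ n'' = (tn''t^{-1}) \circ U_t$ — le comptage de classes ci-dessus ; une fois celui-ci établi, tout le reste est une transcription de l'argument déjà donné pour $\Hc(\Nz,V)$.
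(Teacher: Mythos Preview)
Your proof is correct and follows essentially the same approach as the paper: establish the compatibility $t \h (n'' \cdot v) = (tn''t^{-1}) \cdot (t \h v)$ on $V^{\Nz'}$ in degree $0$ using that conjugation by $tn''t^{-1}$ stabilises $t\Nz't^{-1}$, then pass to higher degrees via an injective resolution of $V$ in the category of smooth $\NT$-representations (lemme~\ref{lemm:mnd} and \cite[Proposition 2.1.11]{Em2}). Your added remarks on why $T_0$ normalises $\Nz'$ and why the Hecke action of $T_0$ coincides with the natural action are a welcome elaboration of points the paper leaves implicit.
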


\begin{proof}
On montre d'abord le lemme en degré $0$. Il suffit de vérifier que l'action naturelle de $\Nz''$ et l'action de Hecke (notée $\h$) de $T^+$ sur $V^{\Nz'}$ sont compatibles : pour tous $t \in T^+, n'' \in \Nz'', v \in V^{\Nz'}$, on a
\begin{align*}
t \h (n'' \cdot v) &= \sum_{n' \in \Nz'/t\Nz't^{-1}} n'tn'' \cdot v \\
&= (tn''t^{-1}) \cdot \sum_{n' \in \Nz'/t\Nz't^{-1}} (tn''t^{-1})^{-1} n' (tn''t^{-1}) \cdot (t \cdot v) \\
&= (tn''t^{-1}) \cdot (t \h v),
\end{align*}
la dernière égalité résultant du fait que $t\Nz' t^{-1}$ est stable par conjugaison par $tn''t^{-1}$. En degré supérieur, on calcule les $A$-modules $\Hc(\Nz',V)$ à l'aide d'une résolution injective $V \hookrightarrow I^\bullet$ dans la catégorie des représentations lisses de $\NT$ sur $A$ (voir le lemme \ref{lemm:mnd} et \cite[Proposition 2.1.11]{Em2}) et on utilise le résultat en degré $0$ avec $(I^\bullet)^{\Nz'}$.
\end{proof}

Les foncteurs $\Hc(\Nz',-)$ forment donc un $\delta$-foncteur universel de la catégorie des représentations lisses de $\NT$ dans la catégorie des représentations lisses de $\NT''$.
On peut alors considérer l'action de Hecke de $t \in T^+$ (définie par \eqref{hecke} avec $\Nz''$ et $\Hc(\Nz',V)$ au lieu de $\Nz$ et $V$ respectivement) sur les $A$-modules $\Hc(\Nz'',\Hc(\Nz',V))$ : pour tous $i,j \in \Nbb$, c'est la composée
\begin{multline} \label{heckeHS}
\Hc[i](\Nz'',\Hc[j](\Nz',V)) \to \Hc[i](t\Nz''t^{-1},\Hc[j](t\Nz't^{-1},V)) \\
\to \Hc[i](t\Nz''t^{-1},\Hc[j](\Nz',V)) \to \Hc[i](\Nz'',\Hc[j](\Nz',V))
\end{multline}
où le premier morphisme est induit par l'action de $t$ sur $V$, le second par la corestriction de $t\Nz't^{-1}$ à $\Nz'$ et le dernier est la corestriction de $t\Nz''t^{-1}$ à $\Nz''$.
En degré $0$, le lemme suivant montre que l'on retrouve l'action de Hecke de $T^+$ définie par \eqref{hecke}.

\begin{lemm} \label{lemm:compcores}
L'action de Hecke de $t \in T^+$ sur $V^{\Nz}$ définie par \eqref{hecke} coïncide avec celle sur $(V^{\Nz'})^{\Nz''}$ définie par \eqref{heckeHS} avec $i=j=0$.
\end{lemm}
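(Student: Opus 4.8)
The strategy is to unwind both definitions of the Hecke action at the level of explicit coset representatives and compare them directly, using the compatibility of the semi-direct product decomposition $\Nz = \Nz'' \ltimes \Nz'$ with conjugation by $t \in T^+$. First I would recall that for a compact open subgroup $\Nz_0'$ of finite index in $\Nz'$ (here $\Nz_0' = t\Nz't^{-1}$), the corestriction $\Hc[0](\Nz_0',V) \to \Hc[0](\Nz',V)$ on invariants is given on $v \in V^{\Nz_0'}$ by $v \mapsto \sum_{n' \in \Nz'/\Nz_0'} n' \cdot v$, and likewise for $\Nz''$ and for $\Nz$ itself. So the action of $t$ on $V^{\Nz}$ via \eqref{hecke} sends $v \in V^{\Nz}$ to $\sum_{n \in \Nz/t\Nz t^{-1}} n \cdot (t \cdot v)$, whereas the action via \eqref{heckeHS} with $i=j=0$ first sends $v \in (V^{\Nz'})^{\Nz''}$ to $\sum_{n' \in \Nz'/t\Nz't^{-1}} n' \cdot (t \cdot v) \in \Hc[0](\Nz',V)$, viewed in $(V^{\Nz'})^{t\Nz''t^{-1}}$ via the $\Nz''$-action of the previous lemma, and then applies the corestriction $(V^{\Nz'})^{t\Nz''t^{-1}} \to (V^{\Nz'})^{\Nz''}$, namely $w \mapsto \sum_{n'' \in \Nz''/t\Nz''t^{-1}} n'' \cdot w$.

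The key combinatorial step is then to check that the map
\begin{equation*}
\Nz''/t\Nz''t^{-1} \times \Nz'/t\Nz't^{-1} \longrightarrow \Nz/t\Nz t^{-1}, \qquad (n'',n') \mapsto n'' n',
\end{equation*}
is a bijection, which follows from the split exact sequences
\begin{equation*}
1 \to \Nz' \to \Nz \to \Nz'' \to 1 \quad \text{and} \quad 1 \to t\Nz't^{-1} \to t\Nz t^{-1} \to t\Nz''t^{-1} \to 1
\end{equation*}
together with the fact that conjugation by $t$ respects the decomposition $\N = \N'' \ltimes \N'$ (from \eqref{prodNa}): choosing compatible splittings identifies both sides with a product of coset spaces. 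Using this bijection and the $\Nz''$-equivariance of the inner sum over $\Nz'/t\Nz't^{-1}$ — more precisely, for $n'' \in \Nz''$ one has $n'' \cdot \left( \sum_{n' \in \Nz'/t\Nz't^{-1}} n' \cdot (t\cdot v) \right) = \sum_{n' \in \Nz'/t\Nz't^{-1}} (n'' n' n''^{-1}) n'' \cdot (t\cdot v)$, and $n'' n' n''^{-1}$ runs over $\Nz'/n'' (t\Nz't^{-1}) n''^{-1}$ — one rewrites the double sum $\sum_{n''} n'' \cdot \sum_{n'} n' \cdot (t\cdot v)$ and matches it term by term with $\sum_{n \in \Nz/t\Nz t^{-1}} n\cdot(t\cdot v)$. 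Since $v$ is $\Nz''$-fixed, the reindexing of the inner cosets by conjugation causes no trouble; the two expressions coincide.

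The main obstacle is purely bookkeeping: one must be careful that the $\Nz''$-action on $\Hc[0](\Nz',V)$ used in \eqref{heckeHS} is exactly the one from the preceding lemma (conjugation on arguments together with the given action on $V$), and that after applying the first two maps in \eqref{heckeHS} the resulting element of $\Hc[0](\Nz',V) = V^{\Nz'}$ is genuinely $t\Nz''t^{-1}$-fixed, so that the final corestriction makes sense and is the one written above. Granting that, and the product decomposition of the coset spaces, the verification is a one-line reindexing. I expect no analytic input is needed here — everything reduces to the elementary behaviour of corestriction maps for finite-index inclusions of profinite groups and the semi-direct product structure already established via \eqref{prodNa} and Proposition \ref{prop:rad}.
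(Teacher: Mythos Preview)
Your proposal is correct and follows essentially the same approach as the paper: both arguments unwind the two Hecke actions as explicit coset sums, establish the bijection $\Nz''/t\Nz''t^{-1} \times \Nz'/t\Nz't^{-1} \to \Nz/t\Nz t^{-1}$ via $(n'',n') \mapsto n''n'$, and then match the double sum with the single sum. The paper verifies this bijection by a direct element-by-element check of injectivity and surjectivity rather than invoking the split exact sequences abstractly, but this is purely a difference of presentation.
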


\begin{proof}
Soit $t \in T^+$. Soient $(n''_j)_{j \in \llbrack 1,l \rrbrack}$ et $(n'_i)_{i \in \llbrack 1,k \rrbrack}$ des systèmes de représentants des classes à gauche $\Nz''/t\Nz''t^{-1}$ et $\Nz'/t\Nz't^{-1}$ respectivement. Le produit de ces représentants induit une bijection
\begin{equation*}
\left( \Nz''/t\Nz''t^{-1} \right) \times \left( \Nz'/t\Nz't^{-1} \right) \iso \Nz/t\Nz t^{-1}.
\end{equation*}

Montrons l'injectivité. Soient $j_1,j_2 \in \llbrack 1,l \rrbrack$ et $i_1,i_2 \in \llbrack 1,k \rrbrack$. Si $n''_{j_1} n'_{i_1} \in n''_{j_2} n'_{i_2} t\Nz t^{-1}$, alors $\exists n'' \in t\Nz''t^{-1}, \exists n' \in t\Nz't^{-1}$ tels que $n''_{j_1} n'_{i_1} = n''_{j_2} n'_{i_2} n'' n'$, puis $n''_{j_1} n'_{i_1} = (n''_{j_2} n'') (n''^{-1} n'_{i_2} n'' n')$ d'où $n''_{j_1} = n''_{j_2} n''$ d'une part, donc $j_1=j_2$ et $n''=1$ ; et $n'_{i_1} = n''^{-1} n'_{i_2} n'' n' = n'_{i_2} n'$ d'autre part, donc $i_1=i_2$.

Montrons la surjectivité. Soit $n \in \Nz$. Si $n=n''n'$ avec $n'' \in \Nz''$ et $n' \in \Nz'$, alors $\exists j \in \llbrack 1,l \rrbrack, \exists m'' \in t\Nz''t^{-1}$ tels que $n''=n''_j m''$, puis $\exists i \in \llbrack 1,k \rrbrack, \exists m' \in t\Nz't^{-1}$ tels que $m''n'm''^{-1}=n'_i m'$ ; ainsi $n''n' = n''_j n'_i m'm''$ donc $n \in n''_jn'_it\Nz t^{-1}$.

En utilisant ces systèmes de représentants, on voit que la corestriction de $t\Nz t^{-1}$ à $\Nz$ est la composée de celle de $t\Nz't^{-1}$ à $\Nz'$ et de celle de $t\Nz''t^{-1}$ à $\Nz''$ : pour tout $v \in V^{\Nz}$ on a
\begin{equation*}
\sum_{n \in \Nz/t\Nz t^{-1}} n \cdot (t \cdot v) = \sum_{n'' \in \Nz''/t\Nz''t^{-1}} n'' \cdot \sum_{n' \in \Nz'/t\Nz't^{-1}} n' \cdot (t \cdot v).
\end{equation*}
L'action de Hecke de $t$ sur $V^{\Nz}$ définie par \eqref{hecke} coïncide donc avec la composée
\begin{equation*}
\left( V^{\Nz'} \right)^{\Nz''} \to \left( V^{t\Nz't^{-1}} \right)^{t\Nz''t^{-1}} \to \left( V^{\Nz'} \right)^{t\Nz''t^{-1}} \to \left( V^{\Nz'} \right)^{\Nz''}
\end{equation*}
où le premier morphisme est induit par l'action de $t$ sur $V$, le second par la corestriction de $t\Nz't^{-1}$ à $\Nz'$ et le dernier est la corestriction de $t\Nz''t^{-1}$ à $\Nz''$ ; c'est-à-dire avec l'action de $t$ sur $(V^{\Nz'})^{\Nz''}$ définie par \eqref{heckeHS} avec $i=j=0$.
\end{proof}

En degré supérieur, la proposition suivante montre que la suite spectrale \eqref{HS} est compatible avec les actions de Hecke de $T^+$ définies sur $\Hc[i+j](\Nz,V)$ par \eqref{hecke} et sur $\Hc[i](\Nz'',\Hc[j](\Nz',V))$ par \eqref{heckeHS}.

\begin{prop} \label{prop:comphecke}
La suite spectrale \eqref{HS} est définie dans la catégorie des représentations lisses de $T^+$ sur $A$.
\end{prop}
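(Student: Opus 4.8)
Le plan est d'identifier \eqref{HS} à la suite spectrale de Grothendieck associée à la composition des deux foncteurs exacts à gauche $(-)^{\Nz'}$, de la catégorie des représentations lisses de $\NT$ sur $A$ vers celle des représentations lisses de $\NT''$ sur $A$, et $(-)^{\Nz''}$, de cette dernière vers la catégorie des représentations lisses de $T^+$ sur $A$. Leur composée est le foncteur $(-)^{\Nz}$, de manière compatible aux actions de Hecke de $T^+$ d'après le lemme \ref{lemm:compcores}. Comme ces trois catégories possèdent suffisamment d'injectifs (lemme \ref{lemm:mnd}), il ne restera qu'à vérifier que $(-)^{\Nz'}$ transforme tout objet injectif de la catégorie des représentations lisses de $\NT$ sur $A$ en un objet acyclique pour le foncteur $(-)^{\Nz''}$.

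Pour ce point, on partira d'un objet injectif $I$ de la catégorie des représentations lisses de $\NT$ sur $A$. Puisque les $A$-modules $\Hc(\Nz,-)$ et $\Hc(\Nz',-)$ se calculent à l'aide de résolutions injectives dans cette catégorie, $I$ est à la fois $\Nz$-acyclique et $\Nz'$-acyclique. En appliquant \eqref{HS} à $I$, tous les termes de la page $E_2$ situés hors de la ligne $j=0$ s'annulent ; la suite spectrale dégénère alors et fournit des isomorphismes $\Hc[i](\Nz'',I^{\Nz'}) \cong \Hc[i](\Nz,I) = 0$ pour tout entier $i>0$, d'où l'acyclicité cherchée de $I^{\Nz'}$ pour $(-)^{\Nz''}$ (l'annulation de ces $A$-modules ne dépendant pas de leur structure supplémentaire). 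Le théorème de Grothendieck fournira alors une suite spectrale dans la catégorie des représentations lisses de $T^+$ sur $A$, de terme $E_2^{i,j}$ le $A$-module $\Hc[i](\Nz'',\Hc[j](\Nz',V))$ et d'aboutissement $\Hc[i+j](\Nz,V)$. Il restera à vérifier que l'action de Hecke de $T^+$ qu'elle induit sur l'aboutissement est \eqref{hecke} — ce qui résultera de ce que le $\delta$-foncteur $\Hc(\Nz,-)$, muni de son action de Hecke, calcule les foncteurs dérivés de $(-)^{\Nz}$, et du lemme \ref{lemm:compcores} — et que celle induite sur la page $E_2$ est \eqref{heckeHS} — ce qui résultera d'un examen direct des définitions, le lemme \ref{lemm:compcores} en fournissant le cas de degré $0$.

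Enfin, il faudra s'assurer que cette suite spectrale de Grothendieck coïncide, après oubli de l'action de $T^+$, avec la suite spectrale de Hochschild-Serre \eqref{HS} de $A$-modules : cela découlera de ce que les foncteurs d'oubli vers les catégories de représentations lisses de $\Nz$, de $\Nz'$, de $\Nz''$ et vers celle des $A$-modules sont exacts et transforment toute résolution injective dans une catégorie de représentations de monoïde en une résolution par des objets acycliques pour les foncteurs de cohomologie concernés, de sorte que la construction de la suite spectrale commute à ces oublis. La principale difficulté résidera dans ce dernier point de compatibilité ainsi que dans l'identification explicite des actions de Hecke induites sur la page $E_2$ et sur l'aboutissement ; le reste relève de l'algèbre homologique formelle, reposant sur le lemme \ref{lemm:mnd} et sur les $\delta$-foncteurs universels déjà construits.
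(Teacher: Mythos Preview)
Your proposal is correct and follows essentially the same approach as the paper: construct the Grothendieck spectral sequence for the composite $(-)^{\Nz} = (-)^{\Nz''} \circ (-)^{\Nz'}$ in the categories of smooth monoid representations (using Lemma~\ref{lemm:mnd} and Lemma~\ref{lemm:compcores}), then identify it with \eqref{HS} via the exact, injective-preserving forgetful functors. The only tactical difference lies in the acyclicity step: the paper observes directly that the group-level functor $(-)^{\Nz'}$ preserves injectives (being right adjoint to the exact inflation), so $I^{\Nz'}$ is injective hence $\Nz''$-acyclic, whereas you deduce this acyclicity by applying the Hochschild--Serre spectral sequence \eqref{HS} to $I$ and using its degeneration --- both arguments are valid and equally short.
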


\begin{proof}
On note $\Mod(A)$ la catégorie des modules sur $A$ et $\Rep(-)$ les catégories de représentations lisses sur $A$. On note $\mathcal{F}$ et $\mathcal{\mathcal{G}}$ (resp. $\mathcal{F}'$ et $\mathcal{G}'$, $\mathcal{F}''$ et $\mathcal{G}''$) les foncteurs des $\Nz$-invariants (resp. $\Nz'$-invariants, $\Nz''$-invariants) et $\For$ les différents foncteurs d'oubli. On précise cela dans le diagramme de catégories et de foncteurs suivant.
\begin{equation*} \begin{tikzcd}[column sep=small]
\Rep(\NT) \rar{\For} \ar[bend left]{rrrr}{\mathcal{G}} \ar[bend right]{rrdd}{\mathcal{G}'} & \Rep(\Nz) \ar{rd}{\mathcal{F}'} \ar{rr}{\mathcal{F}} &&  \Mod(A) & \Rep(T^+) \lar{\For} \\
&& \Rep(\Nz'') \ar{ur}{\mathcal{F}''} && \\
&& \Rep(\NT'') \uar{\For} \ar[bend right]{rruu}{\mathcal{G}''} &&
\end{tikzcd} \end{equation*}
Ce diagramme est commutatif : le lemme \ref{lemm:compcores} montre que $\mathcal{G} = \mathcal{G}'' \circ \mathcal{G}'$ et les autres relations sont immédiates. Les foncteurs d'oubli sont exacts et préservent les injectifs (voir le lemme \ref{lemm:mnd} et \cite[Proposition 2.1.11]{Em2}). Ainsi pour tout $n \in \Nbb$, on a des isomorphismes de foncteurs
\begin{equation*}
\Rc[n]\mathcal{F} \circ \For \cong \Rc[n](\mathcal{F} \circ \For) \cong \Rc[n](\For \circ \mathcal{G}) \cong \For \circ \Rc[n]\mathcal{G}
\end{equation*}
avec $\Rc$ les foncteurs dérivés à droite. De même pour $\mathcal{F}',\mathcal{G}'$ et $\mathcal{F}'',\mathcal{G}''$. On en déduit pour tous $i,j \in \Nbb$, des isomorphismes de foncteurs
\begin{equation*}
\Rc[i]\mathcal{F}'' \circ \Rc[j]\mathcal{F}' \circ \For \cong \For \circ \Rc[i]\mathcal{G}'' \circ \Rc[j]\mathcal{G}'.
\end{equation*}
Par ailleurs si $I$ est un objet injectif de $\Rep(\NT)$, alors pour tout entier $n>0$ on a
\begin{equation*}
\For(\Rc[n]\mathcal{G}'' (\mathcal{G}'(I)))=\Rc[n]\mathcal{F}''(\mathcal{F}'(\For(I)))=0
\end{equation*}
car $\mathcal{F}'$ préserve les injectifs (c'est l'adjoint à droite du foncteur exact d'inflation). Ainsi $\mathcal{G}'$ envoie les injectifs sur des $\mathcal{G}''$-acycliques. On en déduit l'existence d'une suite spectrale de Grothendieck
\begin{equation*}
\Rc[i]\mathcal{G}''(\Rc[j]\mathcal{G}'(V)) \Rightarrow \Rc[i+j]\mathcal{G}(V)
\end{equation*}
et les isomorphismes de foncteurs précédents montrent que l'image de cette suite spectrale par $\For$ est précisément la suite spectrale de Grothendieck associée aux foncteurs $\mathcal{F}$, $\mathcal{F}'$ et $\mathcal{F}''$, c'est-à-dire la suite spectrale \eqref{HS}.
\end{proof}

\subsection{Calculs sur le gradué de la filtration} \label{sec:calc}

Soient $U$ une représentation lisse de $T(F)$ sur $A$ et $w \in W$. On suppose que $N_0$ est donné par le lemme \ref{lemm:compat} avec $c$ suffisamment grand pour que $p^c$ soit nul dans $A$. On note
\begin{equation*}
V_w \dfn \Clis_c(N_w(F),U)
\end{equation*}
la représentation lisse de $B(F)$ sur $A$ définie par l'isomorphisme \eqref{isoClis}.
Dans cette sous-section, on calcule la cohomologie de $N_0$ à valeurs dans $V_w$ et l'action de Hecke de $T^+$ sur les $A$-modules $\Hc(N_0,V_w)$.

\subsubsection*{Simplification préliminaire}

On fixe un élément $\varsigma \in S^+$ comme dans le lemme \ref{lemm:tau} (en procédant comme dans la preuve du lemme, un tel élément peut être choisi dans $S^+$ d'après la remarque \ref{rema:S}). Pour tout $k \in \Nbb$, on pose
\begin{equation*}
N_{w,k} \dfn N_w(F) \cap (\varsigma^{-k} N_0 \varsigma^k) \quad \text{et} \quad V_{w,k} \dfn \{f \in V_w ~|~ \supp(f) \subset N_{w,k}\}.
\end{equation*}
Par définition de $\varsigma$ on a $N_w(F) = \bigcup_{k \geq 0} \varsigma^{-k} \Nwz \varsigma^k$, d'où
\begin{equation} \label{union}
V_w = \bigcup_{k \geq 0} V_{w,k}.
\end{equation}
On déduit de l'isomorphisme \eqref{prodNa} que $N_{w_0w}(F) N_w(F) = N_0$, donc d'après la proposition \ref{prop:rad} on a $N_0 = (N_{w_0w}(F) \cap N_0) \Nwz$, d'où
\begin{equation} \label{BwN}
B^-(F)\w N_0 = B^-(F)\w\Nwz.
\end{equation}
On remarque enfin que l'on a un isomorphisme $\Nwz$-équivariant
\begin{equation} \label{isoVwz}
\Vwz \cong \Clis(\Nwz,U)
\end{equation}
où $\Nwz$ agit sur $\Clis(\Nwz,U)$ par translation à droite. Le sous-$A$-module $\Vwz$ de $V_w$ est stable par $N_0$ et $T^+$ : l'égalité \eqref{BwN} montre que $\Vwz$ est stable par $N_0$, tandis que la description de l'action de $T(F)$ sur $V_w$ donnée après l'isomorphisme \eqref{isoClis} montre que $\Vwz$ est stable par $T^+$. C'est donc une représentation lisse de $T^+ \ltimes N_0$ sur $A$ et de même pour le $A$-module quotient $V_w/\Vwz$.

\begin{lemm} \label{lemm:V0}
On a des suites exactes courtes de représentations lisses de $T^+$ sur $A$
\begin{equation*}
0 \to \Hc(N_0,\Vwz) \to \Hc(N_0,V_w) \to \Hc(N_0,V_w/\Vwz) \to 0
\end{equation*}
et l'action de Hecke de $\varsigma$ sur $\Hc(N_0,V_w/\Vwz)$ est localement nilpotente.
\end{lemm}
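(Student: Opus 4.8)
L'idée est d'appliquer le $\delta$-foncteur $\Hc(N_0,-)$, à valeurs dans les représentations lisses de $T^+$ sur $A$, à la suite exacte courte de représentations lisses de $T^+ \ltimes N_0$ sur $A$
\[
0 \longrightarrow \Vwz \longrightarrow V_w \longrightarrow V_w/\Vwz \longrightarrow 0 .
\]
L'énoncé revient alors à montrer : (a) que la longue suite exacte de cohomologie associée se scinde en suites exactes courtes ; (b) que l'action de Hecke de $\varsigma$ sur $\Hc(N_0,V_w/\Vwz)$ est localement nilpotente.

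Pour (a), le plan est de montrer que cette suite exacte courte est \emph{déjà scindée dans la catégorie des représentations lisses de $N_0$ sur $A$} (le scindage n'étant en général pas $T^+$-équivariant, ce qui est sans importance). Via l'isomorphisme \eqref{isoClis}, $V_w$ s'identifie à $\C_{C(w)}$ muni de l'action de $B(F)$ par translation à droite, et $\Vwz$ au sous-$A$-module des $f$ dont le support est contenu dans le sous-ensemble $\Omega \dfn B^-(F)\w\Nwz = B^-(F)\w N_0$ de $C(w)$ (la dernière égalité étant \eqref{BwN}). Or $\Omega$ est ouvert et fermé dans $C(w)$, invariant par translation à gauche par $B^-(F)$ et à droite par $N_0$ ; puisque $C(w) = B^-(F)\w B(F)$ est invariant par translation à droite par $B(F) \supset N_0$, son complémentaire $C(w)-\Omega$ l'est aussi. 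La multiplication par les fonctions caractéristiques $1_\Omega$ et $1_{C(w)-\Omega}$, qui sont localement constantes, $B^-(F)$-invariantes à gauche et $N_0$-invariantes à droite, fournit alors une décomposition $N_0$-équivariante $V_w = \Vwz \oplus V_w'$ avec $V_w' \cong V_w/\Vwz$. On en déduit $\Hc(N_0,V_w) \cong \Hc(N_0,\Vwz) \oplus \Hc(N_0,V_w/\Vwz)$, puis les suites exactes courtes voulues, leurs flèches étant celles induites par $\Vwz \hookrightarrow V_w \twoheadrightarrow V_w/\Vwz$ (donc $T^+$-équivariantes, par naturalité du $\delta$-foncteur).

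Pour (b), j'écrirais $V_w/\Vwz = \varinjlim_{k \geq 0} V_{w,k}/\Vwz$ d'après \eqref{union} (chaque $V_{w,k}$ contient $\Vwz = V_{w,0}$). La description de l'action de $T(F)$ sur $V_w$ donnée après \eqref{isoClis} et l'égalité $\varsigma^{-k}\Nwz\varsigma^k = N_{w,k}$ (immédiate car $\varsigma$ normalise $N_w(F)$) montrent que $V_{w,k} = \varsigma^{-k}\Vwz$ ; ce $A$-module est donc stable par le groupe $\varsigma^{-k}N_0\varsigma^k \supset N_0$. Comme $\Hc(N_0,-)$ commute aux limites inductives filtrantes (une cochaîne localement constante sur le compact $N_0^n$ prend ses valeurs dans un $V_{w,k}$), on obtient $\Hc(N_0,V_w/\Vwz) = \varinjlim_k \Hc(N_0,V_{w,k}/\Vwz)$. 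Enfin $\varsigma N_{w,k}\varsigma^{-1} = N_{w,k-1}$, donc l'action de $\varsigma$ sur $V_w$ envoie $V_{w,k}$ dans $V_{w,k-1}$ ; la corestriction étant fonctorielle en les coefficients, l'action de Hecke de $\varsigma$ envoie $\Hc(N_0,V_{w,k}/\Vwz)$ dans $\Hc(N_0,V_{w,k-1}/\Vwz)$, et comme $V_{w,0}/\Vwz = 0$ l'action de Hecke de $\varsigma^k$ y est nulle. D'où la nilpotence locale.

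L'étape la moins formelle est (a), et précisément le fait que $C(w)-\Omega$ soit invariant par translation à droite par $N_0$ tout entier : c'est là qu'intervient de façon essentielle l'égalité $B^-(F)\w N_0 = B^-(F)\w\Nwz$ de \eqref{BwN}, qui absorbe la partie de $N_0$ complémentaire de $\Nwz$. Le reste relève de vérifications standards.
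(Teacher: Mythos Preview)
Your proof is correct and follows essentially the same approach as the paper. For (a) you work in the model $\C_{C(w)}$ and split via $1_\Omega$, while the paper works in $\Clis_c(N_w(F),U)$ and splits via the characteristic function of $\Nwz$; these are the same retraction transported through the isomorphism \eqref{isoClis}. For (b) your filtered-colimit phrasing is equivalent to the paper's direct observation that any locally constant cochain on the compact $N_0^n$ has finite image in $V_w/\Vwz$, hence lies in some $V_{w,k}/\Vwz$ and is killed by the naive action of $\varsigma^k$, which already forces the Hecke action of $\varsigma^k$ on its class to vanish.
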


\begin{proof}
On a une suite exacte courte de représentations lisses de $N_0$ sur $A$
\begin{equation*}
0 \to \Vwz \to V_w \to V_w/\Vwz \to 0
\end{equation*}
qui est scindée : la multiplication par la fonction caractéristique de $\Nwz$ sur $N_w(F)$ induit une rétraction $A$-linéaire $V_w \to \Vwz$ et l'égalité \eqref{BwN} montre qu'elle est $N_0$-équivariante.
En prenant la cohomologie de $N_0$ à valeurs dans cette suite exacte, on obtient donc des suites exactes courtes de représentations lisses de $T^+$ sur $A$
\begin{equation*}
0 \to \Hc(N_0,\Vwz) \to \Hc(N_0,V_w) \to \Hc(N_0,V_w/\Vwz) \to 0.
\end{equation*}
Pour tout entier $k>0$, l'image de $V_{w,k}$ sous l'action de $\varsigma$ est incluse dans $V_{w,k-1}$. En utilisant l'égalité \eqref{union}, on en déduit que l'action de $\varsigma$ sur $V_w/\Vwz$ est localement nilpotente. Soient $n \in \Nbb$ et $\phi : N_0^n \to V_w/\Vwz$ une cochaîne. Comme $\phi$ est localement constante et $N_0^n$ est compact, $\im(\phi) \subset V_w/\Vwz$ est fini donc annulé par une puissance suffisamment grande de $\varsigma$. On en déduit que la classe de $\phi$ dans $\Hc[n](N_0,V_w/\Vwz)$ est annulée par l'action de Hecke de cette même puissance de $\varsigma$. Ainsi l'action de Hecke de $\varsigma$ sur $\Hc[n](N_0,V_w/\Vwz)$ est localement nilpotente.
\end{proof}

\subsubsection*{Calculs par récurrence}

On fixe une décomposition réduite $s_{\ell(w)} \dots s_1$ de $w$. Soit $k \in \llbrack 0,\ell(w) \rrbrack$. À travers la décomposition radicielle \eqref{rad} de $\Lie(N)$, le sous-groupe $\Ns{k}$ (défini par \eqref{Nw} avec $s_k \dots s_1$ au lieu de $w$) de $N$ correspond au sous-ensemble de $\Phi^+$ suivant
\begin{equation*}
\Phi_{s_k \dots s_1}^+ \dfn \{\alpha \in \Phi^+ ~|~ s_k \dots s_1 (\alpha) \in \Phi^+\}
\end{equation*}
au sens où son algèbre de Lie est la somme directe des sous-espaces propres des poids appartenant à $\Phi_{s_k \dots s_1}^+$.
On suppose $k<\ell(w)$ et soit $\alpha \in \Delta$ tel que $s_{k+1}=s_\alpha$. On déduit de \cite[Partie II, § 1.5]{Jan} que
\begin{equation*}
\Phi_{s_k \dots s_1}^+ = \Phi_{s_{k+1} \dots s_1}^+ \amalg \{s_1 \dots s_k(\alpha)\}.
\end{equation*}
Ainsi $\Ns{k+1}$ est un sous-groupe fermé de $\Ns{k}$ de codimension $1$, donc il est distingué dans $\Ns{k}$ d'après \cite[Chapitre IV, § 4, Corollaire 1.9]{DG}. En utilisant l'isomorphisme \eqref{prodNa}, on voit que le quotient
\begin{equation*}
\Ns{k}'' \dfn \Ns{k} / \Ns{k+1}
\end{equation*}
s'identifie au sous-groupe radiciel de $N$ correspondant à la racine $s_1 \dots s_k(\alpha)$. On a donc un produit semi-direct $\Ns{k} = \Ns{k}'' \ltimes \Ns{k+1}$. On note $\Nsz{k}$ et $\Nsz{k}''$ les intersections respectives de $\Ns{k}(F)$ et $\Ns{k}''(F)$ avec $N_0$. Ce sont des sous-groupes ouverts compacts standards de $\Ns{k}(F)$ et $\Ns{k}''(F)$ respectivement stables sous l'action par conjugaison de $T^+$. En utilisant la proposition \ref{prop:rad}, on voit que l'on a une suite exacte courte scindée de groupes topologiques
\begin{equation} \label{devk}
1 \to \Nsz{k+1} \to \Nsz{k} \to \Nsz{k}'' \to 1.
\end{equation}
En utilisant la proposition \ref{prop:comphecke} avec ce dévissage, on voit que l'on a une suite spectrale de représentations lisses de $T^+$ sur $A$
\begin{equation} \label{HSk}
\Hc[i](\Nsz{k}'',\Hc[j](\Nsz{k+1},\Vwz)) \Rightarrow \Hc[i+j](\Nsz{k},\Vwz).
\end{equation}

\begin{lemm} \label{lemm:H1}
Soient $k \in \llbrack 0,\ell(w) \rrbrack$ et $n \in \Nbb$. Si $n > [F:\Qp] \cdot (\ell(w)-k)$, alors $\Hc[n](\Nsz{k},\Vwz)=0$.
\end{lemm}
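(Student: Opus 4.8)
The plan is to argue by \emph{descending} induction on $k$, from $k=\ell(w)$ down to $k=0$, using at each stage the semidirect-product dévissage \eqref{devk} and the associated Hochschild--Serre spectral sequence \eqref{HSk}.

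The base case is $k=\ell(w)$. Here $\Ns{\ell(w)}=N_w$ by definition \eqref{Nw}, hence $\Nsz{\ell(w)}=\Nwz$, and the inequality to establish is simply $\Hc[n](\Nwz,\Vwz)=0$ for every $n>0$. This is immediate from the $\Nwz$-equivariant isomorphism \eqref{isoVwz}: it identifies $\Vwz$ with $\Clis(\Nwz,U)$ equipped with the right-translation action, which is $\Nwz$-acyclic by Lemma \ref{lemm:inj}. I view this base case as the real content of the lemma, since it is exactly the $\Nwz$-acyclicity of $\Vwz$ that makes the bound $[F:\Qp]\cdot(\ell(w)-k)$ so much better than the naive dimension bound $[F:\Qp]\cdot(d-k)$ that Lemma \ref{lemm:Hnul} would give for $\Hc(\Nsz{k},\Vwz)$.

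For the inductive step, fix $0\le k<\ell(w)$ and assume the statement for $k+1$. I would run the spectral sequence \eqref{HSk}
\begin{equation*}
E_2^{i,j}=\Hc[i](\Nsz{k}'',\Hc[j](\Nsz{k+1},\Vwz)) \Rightarrow \Hc[i+j](\Nsz{k},\Vwz)
\end{equation*}
and feed it two vanishing ranges. On the one hand, the induction hypothesis gives $\Hc[j](\Nsz{k+1},\Vwz)=0$, hence $E_2^{i,j}=0$, as soon as $j>[F:\Qp]\cdot(\ell(w)-k-1)$. On the other hand, $\Nsz{k}''$ is a compact open subgroup of the $F$-points of a one-dimensional root subgroup of $N$, so it has dimension $[F:\Qp]$ over $\Qp$, and Lemma \ref{lemm:Hnul} gives $E_2^{i,j}=0$ as soon as $i>[F:\Qp]$. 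If $i+j=n>[F:\Qp]\cdot(\ell(w)-k)=[F:\Qp]+[F:\Qp]\cdot(\ell(w)-k-1)$, then one of these two conditions holds, so $E_2^{i,j}=0$; a fortiori the subquotient $E_\infty^{i,j}$ vanishes, and convergence of the (first-quadrant) spectral sequence yields $\Hc[n](\Nsz{k},\Vwz)=0$. This completes the induction.

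I do not expect a genuine obstacle here: the dévissage \eqref{devk}, the spectral sequence \eqref{HSk}, and Lemmas \ref{lemm:inj} and \ref{lemm:Hnul} are all already in hand. The only points needing a little attention are that $\Nsz{k}''$ is truly one-dimensional over $F$ — which follows from the identification of $\Ns{k}''$ with a root subgroup recorded just before the statement — and the elementary check that the two half-plane vanishing regions of the $E_2$-page cover the antidiagonal $i+j=n$ whenever $n>[F:\Qp]\cdot(\ell(w)-k)$.
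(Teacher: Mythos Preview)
Your argument is correct and follows the paper's proof essentially line for line: descending induction on $k$, the base case via the isomorphism \eqref{isoVwz} and Lemma \ref{lemm:inj}, and the inductive step via the spectral sequence \eqref{HSk} combined with the dimension bound of Lemma \ref{lemm:Hnul} for $\Nsz{k}''$ and the induction hypothesis for $\Nsz{k+1}$. The paper phrases the dichotomy on the antidiagonal $i+j=n$ as ``either $i>[F:\Qp]$ or else $j>[F:\Qp]\cdot(\ell(w)-(k+1))$'', which is exactly your cover-the-antidiagonal check.
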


\begin{proof}
On suppose $n > [F:\Qp] \cdot (\ell(w)-k)$ et on procède par récurrence décroissante sur $k$. On suppose $k=\ell(w)$, donc $n>0$. En utilisant le lemme \ref{lemm:inj}, on déduit de l'isomorphisme \eqref{isoVwz} que $\Vwz$ est $\Nwz$-acyclique, donc $\Hc[n](\Nwz,\Vwz)=0$.

On suppose $k<\ell(w)$ et le lemme vrai pour $k+1$. Soient $i,j \in \Nbb$ tels que $i+j=n$. Si $i > [F:\Qp]$, alors $\Hc[i](\Nsz{k}'',\Hc[j](\Nsz{k+1},\Vwz))=0$ d'après le lemme \ref{lemm:Hnul}. Sinon $j > [F:\Qp] \cdot (\ell(w)-(k+1))$, donc $\Hc[j](\Nsz{k+1},\Vwz)=0$ par hypothèse de récurrence. Dans les deux cas, on en conclut que
\begin{equation*}
\Hc[i](\Nsz{k}'',\Hc[j](\Nsz{k+1},\Vwz))=0.
\end{equation*}
En utilisant la suite spectrale \eqref{HSk}, on en déduit que $\Hc[n](\Nsz{k},\Vwz)=0$. Donc le lemme est vrai pour $k$.
\end{proof}

On note $U^w$ la représentation lisse de $T(F)$ sur $A$ dont le $A$-module sous-jacent est $U$ et sur lequel $t \in T(F)$ agit à travers $\w t \w^{-1}$. Pour tout $k \in \llbrack 0,\ell(w) \rrbrack$, on définit un caractère algébrique de $T$ en posant
\begin{equation*}
\alpha_k \dfn \sum_{\alpha \in \Phi^+_{s_k\dots s_1} - \Phi^+_w} \alpha.
\end{equation*}

\begin{lemm} \label{lemm:H2}
Soient $k \in \llbrack 0,\ell(w) \rrbrack$ et $n \in \Nbb$. On suppose ou bien $A=\ke$ et $U$ de dimension $1$, ou bien $n \leq 1$. Si $n = [F:\Qp] \cdot (\ell(w)-k)$, alors on a un isomorphisme $T^+$-équivariant
\begin{equation*}
\Hc[n](\Nsz{k},\Vwz) \cong U^w \otimes (\oma_k)
\end{equation*}
Si de plus $k>0$, alors l'action naturelle de $\Nsz{k-1}''$ sur $\Hc[n](\Nsz{k},\Vwz)$ est triviale.
\end{lemm}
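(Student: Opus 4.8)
On procède par récurrence décroissante sur $k$, en établissant à chaque étape à la fois l'isomorphisme $T^+$-équivariant de l'énoncé et, lorsque $k>0$, la trivialité de l'action de $\Nsz{k-1}''$ ; on remarque que l'hypothèse au cran $k$ entraîne celle au cran $k+1$, car si $[F:\Qp](\ell(w)-k)\le1$ alors $[F:\Qp](\ell(w)-k-1)\le0$.

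Pour le cas de base $k=\ell(w)$, on a $n=0$ et $\Phi^+_{s_{\ell(w)}\dots s_1}=\Phi^+_w$, donc $\alpha_{\ell(w)}=0$. D'après l'isomorphisme \eqref{isoVwz}, $\Hc[0](\Nwz,\Vwz)=(\Vwz)^{\Nwz}$ est le $A$-module des fonctions constantes $\Nwz\to U$, donc isomorphe à $U$ comme $A$-module ; le fait que l'action de Hecke en fasse $U^w$ résulte de la description de l'action de $T(F)$ donnée après l'isomorphisme \eqref{isoClis} et d'un examen de \eqref{hecke} en degré $0$ — les translatées à droite intervenant dans la corestriction se recombinent, sans coefficient, en $1_{\Nwz}$. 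Pour la trivialité, $\Nsz{\ell(w)-1}''\subset\Nsz{\ell(w)-1}$ normalise $\Nwz=\Nsz{\ell(w)}$ et agit donc sur $(\Vwz)^{\Nwz}$ via l'action de conjugaison sur l'argument d'une fonction, qui permute $\Nwz$ et fixe par conséquent toute fonction constante.

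Pour l'étape de récurrence avec $k<\ell(w)$, on écrit $s_{k+1}=s_\alpha$ avec $\alpha\in\Delta$, on pose $\mu_k\dfn s_1\dots s_k(\alpha)$ (la racine de $\Nsz{k}''$) et $j_0\dfn n-[F:\Qp]$. On considère la suite spectrale de Hochschild--Serre \eqref{HSk} associée au dévissage \eqref{devk}, qui vit dans la catégorie des représentations lisses de $T^+$ sur $A$ d'après la proposition \ref{prop:comphecke}. Comme $\dim\Nsz{k}''=[F:\Qp]$, le lemme \ref{lemm:Hnul} annule $\Hc[i](\Nsz{k}'',-)$ pour $i>[F:\Qp]$ et le lemme \ref{lemm:H1} appliqué à $\Nsz{k+1}$ annule $\Hc[j](\Nsz{k+1},\Vwz)$ pour $j>j_0$ ; en degré total $n=[F:\Qp](\ell(w)-k)$, la page $E_2$ se réduit donc au terme de coin $(i,j)=([F:\Qp],j_0)$, lequel, étant situé sur le bord du premier quadrant, survit à $E_\infty$. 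On obtient ainsi un isomorphisme $T^+$-équivariant
\begin{equation*}
\Hc[n](\Nsz{k},\Vwz)\cong\mathrm{H}^{[F:\Qp]}\!\left(\Nsz{k}'',\Hc[j_0](\Nsz{k+1},\Vwz)\right).
\end{equation*}
Par l'hypothèse de récurrence au cran $k+1$ (applicable car $k+1>0$), le module des coefficients est $U^w\otimes(\oma_{k+1})$ et l'action de $\Nsz{k}''$ y est triviale ; la proposition \ref{prop:dimN} en degré $[F:\Qp]=\dim\Nsz{k}''$ (les coinvariants étant alors le module tout entier) donne $\Hc[n](\Nsz{k},\Vwz)\cong U^w\otimes(\oma_{k+1})\otimes(\omega^{-1}\circ\mu_k)$. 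Enfin, la décomposition $\Phi^+_{s_k\dots s_1}=\Phi^+_{s_{k+1}\dots s_1}\amalg\{\mu_k\}$ rappelée plus haut, jointe à $\mu_k\notin\Phi^+_w$ (car $w(\mu_k)=s_{\ell(w)}\dots s_{k+1}(\alpha)<0$, la décomposition $s_{\ell(w)}\dots s_1$ étant réduite), donne $\alpha_k=\alpha_{k+1}+\mu_k$, d'où $\Hc[n](\Nsz{k},\Vwz)\cong U^w\otimes(\oma_k)$.

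Il reste, pour $k>0$, à prouver que $\Nsz{k-1}''$ agit trivialement sur $\Hc[n](\Nsz{k},\Vwz)$, et c'est là que réside la difficulté principale. Si $A=\ke$ et $\dim_\ke U=1$, alors $\Hc[n](\Nsz{k},\Vwz)$ est de dimension $1$ sur $\ke$, de sorte que l'action lisse du pro-$p$-groupe $\Nsz{k-1}''$ se fait par un caractère à valeurs dans $\ke^\times$, donc trivialement puisque $\ke^\times$ est d'ordre premier à $p$. Si en revanche $n\le1$, le seul cas non encore traité est $n=1$, c'est-à-dire $F=\Qp$ et $k=\ell(w)-1$ : ici $\Nsz{\ell(w)-2}''$ ne normalise pas nécessairement $\Nwz$, de sorte que l'on ne peut pas déduire la trivialité de l'isomorphisme d'inflation $\Hc[1](\Nsz{\ell(w)-1},\Vwz)\cong\Hc[1](\Nsz{\ell(w)-1}'',U^w)$ ; il faut alors procéder à une vérification directe sur le modèle explicite $\Vwz\cong\Clis(\Nwz,U)$, en utilisant notamment que $\Nsz{\ell(w)-2}''$ normalise $\Nsz{\ell(w)-1}$ et que $[\Nsz{\ell(w)-2}'',\Nwz]\subset\Nsz{\ell(w)-1}$, pour conclure que l'endomorphisme induit de $\Hc[1](\Nsz{\ell(w)-1},\Vwz)$ est l'identité.
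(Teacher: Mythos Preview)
Your argument is correct and follows the paper up to the last paragraph, but there it has a genuine gap: in the case $n=1$ (i.e.\ $F=\Qp$, $k=\ell(w)-1$) you do not actually prove that $\Nsz{\ell(w)-2}''$ acts trivially. You write ``il faut alors procéder à une vérification directe'' and list two facts --- that $\Nsz{\ell(w)-2}''$ normalises $\Nsz{\ell(w)-1}$ and that $[\Nsz{\ell(w)-2}'',\Nwz]\subset\Nsz{\ell(w)-1}$ --- but neither of these suffices. What is really needed is that for $n''\in\Nsz{\ell(w)-2}''$ and $n\in\Nsz{\ell(w)-1}$, the images $[n''^{-1}nn'']$ and $[n]$ in the quotient $\Nsz{\ell(w)-1}'' \cong \Nsz{\ell(w)-1}/\Nwz$ are mapped to the same value by every continuous homomorphism $\phi:\Nsz{\ell(w)-1}''\to U^w$. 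Your commutator inclusion goes in the wrong direction for this: it would require $[\Nsz{\ell(w)-2}'',\Nsz{\ell(w)-1}]\subset\Nwz$, which is false in general.

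The paper closes this gap by invoking the specific choice of $N_0$ made at the start of the subsection (via the lemme \ref{lemm:compat}), which guarantees $\log(n''^{-1}nn'')\equiv\log n\pmod{p^c}$ in $\Lie(N_0)$. Since $\Nsz{\ell(w)-1}''$ is abelian, $\phi\circ\exp$ is $\Zp$-linear, and since $p^c=0$ in $A$ by hypothesis, this congruence forces $\phi([n''^{-1}nn''])=\phi([n])$. This is the missing ingredient in your proof; without it, the verification you announce cannot be carried out, and the lemma remains unproved in the case $n=1$.
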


\begin{proof}
On suppose $n=[F:\Qp] \cdot (\ell(w)-k)$ et on procède par récurrence décroissante sur $k$.
On suppose $k=\ell(w)$, donc $n=0$. L'évaluation en $1 \in N_w(F)$ induit un isomorphisme $A$-linéaire $\Vwz^{\Nwz} \iso U$. On calcule l'action de Hecke (notée $\h$) de $T^+$ sur $\Vwz^{\Nwz}$ à travers cet isomorphisme : pour tous $t \in T^+, f \in \Vwz^{\Nwz}$, on a
\begin{align*}
(t \h f)(1) &= \sum_{n \in \Nwz/t\Nwz t^{-1}} (nt f) (1) \\
&= \sum_{n \in \Nwz/t\Nwz t^{-1}} (\w t\w^{-1}) \cdot f(t^{-1}nt) \\
&= (\w t\w^{-1}) \cdot f(1),
\end{align*}
la dernière égalité résultant du fait que $t^{-1}nt \in \Nwz$ si et seulement si $n \in t \Nwz t^{-1}$. On a donc un isomorphisme $T^+$-équivariant $\Vwz^{\Nwz} \cong U^w$ qui est bien celui de l'énoncé car $\alpha_{\ell(w)}=0$. De plus on voit grâce à l'isomorphisme \eqref{isoClis} et l'égalité \eqref{BwN} que $\Vwz^{\Nwz}=\Vwz^{N_0}$ et on en déduit que si $\ell(w)>0$, alors l'action de $\Nsz{\ell(w)-1}''$ sur $\Vwz^{\Nwz}$ est triviale.

On suppose $k<\ell(w)$ et le lemme vrai pour $k+1$. Soient $i,j \in \Nbb$ tels que $i+j=n$. Si $i > [F:\Qp]$, alors $\Hc[i](\Nsz{k}'',\Hc[j](\Nsz{k+1},\Vwz)) = 0$ d'après le lemme \ref{lemm:Hnul}. Si $j > [F:\Qp] \cdot (\ell(w)-(k+1))$, alors $\Hc[j](\Nsz{k+1},\Vwz)=0$ d'après le lemme \ref{lemm:H1}. En utilisant la suite spectrale \eqref{HSk}, on en déduit un isomorphisme $T^+$-équivariant
\begin{equation*}
\Hc[n](\Nsz{k},\Vwz) \cong \Hc[i](\Nsz{k}'',\Hc[j](\Nsz{k+1},\Vwz))
\end{equation*}
avec $i = [F:\Qp]$ et $j = [F:\Qp] \cdot (\ell(w)-(k+1))$. Dans ce cas par hypothèse de récurrence (si $n \leq 1$, alors $j \leq 1$), on a un isomorphisme $T^+$-équivariant
\begin{equation*}
\Hc[j](\Nsz{k+1},\Vwz) \cong U^w \otimes (\oma_{k+1})
\end{equation*}
et $\Nsz{k}''$ agit trivialement sur $\Hc[j](\Nsz{k+1},\Vwz)$. En utilisant la proposition \ref{prop:dimN}, on en déduit un isomorphisme $T^+$-équivariant
\begin{equation*}
\Hc[n](\Nsz{k},\Vwz) \cong U^w \otimes (\omega^{-1} \circ (\alpha_{k+1}+\alpha))
\end{equation*}
avec $\alpha$ le caractère de la représentation adjointe de $T$ sur $\Lie(\Ns{k}'')$. On conclut en utilisant le fait que $\alpha_{k+1} + \alpha = \alpha_k$.

Si $k>0$, alors il reste à montrer que $\Nsz{k-1}''$ agit trivialement sur $\Hc[n](\Nsz{k},\Vwz)$. Dans ce cas on utilise l'hypothèse de l'énoncé : ou bien $A=\ke$ et $U$ est de dimension $1$, auquel cas $\Hc[n](\Nsz{k},\Vwz)$ est un $\ke$-espace vectoriel de dimension $1$, donc l'action lisse du pro-$p$-groupe $\Nsz{k-1}''$ est triviale, ou bien $n \leq 1$. On suppose $n=1$ (le cas $n=0$ ayant déjà été traité), donc $F=\Qp$ et $k=\ell(w)-1$. En utilisant le lemme \ref{lemm:H1} et la suite exacte \eqref{HSSE} avec $n=1$ et le dévissage \eqref{devk}, on voit que l'inflation induit un isomorphisme $A$-linéaire
\begin{equation*}
\Hc[1](\Nsz{k}'',\Vwz^{\Nwz}) \iso \Hc[1](\Nsz{k},\Vwz).
\end{equation*}
On note $[-] : \Nsz{k} \twoheadrightarrow \Nsz{k}''$ et $[-] : \Lie(\Nsz{k}) \twoheadrightarrow \Lie(\Nsz{k}'')$ les applications quotient, de sorte que $\log([n])=[\log(n)]$ pour tout $n \in \Nsz{k}$. Soient $\phi : \Nsz{k}'' \to \Vwz^{\Nwz}$ un cocycle, donc un morphisme de groupes continu (car $\Nsz{k}''$ agit trivialement sur $\Vwz^{\Nwz}$) et $\widetilde{\phi} : \Nsz{k} \to \Vwz$ le cocycle obtenu par inflation. Soient $n'' \in \Nsz{k-1}''$ et $n \in \Nsz{k}$. Alors
\begin{equation*}
(n'' \tilde{\phi})(n) = n'' \cdot \tilde{\phi}(n''^{-1}nn'') = n'' \cdot \phi([n''^{-1}nn'']) = \phi([n''^{-1}nn'']),
\end{equation*}
la dernière égalité résultant du fait que $\im \phi \subset \Vwz^{\Nwz}=\Vwz^{N_0}$, puis
\begin{equation*}
\phi([n''^{-1}nn'']) = (\phi \circ \exp)(\log [n''^{-1}nn'']) = (\phi \circ \exp)([\log (n''^{-1}nn'')]).
\end{equation*}
Comme $\Nsz{k}''$ est abélien, $\exp$ est aussi un morphisme de groupes continu, donc $\phi \circ \exp$ est $\Zp$-linéaire. Or $N_0$ étant donné par le lemme \ref{lemm:compat}, on a
\begin{equation*}
\log (n''^{-1}nn'') \equiv \log n \pmod{p^c}
\end{equation*}
d'où, par $\Zp$-linéarité et puisque $p^c=0$ dans $A$ par hypothèse, l'égalité
\begin{equation*}
\phi([n''^{-1}nn'']) = (\phi \circ \exp)([\log n]) = \phi([n]).
\end{equation*}
Ainsi $n''\tilde{\phi}=\tilde{\phi}$ et l'action de $\Nsz{k-1}''$ sur $H^1(\Nsz{k},\Vwz)$ est triviale. Donc le lemme est vrai pour $k$.
\end{proof}

\begin{lemm} \label{lemm:H3}
Soient $k \in \llbrack 0,\ell(w) \rrbrack$ et $n \in \Nbb$. On suppose ou bien $A=\ke$ et $U$ de dimension $1$, ou bien $n \leq 1$. Si $n < [F:\Qp] \cdot (\ell(w)-k)$, alors l'action de Hecke de $\varsigma$ sur $\Hc[n](\Nsz{k},\Vwz)$ est nilpotente.
\end{lemm}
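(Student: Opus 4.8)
Le plan est de procéder par récurrence décroissante sur $k$, après les lemmes \ref{lemm:H1} et \ref{lemm:H2}. Pour $k=\ell(w)$ la condition $n<[F:\Qp]\cdot(\ell(w)-k)=0$ est vide, donc il n'y a rien à démontrer. Pour l'étape de récurrence, on fixe $k<\ell(w)$ et $n<[F:\Qp]\cdot(\ell(w)-k)$ et on exploite le dévissage \eqref{devk} via la suite spectrale de Hochschild--Serre \eqref{HSk}. D'après la proposition \ref{prop:comphecke} c'est une suite spectrale de représentations lisses de $T^+$ sur $A$, donc $\Hc[n](\Nsz{k},\Vwz)$ est muni d'une filtration dont les gradués sont, de façon $T^+$-équivariante, des sous-quotients des termes $\Hc[i](\Nsz{k}'',\Hc[j](\Nsz{k+1},\Vwz))$ avec $i+j=n$ et $0\leq i\leq[F:\Qp]=\dim\Nsz{k}''$ (lemme \ref{lemm:Hnul}) ; cette filtration est finie car on a aussi $i\leq n$. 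Comme une filtration finie dont chaque gradué est tué par une puissance de l'action de Hecke de $\varsigma$ est elle-même tuée par une puissance de $\varsigma$, il suffit de montrer que $\varsigma$ agit de façon nilpotente sur chacun de ces termes $E_2$, et l'argument se scinde alors en trois cas selon la position de $j$ par rapport à $[F:\Qp]\cdot(\ell(w)-k-1)$.

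On pose $M\dfn\Hc[j](\Nsz{k+1},\Vwz)$, qui est une représentation lisse de $T^+\ltimes\Nsz{k}''$ sur $A$. Si $j>[F:\Qp]\cdot(\ell(w)-k-1)$, alors $M=0$ d'après le lemme \ref{lemm:H1} au rang $k+1$, donc le terme est nul. Si $j<[F:\Qp]\cdot(\ell(w)-k-1)$, alors l'action de Hecke de $\varsigma$ sur $M$ est nilpotente par hypothèse de récurrence (les hypothèses du lemme étant automatiquement vérifiées au rang $k+1$ pour le degré $j\leq n$). On note alors $M_m$ le noyau de la $m$-ième puissance de l'action de Hecke de $\varsigma$ sur $M$ ; en utilisant la compatibilité $\varsigma \h (n''\cdot v)=(\varsigma n''\varsigma^{-1})\cdot(\varsigma \h v)$ et l'inclusion $\varsigma\Nsz{k}''\varsigma^{-1}\subset\Nsz{k}''$, chaque $M_m$ est une sous-$T^+\ltimes\Nsz{k}''$-représentation de $M$, on a $M=\bigcup_m M_m$, et $\varsigma$ agit par $0$ sur la représentation de coefficients $M_m/M_{m-1}$ (puisque $\varsigma M_m\subset M_{m-1}$). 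Comme le premier morphisme de \eqref{hecke} est induit par l'action de $\varsigma$ sur les coefficients, $\varsigma$ agit par $0$ sur $\Hc[i](\Nsz{k}'',M_m/M_{m-1})$ ; en injectant $0\to M_{m-1}\to M_m\to M_m/M_{m-1}\to0$ dans les longues suites exactes $T^+$-équivariantes du $\delta$-foncteur $\Hc(\Nsz{k}'',-)$ et en raisonnant par récurrence sur $m$ (avec $M_0=0$), on obtient la nilpotence de $\varsigma$ sur $\Hc[i](\Nsz{k}'',M_m)$. Comme $\Hc(\Nsz{k}'',-)$ commute à la colimite filtrante $M=\bigcup_m M_m$, on conclut pour $\Hc[i](\Nsz{k}'',M)$ (de façon uniforme, $M$ lui-même étant déjà uniformément annulé : par finitude de la cohomologie lorsque $A=\ke$ et $U$ est de dimension $1$, et par hypothèse de récurrence lorsque $n\leq1$).

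Reste le cas $j=[F:\Qp]\cdot(\ell(w)-k-1)$, où $i=n-j<[F:\Qp]$ ; en posant $d\dfn[F:\Qp]$, le lemme \ref{lemm:H2} au rang $k+1$ donne $M\cong U^w\otimes(\oma_{k+1})$ avec l'action naturelle de $\Nsz{k}''$ triviale. Ici $\Nsz{k}''$ est un $\Zp$-réseau de rang $d$ sur lequel $\varsigma$ agit par conjugaison via la multiplication par $\beta(\varsigma)$, où $\beta\dfn s_1\cdots s_k(\alpha)$, dont la valuation $v\dfn\val(\beta(\varsigma))$ est un entier $\geq1$, de sorte que $\varsigma\Nsz{k}''\varsigma^{-1}=p^v\Nsz{k}''$. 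Pour l'action triviale, la cohomologie de $\Nsz{k}''$ se calcule par un complexe de Koszul à différentielle nulle, $\Hc[i](\Nsz{k}'',M)\cong\bigwedge^i_A(A^d)\otimes_A M$, et en se ramenant par Künneth au cas $\Nsz{k}''=\Zp$ — où la corestriction de $p^v\Zp$ à $\Zp$ est la multiplication par $p^v$ en degré $0$ et un isomorphisme en degré $1$ — on calcule que l'action de Hecke de $\varsigma$ sur $\Hc[i](\Nsz{k}'',M)$ est $p^{v(d-i)}$ fois un automorphisme (automorphisme car $\varsigma$ agit de façon inversible sur $M$). Comme $i<d$ impose $v(d-i)\geq v\geq1$ et que $p$ est nilpotent dans $A$, cet opérateur est nilpotent, ce qui achève la récurrence.

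L'obstacle principal est ce dernier calcul : il faut faire apparaître explicitement le facteur $p^{v(d-i)}$ porté par la corestriction $\Hc[i](\varsigma\Nsz{k}''\varsigma^{-1},M)\to\Hc[i](\Nsz{k}'',M)$ en degré sous-maximal — ce facteur, absent seulement en degré maximal $i=d$ (où s'applique la proposition \ref{prop:dimN}, qui fournit alors un vrai caractère), étant précisément le mécanisme par lequel l'élément contractant $\varsigma$ annule la cohomologie de $\Nsz{k}$ en dessous de son degré maximal.
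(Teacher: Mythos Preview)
Your proof is correct and follows the same overall strategy as the paper's: descending induction on $k$ via the spectral sequence \eqref{HSk}, with the trichotomy on $j$ and the crucial use of the triviality of the $\Nsz{k}''$-action (lemme \ref{lemm:H2}) in the critical case $j=[F:\Qp]\cdot(\ell(w)-k-1)$.

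The one genuine difference lies in the treatment of this critical case. The paper decomposes $\Res\Ns{k}''\cong\N_1\times\dots\times\N_{[F:\Qp]}$ and runs a secondary induction on $l$ using the short exact sequences \eqref{HSSE} for $\Plz{l}=\N_{1,0}\times\dots\times\N_{l,0}$: at each step one picks up a factor $|\alpha(\varsigma)|_p^{-1}$ on the degree-$0$ piece (via the index), which is a positive power of $p$ and hence nilpotent in $A$. Your argument packages this same computation via the Koszul description $\Hc[i](\Nsz{k}'',M)\cong\bigwedge^i_A(A^d)\otimes_A M$ and Künneth, making the global factor $p^{v(d-i)}$ explicit in one stroke. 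Both routes exploit the same mechanism (corestriction in degree $0$ contributes the index, corestriction in top degree is an isomorphism), and under the hypotheses of the lemma (either $A=\ke$, or $n\leq 1$ so that only $i\in\{0,1\}$ occurs) there is no issue with Tor terms in Künneth. Your approach is a bit more direct; the paper's iterated dévissage stays closer to the framework already set up (in particular the suites exactes \eqref{HSSE} and the action de Hecke via the remarque \ref{rema:heckeS}).

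A minor remark: in the middle case $j<[F:\Qp]\cdot(\ell(w)-k-1)$, your filtration $(M_m)$ is more explicit than necessary. Since the induction hypothesis gives \emph{nilpotence} (not merely local nilpotence) of $\varsigma$ on $M$, and since the Hecke action of $\varsigma$ on $\Hc[i](\Nsz{k}'',M)$ factors through the Hecke action of $\varsigma$ on the coefficients $M$ (by \eqref{heckeHS}), one sees directly that $\varsigma^m=0$ on $M$ implies $\varsigma^m=0$ on $\Hc[i](\Nsz{k}'',M)$ --- which is essentially what the paper asserts in one line.
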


\begin{proof}
On suppose $n < [F:\Qp] \cdot (\ell(w)-k)$ et on procède par récurrence décroissante sur $k$. Pour $k=\ell(w)$ il n'y a rien à vérifier. On suppose $k<\ell(w)$ et le lemme vrai pour $k+1$.

Soient $i,j \in \Nbb$ tels que $i+j=n$. Si $j < [F:\Qp] \cdot (\ell(w)-(k+1))$, alors l'action Hecke de $\varsigma$ sur $\Hc[j](\Nsz{k+1},\Vwz)$ est nilpotente par hypothèse de récurrence (si $n \leq 1$, alors $j \leq 1$). Si $j > [F:\Qp] \cdot (\ell(w)-(k+1))$, alors $\Hc[j](\Nsz{k+1},\Vwz)=0$ d'après le lemme \ref{lemm:H1}. Dans les deux cas, on en conclut que l'action de Hecke de $\varsigma$ sur $\Hc[i](\Nsz{k}'',\Hc[j](\Nsz{k+1},\Vwz))$ est nilpotente. On suppose $j = [F:\Qp] \cdot (\ell(w)-(k+1))$, donc $i <[F:\Qp]$.

Comme $\Res \Ns{k}''$ est unipotent et commutatif, il est isomorphe au produit direct de $[F:\Qp]$ copies du groupe additif sur $\Qp$ :
\begin{equation} \label{devl}
\Res \Ns{k}'' \cong \N_1 \times \dots \times \N_{[F:\Qp]}.
\end{equation}
En utilisant la remarque \ref{rema:S}, on voit que cette décomposition est stable sous l'action par conjugaison de $S$.
Pour tout $l \in \llbrack 1,[F:\Qp] \rrbrack$, on note $\N_{l,0}$ l'intersection de $\N_l(\Qp)$ avec $N_0$ et on pose
\begin{equation*}
\Plz{l} \dfn \N_{1,0} \times \dots \times \N_{l,0} \quad \text{et} \quad \V \dfn \Hc[j](\Nsz{k+1},\Vwz).
\end{equation*}

Soit $l \in \llbrack 1,[F:\Qp] \rrbrack$. On montre que si $i<l$, alors l'action de Hecke de $\varsigma$ sur $\Hc[i](\Plz{l},\V)$ définie dans la remarque \ref{rema:heckeS} est nilpotente.
On suppose $i=0$ et $l>0$. L'action de $\Plz{l}$ sur $\V$ étant triviale d'après le lemme \ref{lemm:H2}, l'action de Hecke (notée $\h$) de $\varsigma$ sur $v \in \V^{\Plz{l}}$ est donnée par
\begin{equation*}
\varsigma \h v = \left( \Plz{l} : \varsigma \Plz{l} \varsigma^{-1} \right) \left( \varsigma \cdot  v \right) = |\alpha(\varsigma)|_p^{-l} \left( \varsigma \cdot  v \right)
\end{equation*}
avec $\alpha$ le caractère de la représentation adjointe de $T$ sur $\Lie(\Ns{k}'')$. Comme $A$ est artinien et $\val(\alpha(\varsigma))>0$ par définition, cette action est nilpotente.

On procède par récurrence sur $l$. Le cas $i=0$ (en particulier lorsque $l=1$) a déjà été traité. On suppose donc $0<i<l$ et le résultat vrai pour $l-1$. En utilisant la suite exacte \eqref{HSSE} et la remarque \ref{rema:heckeS}, on voit que la décomposition \eqref{devl} induit une suite exacte courte de représentations lisses de $S^+$ sur $A$
\begin{equation*}
 0 \to \Hc[1](\N_{l,0},\Hc[i-1](\Plz{l-1},\V)) \to \Hc[i](\Plz{l},\V) \to \Hc[i](\Plz{l-1},\V)^{\N_{l,0}} \to 0.
\end{equation*}
D'un côté $i-1<l-1$, donc par hypothèse de récurrence l'action de Hecke de $\varsigma$ sur $\Hc[i-1](\Plz{l-1},\V)$ est nilpotente, donc sur $\Hc[1](\N_{l,0},\Hc[i-1](\Plz{l-1},\V))$ aussi. De l'autre l'action de $\N_{l,0}$ sur $\Hc[i](\Plz{l-1},\V)$ est triviale (car $\N_{l,0}$ centralise $\Plz{l-1}$ et agit trivialement sur $\V$ d'après le lemme \ref{lemm:H2}), donc (comme dans le cas $i=0$) l'action de Hecke de $\varsigma$ sur $\Hc[i](\Plz{l-1},\V)^{\N_{l,0}}$ est nilpotente. On en conclut que l'action de Hecke de $\varsigma$ sur $\Hc[i](\Plz{l},\V)$ est nilpotente. Donc le résultat est vrai pour $l$.

En particulier avec $l=[F:\Qp]$, on a montré que l'action de Hecke de $\varsigma$ sur $\Hc[i](\Nsz{k}'',\Hc[j](\Nsz{k+1},\Vwz))$ est nilpotente. En utilisant la suite spectrale \eqref{HSk}, on en déduit que l'action de Hecke de $\varsigma$ sur $\Hc[n](\Nsz{k},\Vwz)$ est nilpotente.
Donc le lemme est vrai pour $k$.
\end{proof}

\section{Parties ordinaires dérivées d'une induite} \label{sec:4}

Nous rappelons la construction et les propriétés du $\delta$-foncteur $\HOrd$. Puis, nous calculons les foncteurs $\HOrd$ sur certaines représentations induites de $G(F)$ sur $A$. Enfin, nous menons les calculs analogues pour les induites paraboliques d'un caractère.

\subsection{\texorpdfstring{Le $\delta$-foncteur $\HOrd$}{Le delta-foncteur HOrd}}

On rappelle la construction du $\delta$-foncteur $\HOrd$ ainsi que ses propriétés en suivant \cite[§ 3]{Em2}. En particulier, on rappelle les résultats dérivés de la relation d'adjonction entre les foncteurs $\Ind$ et $\Ord$.

\subsubsection*{Construction}

Soit $N_0$ un sous-groupe ouvert compact standard de $N(F)$ compatible avec la décomposition radicielle. Si $V$ est une représentation lisse de $B(F)$ sur $A$, alors on définit des représentations lisses de $T(F)$ sur $A$, que l'on appellera les \emph{parties ordinaires dérivées} de $V$, en faisant agir $T(F)$ par translation à droite sur le $A$-module
\begin{equation*}
\HOrd(V) \dfn \HomTp{\Hc(N_0,V)}
\end{equation*}
où l'indice $_{\fin{T(F)}}$ désigne les éléments qui engendrent un sous-$A$-module de type fini sous l'action de $T(F)$. Les foncteurs $\HOrd$ ainsi définis ne dépendent ni du choix de $T$ ni du choix de $N_0$. Ils sont nuls en degré strictement plus grand que $d$ et forment un $\delta$-foncteur de la catégorie des représentations lisses localement admissibles de $G(F)$ sur $A$ dans la catégorie des représentations lisses localement admissibles de $T(F)$ sur $A$. 

On rappelle le résultat suivant (voir \cite[Lemme 3.2.1]{Em2}) qui montre que le foncteur $\HomTp{-}$ est une localisation lorsqu'on le restreint à une sous-catégorie particulière.

\begin{lemm} \label{lemm:x}
Soit $\mathcal{X}$ la sous-catégorie pleine des représentations lisses de $T^+$ sur $A$ qui sont réunion de sous-$A$-modules de type fini $T^+$-invariants. Alors pour tout $X\in\mathcal{X}$, on a un isomorphisme naturel $T(F)$-équivariant
\begin{equation*}
\HomTp{X} \cong A[T(F)] \otimes_{A[T^+]} X.
\end{equation*}
En particulier, ce foncteur est exact sur $\mathcal{X}$ et si l'action de $t \in T^+$ sur $X \in \mathcal{X}$ est localement nilpotente, alors $\HomTp{X}=0$.
\end{lemm}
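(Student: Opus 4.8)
The plan is to exhibit an explicit natural $T(F)$-equivariant morphism and to prove it is an isomorphism for $X \in \mathcal{X}$; the exactness and vanishing assertions are then formal. First I would record the structure of $A[T(F)]$ over $A[T^+]$: the monoid $T^+$ is commutative and cancellative, so multiplication by any element of $T^+$ is injective on the monoid algebra $A[T^+]$, and by Lemma \ref{lemm:tau} there is $\tau \in T^+$ with $T(F) = \bigcup_{k \geq 0} \tau^{-k} T^+$, so that (using commutativity) $A[T(F)]$ is the localization of $A[T^+]$ obtained by inverting $\tau$, equivalently by inverting the whole multiplicative set $T^+$; in particular $A[T(F)]$ is flat over $A[T^+]$. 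I would also note that for every $A[T(F)]$-module $M$, evaluation at $1$ is an isomorphism $\Hom_{A[T^+]}(A[T(F)], M) \iso M$: an $A[T^+]$-linear map out of $A[T^+][\tau^{-1}]$ into a module on which $\tau$ acts invertibly is automatically $A[T(F)]$-linear, hence determined by its value at $1$, and this isomorphism is $T(F)$-equivariant for the right-translation action on the source and the module action on $M$. Taking $M = A[T(F)] \otimes_{A[T^+]} X$, I define $\mu_X : \Hom_{A[T^+]}(A[T(F)], X) \to A[T(F)] \otimes_{A[T^+]} X$ as the map induced by $x \mapsto 1 \otimes x$ followed by this isomorphism, so that $\mu_X(\phi) = 1 \otimes \phi(1)$; it is natural in $X$, $T(F)$-equivariant, and restricts to a morphism $\HomTp{X} \to A[T(F)] \otimes_{A[T^+]} X$, which I claim is an isomorphism.

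Next I would reduce to the case where $X$ is finitely generated over $A$ and $T^+$-stable, hence of finite length over $A[T^+]$. Write $X = \bigcup_i X_i$ as the filtered union of its finitely generated $T^+$-stable $A$-submodules. Both functors commute with this colimit: for $A[T(F)] \otimes_{A[T^+]} -$ because tensor products commute with colimits (and, $A[T(F)]$ being flat, the $X_i$ inject); and for $\HomTp{-}$ because the transition maps $\HomTp{X_i} \to \HomTp{X}$ are injective (left-exactness of $\Hom$) and every $T(F)$-finite $\phi : A[T(F)] \to X$ has finitely generated image. Indeed, if $M$ denotes the finitely generated $A$-submodule of $\Hom_{A[T^+]}(A[T(F)], X)$ generated by the $T(F)$-orbit of $\phi$, then $\im \phi$ lies in the finitely generated $A$-module $\{ \psi(1) : \psi \in M \}$, hence in some $X_i$, and $\phi$ remains $T(F)$-finite as a homomorphism into $X_i$. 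Since $\mu_X = \varinjlim_i \mu_{X_i}$, it suffices to treat each $X_i$.

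So let $X$ be of finite length over $A[T^+]$. I would apply Fitting's lemma to the endomorphism $\tau$: one has $X = X_{\mathrm{nil}} \oplus X_{\mathrm{bij}}$ with $\tau$ nilpotent on $X_{\mathrm{nil}}$ and bijective on $X_{\mathrm{bij}}$, and this is a decomposition of $A[T^+]$-modules since $T^+$ is commutative (everything commutes with $\tau$); $\mu_X$ respects it. On $X_{\mathrm{nil}}$ both sides vanish: $A[T(F)] \otimes_{A[T^+]} X_{\mathrm{nil}} = 0$ since $1 \otimes x = \tau^{-n} \otimes \tau^{n} x = 0$ for $n$ large (because $\tau^{n} X_{\mathrm{nil}} = 0$), and $\Hom_{A[T^+]}(A[T(F)], X_{\mathrm{nil}}) = 0$ since $\phi(h) = \tau^{n} \phi(\tau^{-n} h) \in \tau^{n} X_{\mathrm{nil}} = 0$ for $n$ large. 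On $X_{\mathrm{bij}}$, $\tau$ acts invertibly, so $X_{\mathrm{bij}}$ is naturally an $A[T(F)]$-module; then $A[T(F)] \otimes_{A[T^+]} X_{\mathrm{bij}} \cong X_{\mathrm{bij}}$, while $\Hom_{A[T^+]}(A[T(F)], X_{\mathrm{bij}}) \cong X_{\mathrm{bij}}$ by the isomorphism of the first paragraph; since $X_{\mathrm{bij}}$ is finitely generated over $A$ this already equals its $T(F)$-finite part, and under these identifications $\mu_{X_{\mathrm{bij}}}$ is the identity of $X_{\mathrm{bij}}$. Hence $\mu_X$ is an isomorphism, and by the reduction so it is for every $X \in \mathcal{X}$. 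The remaining assertions follow: exactness on $\mathcal{X}$ from the isomorphism together with the flatness of $A[T(F)]$ over $A[T^+]$; and if $t \in T^+$ acts locally nilpotently on $X \in \mathcal{X}$, then, viewing $A[T(F)]$ as the localization of $A[T^+]$ at the multiplicative set $T^+ \ni t$, every $x \in X$ satisfies $1 \otimes x = t^{-n} \otimes t^{n} x = 0$ in $A[T(F)] \otimes_{A[T^+]} X$ for $n$ large, so $A[T(F)] \otimes_{A[T^+]} X = 0$ and hence $\HomTp{X} = 0$.

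The step I expect to require the most care is the interchange of $\HomTp{-}$ with the filtered colimit $X = \bigcup_i X_i$, which rests on the fact that a $T(F)$-finite homomorphism has finitely generated image, together with checking naturality and $T(F)$-equivariance of $\mu_X$ throughout; once $X$ has finite length the Fitting dichotomy makes everything explicit. I also note that only the condition that $X$ be a union of finitely generated $T^+$-invariant submodules is used for the isomorphism, the smoothness in the definition of $\mathcal{X}$ serving only to ensure that $A[T(F)] \otimes_{A[T^+]} X$ is a smooth representation of $T(F)$.
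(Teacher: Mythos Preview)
Your proof is correct. The paper itself does not give a proof of this lemma but simply refers to \cite[Lemme 3.2.1]{Em2}; your argument---identifying $A[T(F)]$ as the localization $A[T^+][\tau^{-1}]$, reducing to finitely generated $T^+$-stable $A$-submodules via the filtered colimit, and applying Fitting's lemma to the action of $\tau$---is precisely the strategy used by Emerton there, so there is nothing substantive to contrast.
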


\begin{rema} \label{rema:xab}
La sous-catégorie $\mathcal{X}$ est stable par passage aux sous-objets (car $A$ est noethérien) et aux objets quotients.
\end{rema}

\subsubsection*{Adjonction dérivée}

Soient $U$ et $V$ des représentations lisses localement admissibles de $T(F)$ et $G(F)$ respectivement sur $A$. On déduit de \cite[Théorème 4.4.6]{Em1} que le foncteur $\Ord$ induit un isomorphisme
\begin{equation*}
\Hom_{G(F)} \left( \Ind U,V \right) \cong \Hom_{T(F)} \left( U,\Ord V \right).
\end{equation*}
Comme $\Ord$ envoie les injectifs de la catégorie des représentations lisses localement admissibles de $G(F)$ sur $A$ sur des injectifs de la catégorie des représentations lisses localement admissibles de $T(F)$ sur $A$ (puisque c'est l'adjoint à droite du foncteur exact $\Ind$), on a une suite spectrale de Grothendieck de $A$-modules
\begin{equation} \label{Gro}
\Ext_{T(F)}^i \left( U,\ROrd[j] V \right) \Rightarrow \Ext_{G(F)}^{i+j} \left( \Ind U,V \right)
\end{equation}
où $\ROrd$ désigne les foncteurs dérivés à droite du foncteur $\Ord$ dans la catégorie des représentations lisses localement admissibles de $G(F)$ sur $A$. Comme ces derniers forment un $\delta$-foncteur universel qui coïncide avec $\HOrd$ en degré $0$, on a un morphisme de $\delta$-foncteurs
\begin{equation} \label{ROrd}
\ROrd \to \HOrd
\end{equation}
qui est un isomorphisme en degré $0$, donc une injection en degré $1$
\begin{equation} \label{R1Ord}
\ROrd[1] \hookrightarrow \HOrd[1].
\end{equation}
De plus, on a la conjecture suivante (voir \cite[Conjecture 3.7.2]{Em2}) qui est démontrée pour $\GL_2$ dans \cite{EmP}.

\begin{conj}[Emerton] \label{conj:ROrd}
Le morphisme \eqref{ROrd} est un isomorphisme.
\end{conj}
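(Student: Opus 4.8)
\emph{Esquisse d'une stratégie de démonstration.} On se ramène d'abord à un énoncé d'effaçabilité. Le $\delta$-foncteur $\ROrd$ étant par construction le $\delta$-foncteur dérivé à droite — donc universel — du foncteur $\Ord$ dans la catégorie des représentations lisses localement admissibles de $G(F)$ sur $A$, et le morphisme \eqref{ROrd} étant un isomorphisme en degré $0$, il suffit de montrer que le $\delta$-foncteur $\HOrd$ est lui aussi universel, c'est-à-dire effaçable en degré strictement positif. Comme cette catégorie possède suffisamment d'injectifs, cela revient à établir que $\HOrd[n](I)=0$ pour tout objet injectif $I$ et tout entier $n\geq1$ : on aurait alors $\HOrd[n]\cong\ROrd[n]$ par unicité des foncteurs dérivés, et \eqref{ROrd} serait un isomorphisme. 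En particulier, compte tenu de l'injection \eqref{R1Ord}, démontrer la conjecture en degré $1$ équivaut à établir que $\HOrd[1](I)=0$ pour tout injectif $I$.

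Pour cette annulation, on écrit $\Ord=L\circ(-)^{N_0}$ avec $L=\HomTp{-}$, qui est exact sur la sous-catégorie $\mathcal{X}$ du lemme \ref{lemm:x} — laquelle contient les $A$-modules $\Hc[n](N_0,I)$, réunions de sous-modules de type fini $T^+$-invariants. On a donc $\HOrd[n](I)=L(\Hc[n](N_0,I))$, et l'enjeu n'est pas l'annulation de $\Hc[n](N_0,I)$ — qui n'a pas lieu en général — mais la nilpotence locale, sur ce $A$-module, de l'action de Hecke de l'élément $\tau\in T^+$ du lemme \ref{lemm:tau}, laquelle entraîne $L(\Hc[n](N_0,I))=0$ d'après le lemme \ref{lemm:x}. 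Pour l'obtenir, on exploiterait d'une part que $\Ord$, adjoint à droite du foncteur exact $\Ind$, envoie les objets injectifs sur des objets injectifs (comme rappelé avant \eqref{Gro}), et d'autre part une transitivité — à établir pour les deux $\delta$-foncteurs $\ROrd$ et $\HOrd$, de façon compatible avec \eqref{ROrd} — le long d'une chaîne de sous-groupes paraboliques standards de $G$, les suites spectrales de Hochschild--Serre des dévissages de $N_0$ de la section \ref{sec:3} la fournissant du côté de $\HOrd$. On serait alors ramené au cas du rang semi-simple $1$, en substance celui de $\GL_2$, qui est l'objet de \cite{EmP}.

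La difficulté principale est cet énoncé d'annulation : on ne dispose d'aucune prise sur la restriction à $N_0$ d'un objet injectif de la catégorie localement admissible — qui n'est en général ni $N_0$-injectif ni $N_0$-acyclique —, de sorte que ni sa cohomologie ni l'action de Hecke de $\tau$ sur celle-ci ne se calculent directement. Une autre voie serait de produire, pour chaque classe de $\HOrd[n](V)$, un plongement $V\hookrightarrow\Ind U$ dans un induit convenable qui l'annule, puis de vérifier cette annulation grâce au calcul du gradué de la filtration de Bruhat de la section \ref{sec:2} ; mais construire de tels plongements réclame à nouveau une description, actuellement hors d'atteinte, des objets injectifs de la catégorie localement admissible. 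Le seul cas connu, celui de $\GL_2$ traité dans \cite{EmP}, repose sur des simplifications propres au rang semi-simple $1$ permettant de ne considérer que les bas degrés et de maîtriser $\HOrd[1]$ via la coünité $\Ind\Ord V\to V$ ; étendre cette stratégie au rang supérieur, où il faut contrôler $\Hc[n](N_0,I)$ jusqu'en degré $d=\dim N$ et où les dévissages font apparaître une cohomologie supérieure non triviale, est le cœur de la question et la raison pour laquelle l'énoncé reste conjectural.
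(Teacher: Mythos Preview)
L'énoncé en question est une \emph{conjecture} (Conjecture~\ref{conj:ROrd}) : le papier ne la démontre pas, il se contente de la formuler et de signaler qu'elle est établie pour $\GL_2$ dans \cite{EmP}. Il n'y a donc pas de preuve du papier à laquelle comparer votre proposition.

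Votre texte n'est d'ailleurs pas une preuve non plus, et vous le reconnaissez explicitement : vous esquissez une stratégie (ramener l'isomorphisme à l'effaçabilité de $\HOrd$, c'est-à-dire à l'annulation de $\HOrd[n](I)$ pour $I$ injectif et $n\geq 1$, puis tenter une transitivité parabolique pour se réduire au rang semi-simple $1$), et vous identifiez correctement l'obstruction principale --- l'absence de contrôle sur la restriction à $N_0$ d'un injectif de la catégorie localement admissible --- avant de conclure vous-même que c'est \og la raison pour laquelle l'énoncé reste conjectural \fg. Cette analyse des difficultés est raisonnable et cohérente avec le statut de l'énoncé dans le papier.

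En somme : votre proposition n'est pas fautive, mais elle ne constitue pas une démonstration ; elle est en accord avec le papier précisément parce que ni vous ni le papier ne prouvez l'énoncé. Si l'exercice attendait une preuve, il faut simplement constater qu'aucune n'existe dans l'article --- l'énoncé y figure comme conjecture ouverte (hors du cas $\GL_2$).
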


Les termes de bas degré de la suite spectrale \eqref{Gro} donnent une suite exacte de $A$-modules
\begin{multline*}
0 \to \Ext_{T(F)}^1 \left( U,\Ord V \right) \to \Ext_{G(F)}^1 \left( \Ind U,V \right) \\
\to \Hom_{T(F)} \left( U,\ROrd[1] V \right) \to \Ext_{T(F)}^2 \left( U,\Ord V \right) \\
\to \Ext_{G(F)}^2 \left( \Ind U,V \right).
\end{multline*}
En utilisant l'injection \eqref{R1Ord}, on obtient donc une suite exacte de $A$-modules
\begin{multline} \label{GroSE}
0 \to \Ext_{T(F)}^1 \left( U,\Ord V \right) \to \Ext_{G(F)}^1 \left( \Ind U,V \right) \\
\to \Hom_{T(F)} \left( U,\HOrd[1] V \right).
\end{multline}
Le premier morphisme non trivial de cette suite exacte est induit par le foncteur $\Ind$ et le second est induit par le morphisme connectant en degré $0$ du $\delta$-foncteur $\HOrd$.

\subsection{Calculs de parties ordinaires dérivées}

On montre que la filtration de Bruhat d'une représentation induite de $G(F)$ sur $A$ induit une filtration de ses parties ordinaires dérivées. On en déduit l'expression du $\delta$-foncteur $\HOrd$ sur $\Ind U$ en degré $1$ pour toute représentation lisse localement admissible $U$ de $T(F)$ sur $A$ et en degré quelconque lorsque $U$ est un caractère et $A=\ke$.

\subsubsection*{Filtration et calcul sur le gradué}

\begin{lemm} \label{lemm:filx}
Soit $U$ une représentation lisse localement admissible de $T(F)$ sur $A$. Pour tout $r \in \llbrack 0,d \rrbrack$, les suites exactes \eqref{filcohom} sont dans la catégorie $\mathcal{X}$ du lemme \ref{lemm:x}.
\end{lemm}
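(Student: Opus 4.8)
Le plan est de tout déduire du fait suivant, d'ordre « finitude » : pour toute représentation lisse localement admissible $U$ de $T(F)$ sur $A$, le $A$-module $\Hc(N_0,\Ind U)$ muni de l'action de Hecke de $T^+$ est objet de $\mathcal{X}$ (c'en est bien une représentation lisse, l'action de Hecke de $T_0 \subset T^+$ coïncidant avec l'action lisse naturelle). Tout le reste de l'argument est purement formel et repose sur la filtration de Bruhat et la remarque \ref{rema:xab}.

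D'abord je vérifierais que les morphismes de \eqref{filcohom} sont $T^+$-équivariants pour l'action de Hecke : ils sont induits en cohomologie par les morphismes $B(F)$-équivariants de la suite exacte \eqref{filind}, donc $(T^+ \ltimes N_0)$-équivariants en restreignant l'action de $B(F)$ au sous-monoïde $T^+ \ltimes N_0$, et l'action de Hecke est fonctorielle ; de même, l'isomorphisme $B(F)$-équivariant $I_r/I_{r-1} \cong \bigoplus_{\ell(w)=r} \Clis_c(N_w(F),U)$ fourni par \eqref{filind} induit un isomorphisme $T^+$-équivariant en cohomologie. Ensuite, la proposition \ref{prop:cohom} assure que l'inclusion $I_{r-1} \subset I_r$ induit une injection $\Hc(N_0,I_{r-1}) \hookrightarrow \Hc(N_0,I_r)$ ; en composant ces injections on voit que $\Hc(N_0,I_r)$ s'identifie à une sous-représentation de $T^+$ de $\Hc(N_0,I_d) = \Hc(N_0,\Ind U)$, tandis que le gradué $\bigoplus_{\ell(w)=r} \Hc(N_0,\Clis_c(N_w(F),U))$ apparaissant dans \eqref{filcohom} s'identifie à un quotient de $\Hc(N_0,I_r)$. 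Comme $\mathcal{X}$ est stable par passage aux sous-objets et aux objets quotients (remarque \ref{rema:xab}), il résultera du fait annoncé que les trois termes de \eqref{filcohom}, ainsi que les morphismes qui les relient, vivent dans $\mathcal{X}$, ce qui est exactement l'énoncé.

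Il reste alors à établir que $\Hc(N_0,\Ind U) \in \mathcal{X}$, et je me ramènerais pour cela au cas où $U$ est admissible. On écrit $U$ comme réunion filtrante de ses sous-$T(F)$-représentations admissibles $U'$ (stables par somme finie puisqu'un quotient d'une représentation admissible l'est) ; comme $B^-(F) \backslash G(F)$ est compact, toute fonction de $\Ind U$ ne prend qu'un nombre fini de valeurs, lesquelles engendrent une telle sous-représentation $U'$ avec $f \in \Ind U'$, d'où $\Ind U = \bigcup_{U'} \Ind U'$ ; et comme $N_0$ est compact, le foncteur $\Hc(N_0,-)$ commute aux limites inductives filtrantes de représentations lisses (une cochaîne localement constante sur le compact $N_0^n$ ne prenant qu'un nombre fini de valeurs), de sorte que $\Hc(N_0,\Ind U)$ est la réunion filtrante des images des $\Hc(N_0,\Ind U')$, lesquelles sont des quotients de ces derniers. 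Puisque $\mathcal{X}$ est stable par quotients et par réunions filtrantes de sous-objets (ce dernier point étant immédiat sur la définition), il suffit de savoir que $\Hc(N_0,\Ind U') \in \mathcal{X}$ lorsque $U'$ est admissible. Or dans ce cas $\Ind U'$ est une représentation lisse admissible de $G(F)$ sur $A$ (voir \cite[§ 4.1]{Em1}), et l'appartenance à $\mathcal{X}$ des $A$-modules $\Hc(N_0,\Ind U')$ munis de l'action de Hecke fait partie de la construction du $\delta$-foncteur $\HOrd$ dans \cite[§ 3.2]{Em2} — c'est précisément la propriété qui rend le foncteur $\HomTp{-}$ applicable. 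C'est ce dernier point de finitude qui constitue le cœur de la preuve ; une fois admis, l'assemblage décrit ci-dessus ne présente aucune difficulté.
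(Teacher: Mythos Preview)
Your proof is correct and follows the same strategy as the paper's: show that the morphisms of \eqref{filcohom} are $T^+$-equivariant, establish that $\Hc(N_0,\Ind U)\in\mathcal X$, and then conclude for all terms via the stability of $\mathcal X$ under subobjects and quotients (remarque~\ref{rema:xab}). The only difference is in how you obtain $\Hc(N_0,\Ind U)\in\mathcal X$: the paper invokes \cite[Théorème~3.4.7]{Em2} directly for locally admissible representations of $G(F)$, whereas you reduce by hand to the admissible case via a filtered-colimit argument before citing Emerton --- a perfectly valid detour, and essentially what lies inside that theorem anyway.
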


\begin{proof}
Soit $r \in \llbrack 0,d \rrbrack$. Les morphismes des suites exactes \eqref{filind} sont $B(F)$-équivariants, donc ceux des suites exactes \eqref{filcohom} sont $T^+$-équivariants pour l'action de Hecke. Comme $U$ est localement admissible il en est de même pour $\Ind U$, donc les $A$-modules $\Hc(N_0,\Ind U)$ sont dans $\mathcal{X}$ d'après \cite[Théorème 3.4.7]{Em2}. En passant aux sous-objets puis aux objets quotients (voir la remarque \ref{rema:xab}), on en déduit que les suites exactes \eqref{filcohom} sont également dans $\mathcal{X}$.
\end{proof}

En utilisant les lemmes \ref{lemm:x} et \ref{lemm:filx}, on voit que si $U$ est une représentation lisse localement admissible de $T(F)$ sur $A$, alors pour tout $r \in \llbrack 0,d \rrbrack$ on a des suites exactes courtes de représentations lisses localement admissibles de $T(F)$ sur $A$
\begin{multline} \label{filhord}
0 \to \HOrd (I_{r-1}) \to \HOrd (I_r) \\
\to \bigoplus_{\ell(w)=r} \HOrd(\Clis_c(N_w(F),U)) \to 0.
\end{multline}

On rappelle que si $U$ est une représentation lisse de $T(F)$ sur $A$, alors pour tout $w \in W$ on note $U^w$ (ou simplement $U^\alpha$ lorsque $w=s_\alpha$ avec $\alpha \in \Delta$) la représentation de $T(F)$ dont le $A$-module sous-jacent est $U$ et sur lequel $t \in T(F)$ agit à travers $\w t \w^{-1}$. Lorsque $U$ est un caractère $\chi$ on note $w(\chi)=\chi^{w^{-1}}$. On note enfin $\alpha_w$ le caractère algébrique de la représentation adjointe de $T$ sur $\det_F \Lie(N_{w_0w})$.

\begin{theo} \label{theo:Clis}
Soient $U$ une représentation lisse localement admissible de $T(F)$ sur $A$, $w \in W$ et $n \in \Nbb$. On suppose ou bien $A=\ke$ et $U$ de dimension $1$, ou bien $n \leq 1$. Alors on a un isomorphisme $T(F)$-équivariant
\begin{equation*}
\HOrd[n](\Clis_c(N_w(F),U)) \cong
\begin{cases}
U^w \otimes (\oma_w) &\text{si $n = [F:\Qp] \cdot \ell(w)$,} \\
0& \text{sinon.}
\end{cases}
\end{equation*}
\end{theo}

\begin{proof}
On reprend les notations de la sous-section \ref{sec:calc}. En utilisant les lemmes \ref{lemm:V0} et \ref{lemm:filx} et la remarque \ref{rema:xab}, on voit que l'on a une suite exacte courte dans la catégorie $\mathcal{X}$ du lemme \ref{lemm:x}
\begin{equation*}
0 \to \Hc[n](N_0,\Vwz) \to \Hc[n](N_0,V_w) \to \Hc[n](N_0,V_w/\Vwz) \to 0
\end{equation*}
et l'action de Hecke de $\varsigma$ sur $\Hc[n](N_0,V_w/\Vwz)$ est localement nilpotente. En appliquant le foncteur $\HomTp{-}$ on obtient donc un isomorphisme $T(F)$-équivariant
\begin{equation*}
\HOrd[n](V_w) \cong \HomTp{\Hc[n](N_0,\Vwz)}.
\end{equation*}

Si $n = [F:\Qp] \cdot \ell(w)$, alors on a un isomorphisme $T^+$-équivariant $\Hc[n](N_0,\Vwz) \cong U^w \otimes (\oma_0)$ d'après le lemme \ref{lemm:H2} avec $k=0$, d'où (en remarquant que $\alpha_w=\alpha_0$), un isomorphisme $T(F)$-équivariant
\begin{equation*}
\HomTp{\Hc[n](N_0,\Vwz)} \cong U^w \otimes (\oma_w).
\end{equation*}

Si $n \neq [F:\Qp] \cdot \ell(w)$, alors ou bien $n < \ell(w)$, donc l'action de Hecke de $\varsigma$ est nilpotente sur $\Hc[n](N_0,\Vwz)$ d'après le lemme \ref{lemm:H3} avec $k=0$, ou bien $n>\ell(w)$, donc $\Hc[n](N_0,\Vwz)=0$ d'après le lemme \ref{lemm:H1} avec $k=0$. Dans les deux cas, on en déduit que
\begin{equation*}
\HomTp{\Hc[n](N_0,\Vwz)} = 0. \qedhere
\end{equation*}
\end{proof}

\begin{rema}
Le théorème est en fait vrai en toute généralité (nous le démontrerons dans un prochain article). Cependant, l'argument qui montre que l'action est triviale dans l'itération de la récurrence du lemme \ref{lemm:H2} ne se généralise pas. Cela ne nous limitera pas dans les applications.
\end{rema}

\subsubsection*{Calcul sur une induite}

On commence par calculer le $\delta$-foncteur $\HOrd$ en degré $1$ sur une induite localement admissible.

\begin{coro} \label{coro:H1Ord}
Soit $U$ une représentation lisse localement admissible de $T(F)$ sur $A$.
\begin{enumerate}[(i)]
\item Si $F=\Qp$, alors on a un isomorphisme naturel $T(\Qp)$-équivariant
\begin{equation*}
\HOrdQp\left(\IndQp U\right) \cong \bigoplus_{\alpha \in \Delta} U^\alpha \otimes (\oma).
\end{equation*}
\item Si $F \neq \Qp$, alors $\HOrd[1] \left( \Ind U \right)=0$.
\end{enumerate}
\end{coro}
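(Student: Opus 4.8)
The plan is to push the Bruhat filtration $(I_r)_{r \in \llbrack -1,d \rrbrack}$ of $\Ind U$ through the $\delta$-foncteur $\HOrd$ in degree $1$ and to read off the contribution of each graded piece from Théorème \ref{theo:Clis}. The key input is the family of short exact sequences \eqref{filhord} in degree $1$: for each $r \in \llbrack 0,d \rrbrack$,
\[
0 \to \HOrd[1](I_{r-1}) \to \HOrd[1](I_r) \to \bigoplus_{\ell(w)=r} \HOrd[1]\!\left(\Clis_c(N_w(F),U)\right) \to 0,
\]
which is exact in each degree because the cohomology sequences \eqref{filcohom} have vanishing connecting morphisms (Proposition \ref{prop:cohom}), lie in the category $\mathcal{X}$ by Lemme \ref{lemm:filx}, and $\HomTp{-}$ is exact on $\mathcal{X}$ by Lemme \ref{lemm:x}. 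Since $I_{-1}=0$ and $I_d = \Ind U$, this exhibits $\HOrd[1](\Ind U)$ as an iterated extension whose successive graded pieces are $\bigoplus_{\ell(w)=r}\HOrd[1](\Clis_c(N_w(F),U))$ for $r = 0,1,\dots,d$.

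Then I would feed in Théorème \ref{theo:Clis} with $n=1$ (the hypothesis $n\le 1$ holds): the term $\HOrd[1](\Clis_c(N_w(F),U))$ is nonzero only when $[F:\Qp]\cdot\ell(w)=1$, and then it equals $U^w\otimes(\oma_w)$. If $F\neq\Qp$ this equation has no solution, so every graded piece vanishes, and an easy induction on $r$ starting from $\HOrd[1](I_{-1})=0$ gives $\HOrd[1](I_r)=0$ for all $r$; in particular $\HOrd[1](\Ind U)=0$, which is (ii). If $F=\Qp$, the equation forces $\ell(w)=1$, i.e. $w=s_\alpha$ for some $\alpha\in\Delta$, so the only nonzero graded piece occurs at $r=1$: the piece at $r=0$ is $\HOrd[1](\Clis_c(N_1(\Qp),U))=0$ since $\ell(1)=0$, whence $\HOrd[1](I_0)=0$; the sequence at $r=1$ then yields $\HOrd[1](I_1)\cong\bigoplus_{\alpha\in\Delta}\HOrd[1](\Clis_c(N_{s_\alpha}(\Qp),U))$; and for $r\ge 2$ the graded pieces vanish, so $\HOrd[1](I_r)=\HOrd[1](I_{r-1})$. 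Hence $\HOrdQp(\IndQp U)\cong\HOrd[1](I_1)$.

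It then remains to identify the terms on the right. By Théorème \ref{theo:Clis}, $\HOrd[1](\Clis_c(N_{s_\alpha}(\Qp),U))\cong U^{s_\alpha}\otimes(\omega^{-1}\circ\alpha_{s_\alpha})$; here $U^{s_\alpha}=U^\alpha$ by definition, and $\alpha_{s_\alpha}=\alpha$ because (as in the proof of Théorème \ref{theo:Clis}) $\alpha_{s_\alpha}$ is the sum of the roots in $\Phi^+-\Phi^+_{s_\alpha}$, and $\alpha$ is the unique positive root sent to a negative root by $s_\alpha$, so $\Phi^+-\Phi^+_{s_\alpha}=\{\alpha\}$. Summing over $\alpha\in\Delta$ gives the formula of (i). Naturality in $U$ is immediate: the Bruhat filtration, the exact sequences \eqref{filind}, \eqref{filcohom} and \eqref{filhord}, and the isomorphism of Théorème \ref{theo:Clis} are all functorial in $U$. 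I do not expect a real obstacle here, since the hard analysis is already packaged in Théorème \ref{theo:Clis} and in the filtration machinery of sections \ref{sec:2}--\ref{sec:3}; the only points requiring a little care are the arithmetic equivalence $[F:\Qp]\cdot\ell(w)=1\Longleftrightarrow(F=\Qp\text{ and }\ell(w)=1)$ and the identification of the twisting character $\alpha_{s_\alpha}$ with the simple root $\alpha$.
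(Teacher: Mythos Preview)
Your proof is correct and follows essentially the same approach as the paper: push the Bruhat filtration through $\HOrd$ in degree $1$ via the exact sequences \eqref{filhord}, apply Théorème \ref{theo:Clis} with $n=1$, and use $\alpha_{s_\alpha}=\alpha$. The one place where the paper is more careful is naturality: Théorème \ref{theo:Clis} as stated does \emph{not} assert that its isomorphism is natural in $U$, so the paper does not simply quote it but instead unwinds the construction in the special case $w=s_\alpha$ (the inclusion $V_{s_\alpha,0}\subset V_{s_\alpha}$, Lemme \ref{lemm:V0}, inflation from $N_{\alpha,0}$, evaluation at $1$, and Proposition \ref{prop:dimN}) to check that each step is functorial in $U$; your sentence ``the isomorphism of Théorème \ref{theo:Clis} is functorial in $U$'' is true, but it is an additional claim that needs exactly this kind of verification.
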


\begin{proof}
En utilisant les suites exactes \eqref{filhord} pour tout $r \in \llbrack 0,d \rrbrack$ et le théorème \ref{theo:Clis} avec $n=1$ (en remarquant que $\alpha_{s_\alpha}=\alpha$ pour tout $\alpha \in \Delta$), on obtient directement les isomorphismes des points (i) et (ii). On montre la naturalité du premier en rappelant les différentes étapes de sa construction. La filtration de Bruhat $(I_r)_{r \in \llbrack -1,d \rrbrack}$ de $\Ind U$ est fonctorielle en $U$, tout comme l'isomorphisme $B(F)$-équivariant
\begin{equation*}
I_r/I_{r-1} \cong  \bigoplus_{\ell(w)=r} \Clis_c(N_w(F),U)
\end{equation*}
pour tout $r \in \llbrack 0,d \rrbrack$. Ainsi les suites exactes \eqref{filhord} sont fonctorielles en $U$ pour tout $r \in \llbrack 0,d \rrbrack$. On suppose $F=\Qp$. En tenant compte des cas d'annulation du théorème \ref{theo:Clis} avec $n=1$, on en déduit un isomorphisme naturel $T(\Qp)$-équivariant
\begin{equation*}
\HOrdQp \left(\IndQp U\right) \cong \bigoplus_{\alpha \in \Delta} \HOrdQp(\Clis_c(N_{s_\alpha}(F),U)).
\end{equation*}
Soit $\alpha \in \Delta$. Avec les notations de la sous-section \ref{sec:calc}, l'inclusion $V_{s_\alpha,0} \subset V_{s_\alpha}$ est fonctorielle en $U$. En utilisant le lemme \ref{lemm:V0} avec $w=s_\alpha$ et le lemme \ref{lemm:x}, on voit qu'elle induit un isomorphisme naturel $T(\Qp)$-équivariant
\begin{equation*}
\HOrdQp(V_{s_\alpha}) \osi A[T] \otimes_{A[T^+]} \Hc[1](N_0,V_{s_\alpha,0}).
\end{equation*}
Enfin, en notant $N_\alpha$ le sous-groupe radiciel de $N$ correspondant à $\alpha$ et $N_{\alpha,0}$ l'intersection de $N_\alpha(\Qp)$ avec $N_0$, on a des isomorphismes naturels $T^+$-équivariants
\begin{align*}
\Hc[1](N_0,V_{s_\alpha,0}) &\osi \Hc[1](N_{\alpha,0},V_{s_\alpha,0}^{N_{s_\alpha,0}}) \\
& \iso \Hc[1](N_{\alpha,0},U^\alpha) \\
& \iso U^\alpha \otimes (\oma).
\end{align*}
Le premier est l'inflation, le second est induit par l'évaluation en $1 \in N_{s_\alpha}(\Qp)$ et le dernier est l'isomorphisme de la proposition \ref{prop:dimN}.
\end{proof}

\begin{defi}
Un \og \emph{twisting element} \fg\footnote{Terminologie d'après \cite{BG}.} de $G$ est un caractère algébrique $\theta$ de $T$ tel que $\langle\theta,\alpha^\vee\rangle=1$ pour tout $\alpha \in \Delta$.
\end{defi}

\begin{rema}
Lorsque le groupe dérivé de $G$ est simplement connexe, la somme des poids fondamentaux relatifs à $\Delta$ est un \og twisting element \fg{} bien défini à un caractère algébrique de $G$ près.
\end{rema}

On calcule à présent le $\delta$-foncteur $\HOrd$ en degré quelconque sur l'induite d'un caractère lorsque $G$ admet un \og twisting element \fg{} que l'on note $\theta$.

\begin{coro} \label{coro:HnOrd}
Soient $\chi : T(F) \to A^\times$ un caractère lisse et $n \in \Nbb$. On suppose $A=\ke$ ou $n \leq 1$. Alors on a un isomorphisme $T(F)$-équivariant
\begin{equation*}
\HOrd[n]\left(\Ind \chi \omth\right) \cong \bigoplus_{[F:\Qp] \cdot \ell(w)=n} w(\chi) \omth.
\end{equation*}
\end{coro}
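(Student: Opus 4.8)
The plan is to deduce the corollary from Theorem~\ref{theo:Clis} and the short exact sequences~\eqref{filhord} applied to $U=\chi\omth$. Since $\chi\omth$ is a smooth character it is one-dimensional and locally admissible, so the hypothesis ``$A=\ke$ ou $n\leq1$'' is precisely what is needed to invoke Theorem~\ref{theo:Clis} for this $U$ and every $w\in W$. First I would observe that, for fixed $n$, the Bruhat filtration of $\Ind\chi\omth$ has a single nonzero graded piece in degree $n$: the sequence~\eqref{filhord} in degree $n$ reads
\begin{equation*}
0\to\HOrd[n](I_{r-1})\to\HOrd[n](I_r)\to\bigoplus_{\ell(w)=r}\HOrd[n](\Clis_c(N_w(F),\chi\omth))\to0,
\end{equation*}
and by Theorem~\ref{theo:Clis} the right-hand term vanishes unless $[F:\Qp]\cdot r=n$. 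Since $I_{-1}=0$ this forces $\HOrd[n](I_r)=0$ for $r<n/[F:\Qp]$, and for $r\geq n/[F:\Qp]$
\begin{equation*}
\HOrd[n](I_r)\cong\bigoplus_{\ell(w)=n/[F:\Qp]}(\chi\omth)^w\otimes(\omega^{-1}\circ\alpha_w),
\end{equation*}
where $\alpha_w$ is the algebraic character of $T$ on $\det_F\Lie(N_{w_0w})$ (if $[F:\Qp]\nmid n$ this sum is empty and $\HOrd[n](\Ind\chi\omth)=0$). Taking $r=d$ gives $\HOrd[n](\Ind\chi\omth)\cong\bigoplus_{[F:\Qp]\cdot\ell(w)=n}(\chi\omth)^w\otimes(\omega^{-1}\circ\alpha_w)$; no splitting of~\eqref{filhord} is needed here, precisely because the filtration has only this one jump.

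It then remains to identify each summand. The $F$-weights of $\Lie(N_{w_0w})$ are $\{\alpha\in\Phi^+\mid w(\alpha)\in\Phi^-\}$, so $\alpha_w=\sum_{\alpha\in\Phi^+,\,w(\alpha)<0}\alpha$, and a short induction on $\ell(w)$ (using $s_\beta(\theta)=\theta-\beta$ for $\beta\in\Delta$, valid because $\theta$ is a ``twisting element'') gives $\alpha_w=\theta-w^{-1}(\theta)$ as algebraic characters of $T$. On the other hand $t\in T(F)$ acts on $(\chi\omth)^w$ through $\w t\w^{-1}$, hence $(\chi\omth)^w\cong w^{-1}(\chi)\cdot(\omega^{-1}\circ w^{-1}(\theta))$ with $w^{-1}(\theta)$ the corresponding algebraic character; tensoring with $\omega^{-1}\circ\alpha_w=\omega^{-1}\circ(\theta-w^{-1}(\theta))$ cancels the $w^{-1}(\theta)$-twist and leaves $(\chi\omth)^w\otimes(\omega^{-1}\circ\alpha_w)\cong w^{-1}(\chi)\omth$. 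Finally, since $\ell(w^{-1})=\ell(w)$ the index set $\{w\in W\mid[F:\Qp]\cdot\ell(w)=n\}$ is stable under $w\mapsto w^{-1}$, so reindexing the direct sum yields $\HOrd[n](\Ind\chi\omth)\cong\bigoplus_{[F:\Qp]\cdot\ell(w)=n}w(\chi)\omth$, which is the claim.

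Since Theorem~\ref{theo:Clis} and the sequences~\eqref{filhord} are already in place, the corollary is essentially bookkeeping. The one place where I expect to have to be careful is the twist computation. Pinning down $\alpha_w=\theta-w^{-1}(\theta)$ with the right occurrence of $w$ versus $w^{-1}$, and matching it against the conjugation convention defining $(\cdot)^w$, is what produces an a priori ``$w^{-1}(\chi)$'' and makes the reindexing step necessary in order to recover the ``$w(\chi)$'' of the statement; this juggling of Weyl-group conventions is the only genuinely delicate point.
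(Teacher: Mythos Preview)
Your proposal is correct and follows essentially the same route as the paper: apply Theorem~\ref{theo:Clis} together with the filtration exact sequences~\eqref{filhord}, observe that only the graded pieces with $[F:\Qp]\cdot\ell(w)=n$ contribute, then identify each summand and reindex by $w\mapsto w^{-1}$. The only difference is in how you establish the key identity $w^{-1}(\theta)+\alpha_w=\theta$: you argue by induction on $\ell(w)$ using $s_\beta(\theta)=\theta-\beta$, whereas the paper proves it via the half-sum of positive roots, first checking $w^{-1}(\rho)+\alpha_w=\rho$ directly and then using that $\theta-\rho$ is $W$-invariant. Both arguments are standard and equally short.
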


\begin{proof}
En utilisant les suites exactes \eqref{filhord} pour tout $r \in \llbrack 0,d \rrbrack$ et le théorème \ref{theo:Clis} avec $U = \chi \omth$, on obtient un isomorphisme $T(F)$-équivariant
\begin{equation*}
\HOrd[n]\left(\Ind \chi \omth\right) 
\cong \bigoplus_{[F:\Qp] \cdot \ell(w)=n} \left( \chi \cdot (\omega^{-1} \circ \theta) \right)^w \otimes (\oma_w)
\end{equation*}
et pour tout $w \in W$, on a par définition
\begin{equation*}
\left( \chi \cdot (\omega^{-1} \circ \theta) \right)^w \otimes (\oma_w) = w^{-1}(\chi) \cdot (\omega^{-1} \circ (w^{-1}(\theta) + \alpha_w)).
\end{equation*}
En faisant le changement de variable $w \mapsto w^{-1}$ dans la somme directe (possible car $\ell(w)=\ell(w^{-1})$), on voit qu'il suffit de montrer que $w^{-1}(\theta) + \alpha_w = \theta$ pour tout $w \in W$.

Soit $\rho=\frac12\sum_{\alpha \in \Phi^+} \alpha$ la demi-somme des racines positives (qui n'est pas un caractère algébrique de $T$ en général). Pour tout $w \in W$, on a
\begin{equation*}
w^{-1}(\rho)+\alpha_w = \left( \frac12 \sum_{\substack{\alpha \in \Phi^+ \\ w(\alpha) \in \Phi^+}} \alpha - \frac12 \sum_{\substack{\alpha \in \Phi^+ \\ w(\alpha) \notin \Phi^+}} \alpha \right) + \sum_{\substack{\alpha \in \Phi^+ \\ w(\alpha) \notin \Phi^+}} \alpha = \rho.
\end{equation*}
D'après \cite[Partie II, § 1.5]{Jan}, on a $\langle \rho,\alpha^\vee \rangle = 1$, donc $\langle\theta-\rho,\alpha^\vee\rangle=0$ pour tout $\alpha \in \Delta$. On en déduit que $\theta-\rho$ est invariant sous l'action de $W$, d'où
\begin{equation*}
w^{-1}(\theta)+\alpha_w = w^{-1}(\theta-\rho) + (w^{-1}(\rho) + \alpha_w) = (\theta-\rho) + \rho = \theta
\end{equation*}
pour tout $w \in W$.
\end{proof}

\subsection{Variante pour les induites paraboliques}

Soient $P \subset G$ un sous-groupe parabolique standard et $L \subset P$ le sous-groupe de Levi standard. On reprend les notations de la sous-section \ref{sec:para}. On calcule le $\delta$-foncteur $\HOrd$ sur les induites paraboliques d'un caractère.

\subsubsection*{Calculs en degré quelconque}

Soient $\eta :F^\times \to A^\times$ un caractère lisse et $\detfr$ un caractère algébrique de $G$.
Le lemme \ref{lemm:filx} et sa démonstration sont vrais tels quels avec $P^-$, $L$ et $\eta \circ \detfr$ au lieu de $B^-$, $T$ et $U$ et les suites exactes \eqref{filindP} et \eqref{filcohomP} au lieu de \eqref{filind} et \eqref{filcohom}. En utilisant le lemme \ref{lemm:x}, on voit que pour tout $r \in \llbrack 0,d \rrbrack$ on a des suites exactes courtes de représentations lisses de $T(F)$ sur $A$
\begin{multline} \label{filhordP}
0 \to \HOrd (I^P_{r-1}) \to \HOrd (I^P_r) \\
\to \bigoplus_{\ell(\wP)=r} \HOrd (\Clis_c(N_{\wP}(F),\eta \circ \detfr)) \to 0.
\end{multline}
Soit $n \in \Nbb$. On suppose $A=\ke$ ou $n \leq 1$. En utilisant les suites exactes \eqref{filhordP} pour tout $r \in \llbrack 0,d \rrbrack$ et le théorème \ref{theo:Clis} avec $U=\eta \circ \detfr$, on obtient un isomorphisme $T(F)$-équivariant
\begin{equation} \label{hordeta}
\HOrd[n] \left( \IndP \eta \circ \detfr \right) \cong \bigoplus_{[F:\Qp] \cdot \ell(\wP) = n} (\eta \circ \detfr) \cdot (\oma_{\wP}).
\end{equation}

\subsubsection*{Calculs en degrés $0$ et $1$}

On suppose $P \neq B$ et on fixe un caractère algébrique de $G$ noté $\detfr$.

\begin{coro} \label{coro:eta0}
Soit $\eta : F^\times \to A^\times$ un caractère lisse. Alors
\begin{equation*}
\Ord \left( \IndP \eta \circ \detfr \right) = 0.
\end{equation*}
\end{coro}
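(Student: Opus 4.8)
The plan is to read the statement off directly from the computation \eqref{hordeta} in cohomological degree $n=0$. First I would recall that, by construction of the $\delta$-foncteur, one has $\Ord = \HOrd[0]$; since the hypothesis under which \eqref{hordeta} holds (namely $A=\ke$ or $n\le 1$) is automatically satisfied when $n=0$, for \emph{any} $A$, it provides a $T(F)$-equivariant isomorphism
\[
\Ord\left(\IndP \eta \circ \detfr\right) \cong \bigoplus_{[F:\Qp]\cdot\ell(\wP) = 0}(\eta \circ \detfr)\cdot(\oma_{\wP}),
\]
the direct sum running over those $\wP \in \WP$ with $\ell(\wP)=0$, that is, with $\wP=1$.

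It then remains to observe that $1 \notin \WP$ as soon as $P \neq B$, so that the index set above is empty and the corollary follows at once. For this I would invoke the characterisation of the elements of $\WP$ recalled just before their definition: $\wP \in \WP$ if and only if $\ell(w_L\wP) = \ell(\wP) - \ell(w_L)$ for every $w_L \in W_L$. Taking $w_L = w_{L,0}$ gives $\ell(\wP) = \ell(w_{L,0}\wP) + \ell(w_{L,0}) \ge \ell(w_{L,0})$ for all $\wP \in \WP$; and since $P \neq B$ forces $L \neq T$, hence $W_L \neq \{1\}$ and $\ell(w_{L,0}) \ge 1$, one gets $\ell(\wP) \ge 1$. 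In particular no element of $\WP$ has length $0$, the displayed direct sum is empty, and $\Ord(\IndP \eta\circ\detfr)=0$.

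There is no serious obstacle here: all the substance is already packaged in \eqref{hordeta}, and the only additional input is the elementary fact that the set of maximal-length representatives of the cosets $W_L\backslash W$ contains no element of length $0$ unless $W_L$ is trivial, i.e.\ unless $P=B$. One could equally argue that $1$ is of maximal length in $W_L\cdot 1 = W_L$ precisely when $w_{L,0}=1$, which again characterises $P=B$.
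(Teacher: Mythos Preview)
Your proof is correct and follows essentially the same approach as the paper: both apply \eqref{hordeta} with $n=0$ and then observe that every $\wP \in \WP$ satisfies $\ell(\wP) \geq \ell(w_{L,0}) > 0$ since $P \neq B$, so the index set of the direct sum is empty. Your derivation of the inequality $\ell(\wP) \geq \ell(w_{L,0})$ from the characterisation of $\WP$ is exactly what the paper's phrase ``par d\'efinition de $\WP$'' unpacks.
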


\begin{proof}
Soit $\wP \in \WP$. Par définition de $\WP$, on a $\ell(\wP) \geq \ell(w_{L,0})$ avec égalité si et seulement si $\wP = w_{L,0}$. Comme $P \neq B$, on a $w_{L,0} \neq 1$ donc $\ell(\wP)>0$, d'où $[F: \Qp] \cdot \ell(\wP)>0$. On en déduit que la somme directe de l'isomorphisme \eqref{hordeta} avec $n=0$ est nulle.
\end{proof}

Pour tout $\alpha \in \Delta$, on note $P_\alpha$ le sous-groupe parabolique standard de $G$ correspondant.

\begin{coro} \label{coro:eta1}
Soit $\eta : F^\times \to A^\times$ un caractère lisse. Alors
\begin{equation*}
\HOrd[1] \left( \IndP \eta \circ \detfr \right) = 0
\end{equation*}
sauf si $F=\Qp$ et $P=P_\alpha$ avec $\alpha \in \Delta$, auquel cas on a un isomorphisme $T(\Qp)$-équivariant
\begin{equation*}
\HOrdQp \left( \IndPa \eta \circ \detfr \right) \cong (\eta \circ \detfr) \cdot (\oma).
\end{equation*}
\end{coro}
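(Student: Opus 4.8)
The plan is to read the statement off from the isomorphism \eqref{hordeta} specialised to $n=1$, exactly as Corollaire \ref{coro:eta0} was deduced from \eqref{hordeta} with $n=0$.

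First I would determine the set of $\wP \in \WP$ for which $[F:\Qp] \cdot \ell(\wP) = 1$. Such an index can exist only if $F = \Qp$ and $\ell(\wP) = 1$. As recalled in the proof of Corollaire \ref{coro:eta0}, every $\wP \in \WP$ satisfies $\ell(\wP) \geq \ell(w_{L,0})$, with equality if and only if $\wP = w_{L,0}$; and since $P \neq B$ we have $w_{L,0} \neq 1$, hence $\ell(w_{L,0}) \geq 1$. Combining this with $\ell(\wP) = 1$ forces $\ell(w_{L,0}) = 1$ and $\wP = w_{L,0}$. An element of $W$ of length $1$ is a simple reflection, so $w_{L,0} = s_\alpha$ for some $\alpha \in \Delta$; and if $W_L$ contained another simple reflection $s_\beta$ then $s_\alpha s_\beta \in W_L$ would have length $2$, contradicting the maximality of $\ell(w_{L,0}) = 1$. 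Hence $W_L = \{1,s_\alpha\}$ and $P = P_\alpha$, with $\wP = s_\alpha$ the unique index contributing to \eqref{hordeta} for $n=1$. In particular, when $F \neq \Qp$, or when $F = \Qp$ but $P$ is not of the form $P_\alpha$, the direct sum in \eqref{hordeta} with $n=1$ is empty, so $\HOrd[1]\!\left(\IndP \eta \circ \detfr\right) = 0$.

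It then remains, in the case $F = \Qp$ and $P = P_\alpha$, to identify the single summand $(\eta \circ \detfr) \cdot (\oma_{\wP})$ with $\wP = s_\alpha$; that is, to compute the algebraic character $\alpha_{s_\alpha}$ of the adjoint action of $T$ on $\det_F \Lie(N_{w_0 s_\alpha})$. Through the root decomposition of $\Lie(N)$, the subgroup $N_{s_\alpha}$ corresponds to $\Phi^+ - \{\alpha\}$, so from $N_{s_\alpha} N_{w_0 s_\alpha} = N$ the complementary factor $N_{w_0 s_\alpha}$ is the root subgroup $N_\alpha$, whose Lie algebra is one-dimensional over $F$ with $T$ acting through $\alpha$. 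Hence $\alpha_{s_\alpha} = \alpha$ --- the identity already used in the proof of Corollaire \ref{coro:H1Ord} --- and the summand is $(\eta \circ \detfr) \cdot (\oma)$, as asserted.

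I do not expect any genuine obstacle: the corollary is a direct specialisation of \eqref{hordeta}, whose proof via the filtration \eqref{filhordP} and Théorème \ref{theo:Clis} is already in place. The closest thing to a subtlety is the combinatorial point that $\ell(\wP) = 1$ genuinely forces $P = P_\alpha$, and not merely $\wP = s_\alpha$ for an arbitrary $\alpha \in \Delta$; this is exactly where the constraints $\wP \in \WP$ and $P \neq B$ enter.
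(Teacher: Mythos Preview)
Your proposal is correct and follows essentially the same approach as the paper: specialising \eqref{hordeta} to $n=1$, arguing that $[F:\Qp]\cdot\ell(\wP)=1$ forces $F=\Qp$ and $\wP=w_{L,0}=s_\alpha$ (hence $P=P_\alpha$), and concluding via $\alpha_{s_\alpha}=\alpha$. Your write-up is simply more explicit about the combinatorics of $W_L$ and the computation of $\alpha_{s_\alpha}$ than the paper's terse version.
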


\begin{proof}
Soit $\wP \in \WP$. Si $[F:\Qp] \cdot \ell(\wP)=1$, alors $[F:\Qp]=\ell(\wP)=1$ donc $F=\Qp$ et $\wP = w_{L,0} = s_\alpha$ avec $\alpha \in \Delta$, d'où $P=P_\alpha$. On en déduit que l'isomorphisme \eqref{hordeta} avec $n=1$ est non nul si et seulement si $F=\Qp$ et $P=P_\alpha$ avec $\alpha \in \Delta$, auquel cas on obtient bien l'isomorphisme de l'énoncé (en utilisant le fait que $\alpha_{s_\alpha}=\alpha$).
\end{proof}

\section{Application aux extensions} \label{sec:5}

Nous étudions les caractères de $T(F)$ et leurs extensions. Puis, nous calculons les $\Ext^1$ entre certaines induites de $G(F)$ dans la catégories des représentations continues unitaires admissibles de $G(F)$ sur $E$. Nos démonstrations prouvent également les résultats analogues dans la catégorie des représentations lisses admissibles de $G(F)$ sur $\ke$. Enfin, en supposant vraie la conjecture \ref{conj:ROrd}, nous calculons les $\Ext^\bullet$ entre certaines induites de $G(F)$ dans la catégorie des représentations lisses localement admissibles de $G(F)$ sur $\ke$.

\subsection{\texorpdfstring{Caractères de $T(F)$ et extensions}{Caractères de T(F) et extensions}}

On définit et on étudie des notions de généricité pour les caractères de $T(F)$. On détermine ensuite les extensions entre ces derniers.

\subsubsection*{Caractères génériques}

Soit $\chi$ un caractère de $T(F)$ à valeurs dans le groupe des unités d'un anneau quelconque. Si $w \in W$, alors on définit un caractère $w(\chi)$ de $T(F)$ par $w(\chi)(t) = \chi(\w^{-1} t \w)$ pour tout $t \in T(F)$.

\begin{defi}
On dit que $\chi$ est :
\begin{itemize}
\item \emph{faiblement générique} si $s_\alpha(\chi) \neq \chi$ pour tout $\alpha \in \Delta$ ;
\item \emph{fortement générique} si $w(\chi) \neq \chi$ pour tout $w \in W -\{1\}$.
\end{itemize}
\end{defi}

On étudie ces notions de généricité dans les lemmes ci-dessous. En particulier, on les compare à \cite[Définition 3.3.1]{BH}.

\begin{lemm} \label{lemm:gen1}
Soit $\alpha \in \Phi^+$.
\begin{enumerate}[(i)]
\item Si $s_\alpha(\chi) \neq \chi$, alors $\chi \circ \alpha^\vee \neq 1$.
\item Si le centre de $G$ est connexe, alors la réciproque est vraie.
\end{enumerate}
\end{lemm}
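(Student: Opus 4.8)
Le plan est de déduire l'énoncé d'une formule explicite comparant les caractères $s_\alpha(\chi)$ et $\chi \circ \alpha^\vee$. Tout d'abord, l'action par conjugaison (via un représentant de $s_\alpha$ dans le normalisateur de $T$) sur $T(F)$ coïncide avec l'action naturelle du groupe de Weyl sur le tore $T$, laquelle envoie $t \in T(F)$ sur $t \cdot \alpha^\vee(\alpha(t))^{-1}$ : cela se vérifie sur les cocaractères à l'aide de l'égalité $s_\alpha(\mu) = \mu - \langle \alpha, \mu \rangle \alpha^\vee$ dans $X_*(T)$ et du fait que $\alpha(\mu(x)) = x^{\langle \alpha, \mu \rangle}$. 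En composant avec $\chi$, on obtient pour tout $t \in T(F)$
\[
s_\alpha(\chi)(t) = \chi(t) \cdot (\chi \circ \alpha^\vee)(\alpha(t))^{-1},
\]
de sorte que $s_\alpha(\chi) = \chi$ si et seulement si le caractère $\chi \circ \alpha^\vee$ de $F^\times$ est trivial sur le sous-groupe $\alpha(T(F))$.

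Le point (i) en résulte aussitôt : si $s_\alpha(\chi) \neq \chi$, il existe $t \in T(F)$ tel que $(\chi \circ \alpha^\vee)(\alpha(t)) \neq 1$, et comme $\alpha(t) \in F^\times$ ceci montre que $\chi \circ \alpha^\vee \neq 1$ (et cet argument vaut pour tout $\alpha \in \Phi^+$). Pour le point (ii), c'est-à-dire la réciproque de (i) lorsque le centre de $G$ est connexe, il suffit de montrer que $\alpha$ induit une surjection $T(F) \twoheadrightarrow F^\times$ : dans ce cas la trivialité de $\chi \circ \alpha^\vee$ sur $\alpha(T(F)) = F^\times$ équivaut à $\chi \circ \alpha^\vee = 1$, donc $\chi \circ \alpha^\vee \neq 1$ entraîne $s_\alpha(\chi) \neq \chi$.

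Pour établir cette surjectivité, j'utiliserais le fait classique que le centre de $G$ est connexe si et seulement si le réseau des racines $\Z \Phi$ est saturé dans $X^*(T)$ (autrement dit $X^*(T)/\Z \Phi$ est sans torsion). Comme toute racine est un vecteur primitif du réseau des racines (ses coefficients dans la base $\Delta$ sont premiers entre eux), $\alpha$ est alors primitif dans $X^*(T)$, donc il existe $\mu \in X_*(T)$ tel que $\langle \alpha, \mu \rangle = 1$ ; ce cocaractère fournit une section $\mu : F^\times \to T(F)$ du morphisme $\alpha$, d'où la surjectivité. Le point délicat est précisément cette dernière étape — le passage de la connexité du centre à la primitivité de $\alpha$ dans $X^*(T)$ — le reste de la preuve étant formel.
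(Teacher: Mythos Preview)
Your argument is correct. The paper's proof is organised a little differently: it first reduces to $\alpha \in \Delta$ (the statement being independent of the choice of $B$) and then works cocharacter by cocharacter. For (i) it checks that $\chi \circ \alpha^\vee = 1$ forces $s_\alpha(\chi) \circ \lambda = \chi \circ \lambda$ for every $\lambda \in X_*(T)$, and for (ii) it simply takes $\lambda$ to be the fundamental coweight relative to $\alpha$ (which lies in $X_*(T)$ precisely because the centre is connected), so that $\lambda - s_\alpha(\lambda) = \alpha^\vee$. Your version packages the same computation into the global identity $s_\alpha(t) = t \cdot \alpha^\vee(\alpha(t))^{-1}$ on $T(F)$ and then reduces (ii) to the surjectivity of $\alpha : T(F) \twoheadrightarrow F^\times$. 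Both approaches ultimately hinge on producing a cocharacter $\mu$ with $\langle \alpha, \mu \rangle = 1$; the paper's preliminary reduction to simple roots makes this immediate via the fundamental coweight, whereas your route requires the auxiliary fact that every root is primitive in the root lattice --- true (any root is $W$-conjugate to a simple root and $W$ preserves $\Z\Phi$), but a small detour that the paper's reduction sidesteps.
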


\begin{proof}
Le lemme ne dépendant pas de $B$, on peut supposer $\alpha \in \Delta$.
Si $\chi \circ \alpha^\vee = 1$, alors pour tout cocaractère algébrique $\lambda$ de $T$ on a
\begin{equation*}
s_\alpha(\chi) \circ \lambda = \chi \circ s_\alpha(\lambda) = \chi \circ (\lambda - \langle \alpha,\lambda \rangle \alpha^\vee) = (\chi \circ \lambda) \cdot (\chi \circ \alpha^\vee)^{-\langle \alpha,\lambda \rangle} =\chi \circ \lambda.
\end{equation*}
Comme les images des cocaractères algébriques de $T$ engendrent $T(F)$, on en déduit le point (i).
Réciproquement si $s_\alpha(\chi)=\chi$, alors pour tout cocaractère algébrique $\lambda$ de $T$ on a
\begin{equation*}
\chi \circ (\lambda-s_\alpha(\lambda)) = (\chi \circ \lambda) \cdot (\chi \circ s_\alpha(\lambda))^{-1} = (\chi \circ \lambda) \cdot (s_\alpha(\chi) \circ \lambda)^{-1}=1.
\end{equation*}
Si le centre de $G$ est connexe, alors avec $\lambda$ le copoids fondamental relatif à $\alpha$ on a $\lambda-s_\alpha(\lambda)=\alpha^\vee$, d'où le point (ii).
\end{proof}

\begin{lemm} \label{lemm:gen2}
Soit $\alpha \in \Delta$. On suppose le centre de $G$ connexe et $s_\alpha(\chi) \neq \chi$. Alors pour tout $\beta \in \Delta$, on a $s_\alpha(\chi) = s_\beta(\chi)$ si et seulement si $\alpha=\beta$.
\end{lemm}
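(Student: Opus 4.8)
Le plan est de traiter l'implication « seulement si », l'implication « si » étant triviale. Je procéderais par l'absurde : on suppose $\alpha \neq \beta$ et $s_\alpha(\chi) = s_\beta(\chi)$, et on cherche à en déduire $s_\alpha(\chi) = \chi$, ce qui contredira l'hypothèse.

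D'abord, j'utiliserais l'hypothèse de connexité du centre de $G$ exactement comme dans la preuve du lemme \ref{lemm:gen1}(ii) : elle garantit que le copoids fondamental $\mu$ relatif à $\beta$ est un véritable cocaractère algébrique de $T$, caractérisé par $\langle\gamma,\mu\rangle = \delta_{\beta\gamma}$ pour tout $\gamma \in \Delta$. En particulier $\langle\beta,\mu\rangle = 1$ et, puisque $\alpha \neq \beta$, $\langle\alpha,\mu\rangle = 0$.

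Ensuite, j'évaluerais l'égalité $s_\alpha(\chi) = s_\beta(\chi)$ sur $\mu$ : comme $s_\alpha$ et $s_\beta$ sont des involutions, on a $w(\chi) \circ \mu = \chi \circ w^{-1}(\mu)$ pour $w \in \{s_\alpha,s_\beta\}$, de sorte que l'égalité se réécrit $\chi \circ s_\alpha(\mu) = \chi \circ s_\beta(\mu)$, c'est-à-dire $\chi \circ (s_\alpha(\mu) - s_\beta(\mu)) = 1$. En utilisant $s_\gamma(\mu) = \mu - \langle\gamma,\mu\rangle\gamma^\vee$ avec $\langle\alpha,\mu\rangle = 0$ et $\langle\beta,\mu\rangle = 1$, on obtient $s_\alpha(\mu) - s_\beta(\mu) = \beta^\vee$, d'où $\chi \circ \beta^\vee = 1$.

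Enfin, d'après le lemme \ref{lemm:gen1}(i) pris sous forme contraposée, $\chi \circ \beta^\vee = 1$ entraîne $s_\beta(\chi) = \chi$, donc $s_\alpha(\chi) = s_\beta(\chi) = \chi$, ce qui contredit $s_\alpha(\chi) \neq \chi$. Il n'y a pas ici de véritable obstacle ; le seul point un peu délicat est de vérifier que le calcul de $s_\alpha(\mu) - s_\beta(\mu)$ retombe exactement sur $\beta^\vee$ — c'est précisément là qu'interviennent à la fois la distinction $\alpha \neq \beta$ (qui donne $\langle\alpha,\mu\rangle = 0$) et la connexité du centre (qui rend $\mu$ algébrique) — ainsi que le maniement correct de l'action à gauche de $W$ sur les caractères.
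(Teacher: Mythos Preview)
Your argument is correct and follows essentially the same strategy as the paper: exploit a fundamental coweight (available because the centre is connected) to reduce the equality $s_\alpha(\chi)=s_\beta(\chi)$ to a statement about $\chi\circ\gamma^\vee$ for a simple root $\gamma$, then invoke Lemma~\ref{lemm:gen1}. The only cosmetic difference is that the paper takes $\lambda$ to be the fundamental coweight relative to $\alpha$ and shows directly that $(\chi\circ\lambda)\cdot(s_\beta s_\alpha(\chi)\circ\lambda)^{-1}=\chi\circ\alpha^\vee\neq 1$ when $\beta\neq\alpha$, whereas you take the coweight relative to $\beta$ and argue by contradiction via $\chi\circ\beta^\vee=1$; both routes are equally short.
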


\begin{proof}
Soit $\beta \in \Delta$. Pour tout cocaractère algébrique $\lambda$ de $T$, on a
\begin{equation*}
(\chi \circ \lambda) \cdot (s_\beta s_\alpha(\chi) \circ \lambda)^{-1} = \chi \circ (\lambda-s_\alpha s_\beta(\lambda)) = \chi \circ ((\lambda-s_\alpha(\lambda))+s_\alpha(\lambda-s_\beta(\lambda))).
\end{equation*}
Avec $\lambda$ le copoids fondamental relatif à $\alpha$, on a $\lambda-s_\alpha(\lambda)=\alpha^\vee$ et si $\beta \neq \alpha$, alors $\lambda-s_\beta(\lambda)=0$, d'où
\begin{equation*}
\chi \circ ((\lambda-s_\alpha(\lambda))+s_\alpha(\lambda-s_\beta(\lambda))) = \chi \circ \alpha^\vee \neq 1
\end{equation*}
d'après le lemme \ref{lemm:gen1} et on en déduit que $s_\beta s_\alpha(\chi) \neq \chi$.
\end{proof}

\subsubsection*{Extensions entre caractères}

On calcule les $\Ext^\bullet$ entre caractères lisses de $T(F)$ dans la catégorie des représentations lisses localement admissibles de $T(F)$ sur $\ke$ en utilisant une résolution injective. En degré $1$, il revient au même de calculer les $\Ext^1$ dans la catégorie des représentations lisses admissibles de $T(F)$ sur $\ke$ en utilisant les extensions de Yoneda (voir \cite[Proposition 2.2.13]{Em1}).

\begin{prop} \label{prop:extTke}
Soit $\chi : T(F) \to \ke^\times$ un caractère lisse.
\begin{enumerate}[(i)]
\item Soit $\chi' : T(F) \to \ke^\times$ un autre caractère lisse. Si $\chi' \neq \chi$, alors
\begin{equation*}
\Ext_{T(F)}^\bullet(\chi',\chi)=0.
\end{equation*}
\item Si $F$ ne contient pas de racine primitive $p$-ième de l'unité, alors
\begin{equation*}
\dim_{\ke} \Ext_{T(F)}^1(\chi,\chi) = \left( [F:\Qp] + 1 \right) \rg(G).
\end{equation*}
\item Si $F$ contient une racine primitive $p$-ième de l'unité, alors
\begin{equation*}
\dim_{\ke} \Ext_{T(F)}^1(\chi,\chi) = \left( [F:\Qp] + 2 \right) \rg(G).
\end{equation*}
\end{enumerate}
\end{prop}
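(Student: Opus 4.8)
Le plan est de se ramener au calcul de la cohomologie lisse de $T(F)$ à coefficients dans un caractère lisse, puis d'exploiter la structure de $T(F) \cong (F^\times)^{\rg(G)}$ ($T$ étant déployé de rang $\rg(G)$). Première étape, une réduction : pour $\chi,\chi'$ des caractères lisses, le produit tensoriel par $(\chi')^{-1}$ étant une auto-équivalence exacte de la catégorie des représentations lisses localement admissibles de $T(F)$ sur $\ke$ (un twist de dimension $1$ d'une représentation localement admissible l'est encore), on a $\Ext^\bullet_{T(F)}(\chi',\chi) \cong \Ext^\bullet_{T(F)}(\mathbf{1},\psi)$ avec $\psi \dfn \chi(\chi')^{-1}$. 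En calculant ce dernier via une résolution injective (comme annoncé dans le texte), on l'identifie à la cohomologie lisse $\Hc(T(F),\psi)$ calculée avec des cochaînes localement constantes ; en degré $1$, c'est \cite[Proposition 2.2.13]{Em1} qui garantit en outre la coïncidence avec l'$\Ext^1$ de Yoneda dans la catégorie admissible.

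Deuxième étape, la structure de $T(F)$ : après choix d'une uniformisante de $F$, on a $F^\times \cong \Z \times \mu_{q-1}(F) \times U$ avec $q$ le cardinal du corps résiduel de $F$, $U = 1+\mathfrak{m}_F$ un pro-$p$-groupe commutatif, et $U \cong \Zp^{[F:\Qp]} \times \mu_{p^\infty}(F)$ où $\mu_{p^\infty}(F)$ est cyclique d'ordre $p^a$ (avec $a \geq 1$ si et seulement si $F$ contient une racine primitive $p$-ième de l'unité). En élevant à la puissance $\rg(G)$ : $T(F) \cong \Z^{\rg(G)} \times C \times U^{\rg(G)}$ avec $C \dfn \mu_{q-1}(F)^{\rg(G)}$ fini d'ordre premier à $p$. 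Le point-clé est que, $\ke^\times$ étant d'ordre premier à $p$ et $U^{\rg(G)}$ pro-$p$, tout caractère lisse $\psi$ de $T(F)$ est trivial sur $U^{\rg(G)}$, donc inflaté de $T' \dfn T(F)/U^{\rg(G)} \cong \Z^{\rg(G)} \times C$.

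Troisième étape, le point (i) : si $\chi' \neq \chi$, alors $\psi$ est un caractère non trivial de $T'$, et j'utiliserais la suite spectrale de Hochschild--Serre de $1 \to U^{\rg(G)} \to T(F) \to T' \to 1$, soit $E_2^{i,j} = \Hc[i](T', \Hc[j](U^{\rg(G)},\ke) \otimes_{\ke}\psi)$ (l'action par conjugaison étant triviale puisque $T(F)$ est abélien) ; le terme intérieur est une somme directe de copies de $\psi$, et $\Hc(T',\psi) = 0$ : en dévissant $T' = \Z^{\rg(G)} \times C$, le facteur $C$ d'ordre premier à $p$ n'apporte que ses invariants, d'où $\Hc(T',\psi) \cong \Hc(\Z^{\rg(G)},\psi^C)$, qui est nul car $\psi^C = 0$ lorsque $\psi|_C \neq 1$, et car $\Hc(\Z,\lambda) = 0$ dès que $\lambda \neq 1$ lorsque $\psi|_C = 1$ (l'un au moins des facteurs $\Z$ portant alors un caractère non trivial) ; d'où $E_2 = 0$ et $\Ext^\bullet_{T(F)}(\chi',\chi) = 0$. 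Quatrième étape, les points (ii) et (iii) : si $\chi' = \chi$, alors $\psi = \mathbf{1}$ et $\Ext^1_{T(F)}(\chi,\chi) \cong \Hc[1](T(F),\ke) = \Homcont(T(F),(\ke,+))$, ce qui est élémentaire en degré $1$ ; avec $T(F) \cong (F^\times)^{\rg(G)}$ et la décomposition précédente, $\Homcont(\Z,\ke)$ est de dimension $1$, $\Homcont(\mu_{q-1}(F),\ke) = 0$, $\Homcont(\Zp^{[F:\Qp]},\ke)$ de dimension $[F:\Qp]$, et $\Homcont(\mu_{p^\infty}(F),\ke) \cong \Hom(\Z/p^a,\ke)$ de dimension $1$ si $a \geq 1$ et nulle sinon ; en sommant et en multipliant par $\rg(G)$ on obtient $([F:\Qp]+1)\rg(G)$ resp. $([F:\Qp]+2)\rg(G)$, soit les formules voulues.

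L'obstacle principal me paraît double. D'une part, il faut mener proprement le traitement de la torsion de $\mathcal{O}_F^\times$ : la partie d'ordre premier à $p$ $\mu_{q-1}(F)$ (qui n'intervient qu'en annulant le $H^0$ lorsque $\psi$ y est non trivial) et la torsion $p$-primaire $\mu_{p^\infty}(F) \subset U$, qui est la seule source de la différence entre (ii) et (iii). D'autre part, dans (i), il faut justifier l'effondrement de la suite spectrale : le $\ke$-espace $\Hc[j](U^{\rg(G)},\ke)$ peut être de dimension totale infinie, mais il est de dimension finie en chaque degré (les cohomologies des facteurs $\Zp$ et $\Z/p^a$ l'étant, par Künneth), de sorte que $\Hc[i](T',-)$ commute aux sommes directes en jeu et s'annule terme à terme.
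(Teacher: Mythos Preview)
Your treatment of (ii) and (iii) is essentially identical to the paper's: reduce to $\Ext^1_{T(F)}(\un,\un) \cong \HomGr(T(F),\ke)$ and count dimensions from the structure of $F^\times$.

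The gap is in (i). You identify $\Ext^\bullet_{T(F)}(\un,\psi)$ with the cochain cohomology $\Hc(T(F),\psi)$, but this is precisely what the paper flags as \emph{unknown} in the Remarque following the Proposition: for $n>1$ and $T(F)$ non-compact, the $\delta$-foncteur $\Hc(T(F),-)$ of locally constant cochains is not known to be effaceable in the locally admissible category, so there is no reason for the canonical map $\Ext^n_{T(F)}(\un,\un)\to\Hc[n](T(F),\un)$ (or its $\psi$-twisted analogue) to be an isomorphism. Your entire Hochschild--Serre computation for (i) takes place on the $\Hc$ side and therefore does not touch the $\Ext$ groups in degree $>1$.

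The paper instead cites \cite[Lemme 4.3.10]{Em2}, whose argument is direct and avoids cohomology altogether: since $T(F)$ is abelian, every $t\in T(F)$ acts on $\Ext^\bullet_{T(F)}(\un,\psi)$ both through the first variable (trivially) and through the second (by the scalar $\psi(t)$), and these two actions coincide by functoriality. Hence $(\psi(t)-1)$ annihilates $\Ext^\bullet_{T(F)}(\un,\psi)$ for every $t$, and choosing $t$ with $\psi(t)\neq 1$ in $\ke^\times$ gives the vanishing. This works uniformly in all degrees and in any abelian category of $T(F)$-representations, with no need to compare with cochain cohomology.
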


\begin{proof}
Le point (i) est une généralisation immédiate de \cite[Lemme 4.3.10]{Em2}. Pour les points (ii) et (iii), on utilise les isomorphismes $\ke$-linéaires suivants
\begin{equation*}
\Ext_{T(F)}^1 \left( \chi,\chi \right) \cong \Ext_{T(F)}^1 \left( \un,\un \right) \cong \HomGr(T(F),\ke)
\end{equation*}
avec $\un$ la représentation triviale de $T(F)$ sur $\ke$ et $\HomGr$ les morphismes de groupes continus. On a des isomorphismes de groupes topologiques
\begin{equation*}
T(F) \cong \left( F^\times \right)^{\rg(G)} \quad \text{et} \quad F^\times \cong \Z \times \left( \Z/q\Z \right)^\times \times \left( \Z/p^r\Z \right) \times \Zp^{[F:\Qp]}
\end{equation*}
avec $q$ le cardinal du corps résiduel de $F$ et $r$ le plus grand entier naturel tel que $F$ contienne une racine primitive $p^r$-ième de l'unité. Les $\ke$-espaces vectoriels $\HomGr(\Z,\ke)$ et $\HomGr(\Zp,\ke)$ sont de dimension $1$. La dimension du $\ke$-espace vectoriel $\HomGr(\Z/p^r\Z,\ke)$ est $1$ si $r>0$ et $0$ sinon. Enfin $(\Z/q\Z)^\times$ est d'ordre premier à $p$, donc $\HomGr((\Z/q\Z)^\times,\ke)=0$. On obtient ainsi les points (ii) et (iii).
\end{proof}

\begin{rema}
Pour tout entier $n>1$, on ne sait pas si l'application canonique
\begin{equation*}
\Ext_{T(F)}^n \left( \un,\un \right) \to \Hc[n] \left( T(F),\un \right)
\end{equation*}
est un isomorphisme car $T(F)$ n'est pas compact (voir \cite[§ 2.2]{Em2}).
\end{rema}

On calcule les $\Ext^1$ entre caractères continus unitaires de $T(F)$ dans la catégorie des représentations continues unitaires admissibles de $T(F)$ sur $E$ en utilisant les extensions de Yoneda.

\begin{prop} \label{prop:extTE}
Soient $\chi,\chi' : T(F) \to \Oe^\times \subset E^\times$ des caractères continus unitaires. Alors
\begin{equation*}
\Ext_{T(F)}^1(\chi',\chi)=0
\end{equation*}
sauf si $\chi' = \chi$, auquel cas
\begin{equation*}
\dim_E \Ext_{T(F)}^1(\chi,\chi) = \left( [F:\Qp] + 1 \right) \rg(G).
\end{equation*}
\end{prop}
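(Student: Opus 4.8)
The plan is to prove the two assertions separately: first the vanishing when $\chi'\neq\chi$, then the dimension formula when $\chi'=\chi$, the latter by twisting down to the trivial character exactly as in the proof of Proposition \ref{prop:extTke}.

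Suppose first $\chi'\neq\chi$. Since $\chi$ and $\chi'$ differ as functions on $T(F)$, I would pick $t_0\in T(F)$ with $a:=\chi(t_0)$ and $b:=\chi'(t_0)$ distinct units of $\Oe$. Given any extension $0\to\chi\to V\xrightarrow{\pi}\chi'\to0$ of continuous unitary admissible representations on $E$-Banach spaces, one has $\pi((t_0-b)v)=0$ for every $v\in V$, so $(t_0-b)v$ lies in the subrepresentation $\chi$, on which $t_0$ acts by $a$; hence $(t_0-a)(t_0-b)=0$ as a bounded operator on $V$. As $b-a\in E^\times$, the bounded operator $e:=(b-a)^{-1}(t_0-a)$ is then an idempotent acting by $0$ on the sub $\chi$ and by $1$ on the quotient $\chi'$; thus $eV=\ker(1-e)$ is a closed subspace, stable under $T(F)$ since $T(F)$ is abelian, and $\pi$ restricts to an isomorphism $eV\xrightarrow{\sim}\chi'$. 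So $eV$ splits the extension, whence $\Ext^1_{T(F)}(\chi',\chi)=0$.

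Now suppose $\chi'=\chi$. Tensoring by $\chi^{-1}$ is an exact auto-equivalence of the category, so $\Ext^1_{T(F)}(\chi,\chi)\cong\Ext^1_{T(F)}(\un,\un)$. Any extension $0\to E\to V\to E\to0$ splits topologically, hence is classified by a continuous $1$-cocycle $T(F)\to E$; since the action on the sub and the quotient is trivial this cocycle is a continuous homomorphism and the only coboundary is $0$, giving an injection $\Ext^1_{T(F)}(\un,\un)\hookrightarrow\HomGr(T(F),E)$. Conversely I would check that every continuous homomorphism $c:T(F)\to E$ has bounded image (compact on the compact part of $T(F)$, and bounded by $\max_i|c(e_i)|$ on the finitely generated part), hence yields a genuine continuous unitary and (being $2$-dimensional) admissible representation; so the injection is an isomorphism. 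To finish I would reuse the isomorphisms $T(F)\cong(F^\times)^{\rg(G)}$ and $F^\times\cong\Z\times(\Z/q\Z)^\times\times(\Z/p^r\Z)\times\Zp^{[F:\Qp]}$ from the proof of Proposition \ref{prop:extTke}: the lines $\HomGr(\Z,E)$ and $\HomGr(\Zp,E)$ each contribute $1$, while \emph{both} finite factors now contribute $0$ because $E$ is torsion-free. This yields $\dim_E\Ext^1_{T(F)}(\chi,\chi)=([F:\Qp]+1)\,\rg(G)$, uniformly in whether or not $F$ contains a primitive $p$-th root of unity — the single difference with the modulo $p$ statement.

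The part I expect to require the most care is purely functional-analytic: that a polynomial in the bounded operator $t_0$ (of norm $\leq1$ by unitarity) is again a bounded operator, that $eV$ is a closed $T(F)$-stable subspace realizing the splitting, that the short exact sequences of Banach representations in the case $\chi'=\chi$ split topologically, and that the resulting $2$-dimensional representations genuinely belong to the category. All of this is standard and rests on the local compactness, hence spherical completeness, of $E$; I would simply invoke the generalities recalled in Appendix \ref{app:rep}.
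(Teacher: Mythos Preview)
Your proof is correct and follows essentially the same approach as the paper: for $\chi'\neq\chi$ the paper also picks $t$ with $\chi(t)\neq\chi'(t)$ and diagonalises (your idempotent $e$ makes this explicit), and for $\chi'=\chi$ it likewise reduces to $\HomGr(T(F),E)$ via the structure of $F^\times$, noting that $E$ being torsion-free kills the finite factors. Your additional care about boundedness, closedness and admissibility of the resulting two-dimensional representations is more detailed than the paper, but not a different method.
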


\begin{proof}
Si $\chi' \neq \chi$, alors il existe $t \in T(F)$ tel que $\chi'(t) \neq \chi(t)$, donc pour toute extension $\mathcal{E}$ de $\chi'$ par $\chi$, l'endomorphisme $t$ de $\mathcal{E}$ est diagonalisable (car son polynôme minimal est scindé à racines simples) et comme il commute avec $T(F)$, on obtient ainsi une section de l'extension $\mathcal{E}$. Si $\chi'=\chi$, alors la dimension se calcule comme dans la preuve des points (ii) et (iii) de la proposition \ref{prop:extTke}, sauf que cette fois $\HomGr(\Z/p^r\Z,E)=0$ quel que soit l'entier naturel $r$ puisque $E$ est sans torsion.
\end{proof}

\subsection{Extensions entre induites}

On détermine les extensions entre certaines induites de $G(F)$. On calcule les $\Ext^1$ dans la catégorie des représentations continues unitaires admissibles de $G(F)$ sur $E$ en utilisant les extensions de Yoneda. On démontre également les résultats analogues dans la catégorie des représentations lisses admissibles de $G(F)$ sur $\ke$.

\subsubsection*{Résultats dans le cas $F=\Qp$}

On suppose $F=\Qp$ et on fait les hypothèses suivantes\footnote{Dans un prochain article nous traiterons le cas général.} sur $G$ : son centre est connexe et son groupe dérivé est simplement connexe. On note $\theta$ la somme des poids fondamentaux relatifs à $\Delta$.

\begin{theo} \label{theo:ext1}
Soit $\chi : T(\Qp) \to  \Oe^\times \subset E^\times$ un caractère continu unitaire.
\begin{enumerate}[(i)]
\item Si $\chi' : T(\Qp) \to  \Oe^\times \subset E^\times$ est un autre caractère continu unitaire, alors
\begin{equation*}
\Ext^1_{G(\Qp)} \left( \IndQp \chi' \epsth,\IndQp \chi \epsth \right) \neq 0
\end{equation*}
si et seulement si $\chi'=\chi$ ou $\chi'=s_\alpha(\chi)$ avec $\alpha \in \Delta$.
\item Soit $\alpha \in \Delta$. Si $s_\alpha(\chi) \neq \chi$, alors
\begin{equation*}
\dim_E \Ext^1_{G(\Qp)} \left( \IndQp s_\alpha(\chi) \epsth,\IndQp \chi \epsth \right) = 1.
\end{equation*}
\item Si $\chi$ est faiblement générique, alors le foncteur $\IndQp$ induit un isomorphisme $E$-linéaire
\begin{multline*}
\Ext_{T(\Qp)}^1 \left( \chi \epsth, \chi \epsth \right) \\
\iso \Ext_{G(\Qp)}^1 \left( \IndQp  \chi \epsth,\IndQp \chi \epsth \right).
\end{multline*}
\end{enumerate}
\end{theo}

\begin{rema}
On s'attend à ce que le point (iii) soit vrai sans l'hypothèse de généricité. De même pour son analogue modulo $p$ sauf lorsque $p=2$, auquel cas on s'attend à ce que l'injection $\ke$-linéaire induite par le foncteur $\IndQp$ ait un conoyau de dimension
\begin{equation*}
\card \left\{\alpha \in \Delta ~|~ s_\alpha(\chi)=\chi\right\}.
\end{equation*}
\end{rema}

\begin{proof}
Soient $\chi,\chi' : T(\Qp) \to A^\times$ des caractères lisses. En utilisant le corollaire \ref{coro:HnOrd} avec $n=1$, la suite exacte \eqref{GroSE} avec $U = \chi' \omth$ et $V = \IndQp \chi \omth$ devient une suite exacte de $A$-modules
\begin{multline} \label{SEext}
0 \to \Ext_{T(\Qp)}^1 \left( \chi' \omth,\chi \omth \right) \\
\to \Ext_{G(\Qp)}^1\left( \IndQp \chi' \omth,\IndQp \chi \omth \right) \\
\to \bigoplus_{\beta \in \Delta} \Hom_{T(\Qp)} \left( \chi' \omth,s_\beta(\chi) \omth \right).
\end{multline}

On suppose $A=\ke$ et on prouve la version modulo $p$ du théorème. En utilisant la proposition \ref{prop:extTke}, on obtient les cas d'annulation du point (i) modulo $p$. Le premier cas de non annulation ($\chi'=\chi$) se déduit encore de la proposition \ref{prop:extTke}. On suppose $\chi' \neq \chi$ et on montre le second cas de non annulation ($\chi'=s_\alpha(\chi)$ avec $\alpha \in \Delta$). Dans ce cas, on construit comme dans \cite[§ 3.4]{BH} une telle extension non scindée par induction parabolique à partir de l'unique extension non scindée dans
\begin{equation*}
\Ext^1_{\GL_2(\Qp)} \left( \Indd s_\alpha(\chi) \omth,\Indd \chi \omth \right)
\end{equation*}
avec $B_2^-$ le sous-groupe des matrices triangulaires inférieures et $\alpha$ la racine de $\GL_2$ négative par rapport à $B_2^-$ (voir \cite[Proposition 4.3.15]{Em2}), d'où le point (i) modulo $p$.
Soit $\alpha \in \Delta$. On suppose $s_\alpha(\chi) \neq \chi$, donc $s_\alpha(\chi) \neq s_\beta(\chi)$ pour tout $\beta \in \Delta-\{\alpha\}$ d'après le lemme \ref{lemm:gen2}. En utilisant la proposition \ref{prop:extTke}, on voit que la suite exacte \eqref{SEext} avec $\chi'=s_\alpha(\chi)$ donne une injection $\ke$-linéaire
\begin{multline*}
\Ext_{G(\Qp)}^1 \left( \IndQp s_\alpha(\chi) \omth,\IndQp \chi \omth \right) \\
\hookrightarrow \Hom_{T(\Qp)} \left( s_\alpha(\chi) \omth,s_\alpha(\chi) \omth \right).
\end{multline*}
On en déduit que la dimension de la source est au plus $1$. Comme elle est non nulle d'après le point (i) modulo $p$, on en déduit que sa dimension est exactement $1$, d'où le point (ii) modulo $p$.
Enfin si $\chi$ est faiblement générique, alors $s_\beta(\chi) \neq \chi$ pour tout $\beta \in \Delta$, donc la suite exacte \eqref{SEext} avec $\chi'=\chi$ donne un isomorphisme $\ke$-linéaire
\begin{multline*}
\Ext_{T(\Qp)}^1 \left( \chi \omth,\chi \omth \right) \\
\iso \Ext_{G(\Qp)}^1 \left( \IndQp \chi \omth,\IndQp \chi \omth \right),
\end{multline*}
d'où le point (iii) modulo $p$.

On prouve maintenant le théorème. Soient $\chi,\chi' : T(\Qp) \to \Oe^\times \subset E^\times$ des caractères continus unitaires. Pour $k \geq 1$ entier, les suites exactes \eqref{SEext} avec $A=\A{k}$ et les réductions modulo $\pe^k$ des caractères $\chi$ et $\chi'$ forment un système projectif. En passant à la limite projective puis en tensorisant par $E$ sur $\Oe$, on obtient en utilisant \cite[Lemme 4.1.3]{Em1}, l'isomorphisme \eqref{homprojlim} et la proposition \ref{prop:extprojlim} une suite exacte de $E$-espaces vectoriels
\begin{multline*}
0 \to \Ext_{T(\Qp)}^1 \left( \chi' \epsth,\chi \epsth \right) \\
\to \Ext_{G(\Qp)}^1 \left( \IndQp \chi' \epsth,\IndQp \chi \epsth \right) \\
\to \bigoplus_{\beta \in \Delta} \Hom_{T(\Qp)} \left( \chi' \epsth,s_\beta(\chi) \epsth \right).
\end{multline*}
On utilise la proposition \ref{prop:extTE} pour calculer les extensions entre caractères continus unitaires de $T(\Qp)$ et on montre que si $\alpha \in \Delta$ et $s_\alpha(\chi) \neq \chi$, alors
\begin{equation*}
\Ext_{G(\Qp)}^1\left( \IndQp s_\alpha(\chi) \epsth,\IndQp \chi \epsth \right) \neq 0
\end{equation*}
en construisant comme dans \cite[§ 3.3]{BH} une telle extension non scindée par induction parabolique à partir de l'unique extension non scindée dans
\begin{equation*}
\Ext^1_{\GL_2(\Qp)} \left( \Indd s_\alpha(\chi) \epsth,\Indd \chi \epsth \right)
\end{equation*}
(ce $E$-espace vectoriel est de dimension $1$ d'après \cite[Proposition B.2]{BH}). Le reste de la démonstration est identique à la version modulo $p$. 
\end{proof}

Soient $P \subset G$ un sous-groupe parabolique standard et $\detfr$ un caractère algébrique de $G$. On suppose $P \neq B$.

\begin{prop}
Soient $\chi : T(\Qp) \to \Oe^\times \subset E^\times$ et $\eta :\Qp^\times \to \Oe^\times \subset E^\times$ des caractères continus unitaires. Alors
\begin{equation*}
\Ext^1_{G(\Qp)} \left( \IndQp \chi , \IndPQp \eta \circ \detfr \right) = 0
\end{equation*}
sauf si $P=P_\alpha$ avec $\alpha \in \Delta$ et $\chi = (\eta \circ \detfr) \cdot (\epsa)$, auquel cas
\begin{equation*}
\dim_E \Ext^1_{G(\Qp)} \left( \IndQp (\eta \circ \detfr) \cdot (\epsa) , \IndPa \eta \circ \detfr \right) = 1.
\end{equation*}
\end{prop}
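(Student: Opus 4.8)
L'idée est de reprendre la stratégie de la preuve du théorème \ref{theo:ext1} en y injectant cette fois les calculs des corollaires \ref{coro:eta0} et \ref{coro:eta1} relatifs aux induites paraboliques. Je commencerais par me placer sur $A = \A{k}$ ($k \geq 1$ entier), en remplaçant $\chi$ et $\eta$ par leurs réductions modulo $\pe^k$, puis par appliquer la suite exacte \eqref{GroSE} avec $U = \chi$ et $V = \IndPQp \eta \circ \detfr$. Comme $P \neq B$, le corollaire \ref{coro:eta0} donne $\Ord V = 0$, de sorte que \eqref{GroSE} fournit une injection
\[
\Ext^1_{G(\Qp)}\bigl(\IndQp \chi, \IndPQp \eta \circ \detfr\bigr) \hookrightarrow \Hom_{T(\Qp)}\bigl(\chi, \HOrd[1](\IndPQp \eta \circ \detfr)\bigr).
\]
D'après le corollaire \ref{coro:eta1}, le membre de droite est nul sauf si $P = P_\alpha$ avec $\alpha \in \Delta$, auquel cas $\HOrdQp(\IndPa \eta \circ \detfr) \cong (\eta \circ \detfr) \cdot (\oma)$. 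On obtient ainsi, sur $A$, la nullité du $\Ext^1$ lorsque $P \neq P_\alpha$ et, lorsque $P = P_\alpha$, une injection dans $\Hom_{T(\Qp)}(\chi, (\eta\circ\detfr)\cdot(\oma))$.

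Je passerais ensuite à la limite exactement comme à la fin de la preuve du théorème \ref{theo:ext1} : les suites exactes ci-dessus pour $A = \A{k}$, $k \geq 1$, forment un système projectif, et en prenant la limite projective puis en tensorisant par $E$ sur $\Oe$ (via \cite[Lemme 4.1.3]{Em1}, l'isomorphisme \eqref{homprojlim} et la proposition \ref{prop:extprojlim}) on obtient l'énoncé $p$-adique : $\Ext^1_{G(\Qp)}(\IndQp \chi, \IndPQp \eta \circ \detfr) = 0$ lorsque $P \neq P_\alpha$, et pour $P = P_\alpha$ une injection
\[
\Ext^1_{G(\Qp)}\bigl(\IndQp \chi, \IndPa \eta \circ \detfr\bigr) \hookrightarrow \Hom_{T(\Qp)}\bigl(\chi, (\eta \circ \detfr) \cdot (\epsa)\bigr),
\]
ce dernier $E$-espace vectoriel étant nul si $\chi \neq (\eta\circ\detfr)\cdot(\epsa)$ et de dimension $1$ sinon. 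Ceci établit déjà les énoncés d'annulation ainsi que la majoration $\dim_E \Ext^1 \leq 1$ dans le cas restant ; le même calcul sur $\ke$ fournirait l'analogue modulo $p$.

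Il reste à établir la non-annulation lorsque $P = P_\alpha$ et $\chi = (\eta\circ\detfr)\cdot(\epsa)$, c'est-à-dire à exhiber une extension non scindée de $\IndPa \eta \circ \detfr$ par $\IndQp \chi$ ; c'est là le point que j'attends comme principale difficulté. En notant $L_\alpha \subset P_\alpha$ le sous-groupe de Levi standard, la transitivité de l'induction parabolique donne $\IndQp \chi = \operatorname{Ind}^{G(\Qp)}_{P_\alpha^-(\Qp)}\bigl(\operatorname{Ind}^{L_\alpha(\Qp)}_{(B^- \cap L_\alpha)(\Qp)} \chi\bigr)$ et $\IndPa \eta \circ \detfr = \operatorname{Ind}^{G(\Qp)}_{P_\alpha^-(\Qp)}(\eta \circ \detfr)$ ; comme l'induction parabolique depuis $P_\alpha^-$ est exacte et admet $\operatorname{Ord}_{P_\alpha}$ comme quasi-inverse à gauche, elle induit une injection
\[
\Ext^1_{L_\alpha(\Qp)}\bigl(\operatorname{Ind}^{L_\alpha(\Qp)}_{(B^- \cap L_\alpha)(\Qp)} \chi, \eta \circ \detfr\bigr) \hookrightarrow \Ext^1_{G(\Qp)}\bigl(\IndQp \chi, \IndPa \eta \circ \detfr\bigr).
\]
Comme $L_\alpha$ est de rang semi-simple $1$, la construction d'une extension non scindée pour le membre de gauche se ramène au cas de $\GL_2(\Qp)$ : l'hypothèse $\chi = (\eta\circ\detfr)\cdot(\epsa)$ rend réductible la série principale de $\GL_2(\Qp)$ en jeu, et l'on construit l'extension voulue comme dans \cite[§ 3.3]{BH} (resp. \cite[§ 3.4]{BH} pour la variante modulo $p$), en s'appuyant sur l'unicité de l'extension non scindée du côté $\GL_2$ donnée par \cite[Proposition B.2]{BH} (resp. \cite[Proposition 4.3.15]{Em2}). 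Jointe à la majoration obtenue ci-dessus, cette construction donne $\dim_E \Ext^1 = 1$.
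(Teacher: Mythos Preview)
Your argument is correct and follows the paper's own proof essentially line by line: apply \eqref{GroSE} with the corollaries \ref{coro:eta0} and \ref{coro:eta1} over $A=\A{k}$, pass to the limit as in the proof of Theorem \ref{theo:ext1}, and exhibit the non-split extension by parabolic induction from the rank-one Levi, reducing to the known $\GL_2(\Qp)$ case. Two small slips to fix: you wrote ``extension de $\IndPa \eta \circ \detfr$ par $\IndQp \chi$'' where the roles should be swapped (your displayed injection has them right), and for the mod $p$ input on the $\GL_2$ side the relevant reference is \cite[Proposition 4.3.13]{Em2} rather than 4.3.15, since here one needs $\Ext^1_{\GL_2(\Qp)}(\Indd (\eta\circ\detfr)\cdot(\oma),\eta\circ\detfr)$ (a principal series extended by a character), not an extension between two principal series.
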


\begin{proof}
Soient $\chi : T(\Qp) \to A^\times$ et $\eta : \Qp^\times \to A^\times$ des caractères lisses. En utilisant le corollaire \ref{coro:eta0}, la suite exacte \eqref{GroSE} avec $U = \chi$ et $V = \IndPQp \eta \circ \detfr$ donne une injection $A$-linéaire
\begin{multline*}
\Ext_{G(\Qp)}^1\left( \IndQp \chi,\IndPQp \eta \circ \detfr \right) \\
\hookrightarrow \Hom_{T(\Qp)} \left( \chi, \HOrdQp \left( \IndPQp \eta \circ \detfr \right) \right).
\end{multline*}
En utilisant le corollaire \ref{coro:eta1}, on en déduit que
\begin{equation} \label{SEextP1}
\Ext^1_{G(\Qp)} \left( \IndQp \chi , \IndPQp \eta \circ \detfr \right) = 0
\end{equation}
sauf si $P=P_\alpha$ avec $\alpha \in \Delta$, auquel cas on a une injection $A$-linéaire
\begin{multline} \label{SEextP2}
\Ext_{G(\Qp)}^1\left( \IndQp \chi,\IndPa \eta \circ \detfr \right) \\ \hookrightarrow \Hom_{T(\Qp)} \left( \chi,(\eta \circ \detfr) \cdot (\oma) \right).
\end{multline}

On suppose $A=\ke$ et on prouve la version modulo $p$ de la proposition. On déduit de \eqref{SEextP1} et \eqref{SEextP2} les cas d'annulation et on voit que si $\alpha \in \Delta$, alors
\begin{equation*}
\Ext^1_{G(\Qp)} \left( \IndQp (\eta \circ \detfr) \cdot (\oma) , \IndPa \eta \circ \detfr \right)
\end{equation*}
est de dimension au plus $1$. On montre qu'il est non nul en construisant comme dans \cite[§ 3.4]{BH} une telle extension non scindée par induction parabolique à partir de l'unique extension non scindée dans
\begin{equation*}
\Ext^1_{\GL_2(\Qp)} \left( \Indd (\eta \circ \detfr) \cdot (\oma) ,\eta \circ \detfr \right)
\end{equation*}
avec $B_2^-$ le sous-groupe des matrices triangulaires inférieures et $\alpha$ la racine de $\GL_2$ négative par rapport à $B_2^-$ (voir \cite[Proposition 4.3.13]{Em2}). Ainsi la dimension est exactement $1$, d'où la proposition modulo $p$.

On prouve maintenant la proposition. Soient $\chi : T(\Qp) \to \Oe^\times \subset E^\times$ et $\eta : \Qp^\times \to \Oe^\times \subset E^\times$ des caractères continus unitaires. Pour $k \geq 1$ entier, \eqref{SEextP1} et \eqref{SEextP2} avec $A=\A{k}$ et les réductions modulo $\pe^k$ des caractères $\chi$ et $\eta$ forment des systèmes projectifs. On passe à la limite projective puis on tensorise par $E$ sur $\Oe$ comme dans la preuve du théorème \ref{theo:ext1}. On montre que si $\alpha \in \Delta$, alors
\begin{equation*}
\Ext^1_{G(\Qp)} \left( \IndQp (\eta \circ \detfr) \cdot (\epsa) , \IndPa \eta \circ \detfr \right) \neq 0
\end{equation*}
en construisant comme dans \cite[§ 3.3]{BH} une telle une extension non scindée par induction parabolique à partir de l'unique extension non scindée dans
\begin{equation*}
\Ext^1_{\GL_2(\Qp)} \left( \Indd (\eta \circ \detfr) \cdot (\epsa) ,\eta \circ \detfr \right)
\end{equation*}
(on procède comme dans la preuve de \cite[Proposition B.2]{BH} pour montrer que ce $E$-espace vectoriel est de dimension $1$). Le reste de la démonstration est identique à la version modulo $p$.
\end{proof}

\subsubsection*{Résultats dans le cas $F \neq \Qp$}

On suppose $F \neq \Qp$ et on ne fait aucune hypothèse sur $G$.

\begin{theo} \label{theo:ext1F}
Soit $\chi : T(F) \to \Oe^\times \subset E^\times$ un caractère continu unitaire.
\begin{enumerate}[(i)]
\item Si $\chi' : T(F) \to \Oe^\times \subset E^\times$ est un autre caractère continu unitaire, alors
\begin{equation*}
\Ext_{G(F)}^1 \left( \Ind \chi', \Ind \chi \right) \neq 0
\end{equation*}
si et seulement si $\chi'=\chi$.
\item Le foncteur $\Ind$ induit un isomorphisme $E$-linéaire
\begin{equation*}
\Ext_{T(F)}^1 \left( \chi,\chi \right) \iso \Ext_{G(F)}^1 \left( \Ind \chi, \Ind \chi \right).
\end{equation*}
\end{enumerate}
\end{theo}

\begin{proof}
Soient $\chi,\chi' : T(F) \to A^\times$ des caractères lisses. En utilisant le corollaire \ref{coro:H1Ord}, la suite exacte \eqref{GroSE} avec $U = \chi'$ et $V = \Ind \chi$ donne un isomorphisme $A$-linéaire
\begin{equation*}
\Ext_{T(F)}^1 \left( \chi',\chi \right) \iso \Ext_{G(F)}^1\left( \Ind \chi',\Ind \chi \right).
\end{equation*}
Avec $A=\ke$ et en utilisant la proposition \ref{prop:extTke} on obtient la version modulo $p$ du théorème. Par un passage à la limite projective et un produit tensoriel comme dans la preuve du théorème \ref{theo:ext1} et en utilisant la proposition \ref{prop:extTE} on obtient le théorème.
\end{proof}

Soient $P \subset G$ un sous-groupe parabolique standard et $\detfr$ un caractère algébrique de $G$. On suppose $P \neq B$.

\begin{prop}
Soient $\chi : T(F) \to \Oe^\times \subset E^\times$ et $\eta : F^\times \to \Oe^\times \subset E^\times$ des caractères continus unitaires. Alors
\begin{equation*}
\Ext^1_{G(F)} \left( \Ind \chi , \IndP \eta \circ \detfr \right) = 0.
\end{equation*}
\end{prop}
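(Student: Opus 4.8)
Le plan est de reprendre la stratégie de la proposition précédente (le cas $F = \Qp$), à ceci près que, $F$ étant désormais différent de $\Qp$, le corollaire \ref{coro:eta1} fournit lui aussi une annulation : il n'y aura donc aucune extension à construire et le $\Ext^1$ sera directement nul.

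Je commencerais par le cas de caractères lisses $\chi : T(F) \to A^\times$ et $\eta : F^\times \to A^\times$. Comme $\chi$ est un caractère et $\IndP \eta \circ \detfr$ l'induite parabolique d'une représentation admissible, ces deux représentations sont lisses localement admissibles et l'on peut appliquer la suite exacte \eqref{GroSE} avec $U = \chi$ et $V = \IndP \eta \circ \detfr$ :
\begin{multline*}
0 \to \Ext_{T(F)}^1 \left( \chi, \Ord \left( \IndP \eta \circ \detfr \right) \right) \to \Ext_{G(F)}^1 \left( \Ind \chi, \IndP \eta \circ \detfr \right) \\
\to \Hom_{T(F)} \left( \chi, \HOrd[1] \left( \IndP \eta \circ \detfr \right) \right).
\end{multline*}
Le corollaire \ref{coro:eta0} (valable car $P \neq B$) donne $\Ord(\IndP \eta \circ \detfr) = 0$, donc le terme de gauche est nul ; le corollaire \ref{coro:eta1} (valable car $F \neq \Qp$) donne $\HOrd[1](\IndP \eta \circ \detfr) = 0$, donc le terme de droite est nul. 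On en conclut que $\Ext_{G(F)}^1(\Ind \chi, \IndP \eta \circ \detfr) = 0$.

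Il reste à traiter le cas de caractères continus unitaires $\chi : T(F) \to \Oe^\times \subset E^\times$ et $\eta : F^\times \to \Oe^\times \subset E^\times$. J'appliquerais ce qui précède avec $A = \A{k}$ et les réductions modulo $\pe^k$ de $\chi$ et $\eta$ pour tout entier $k \geq 1$ ; ces annulations forment un système projectif. En passant à la limite projective puis en tensorisant par $E$ sur $\Oe$ exactement comme dans la preuve du théorème \ref{theo:ext1} (via \cite[Lemme 4.1.3]{Em1}, l'isomorphisme \eqref{homprojlim} et la proposition \ref{prop:extprojlim}), on obtient la nullité annoncée. Il n'y a pas ici d'obstacle sérieux : le seul point à vérifier est l'applicabilité de la suite exacte \eqref{GroSE}, c'est-à-dire la locale admissibilité de $\chi$ et de $\IndP \eta \circ \detfr$, qui est immédiate.
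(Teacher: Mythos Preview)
Ta démonstration est correcte et suit exactement la même approche que celle du papier : utiliser la suite exacte \eqref{GroSE} avec $U=\chi$ et $V=\IndP \eta\circ\detfr$, annuler ses termes extrêmes via les corollaires \ref{coro:eta0} et \ref{coro:eta1}, puis passer à la limite projective et tensoriser par $E$ comme dans la preuve du théorème \ref{theo:ext1}. La seule différence est cosmétique : puisque le $\Ext^1$ est déjà nul sur chaque $\A{k}$, seule la proposition \ref{prop:extprojlim} est réellement nécessaire pour conclure (les références à \cite[Lemme 4.1.3]{Em1} et \eqref{homprojlim} sont ici superflues).
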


\begin{proof}
Soient $\chi : T(F) \to A^\times$ et $\eta : F^\times \to A^\times$ des caractères lisses. En utilisant les corollaires \ref{coro:eta0} et \ref{coro:eta1}, on déduit de la suite exacte \eqref{GroSE} avec $U = \chi$ et $V = \IndP \eta \circ \detfr$ que
\begin{equation*}
\Ext_{G(F)}^1\left( \Ind \chi,\IndP \eta \circ \detfr \right) = 0.
\end{equation*}
Avec $A=\ke$ on obtient la version modulo $p$ de la proposition. Par un passage à la limite projective et un produit tensoriel comme dans la preuve du théorème \ref{theo:ext1} on obtient la proposition.
\end{proof}

\subsection{\texorpdfstring{Résultat conjectural sur les $\Ext^\bullet$}{Résultat conjectural sur les Ext}}

Dans cette sous-section on suppose que la conjecture \ref{conj:ROrd} est vraie (on rappelle que cette dernière est démontrée pour $\GL_2$ dans \cite{EmP}). Par soucis de clarté, on fait également l'hypothèse que $G$ admet un \og twisting element \fg{} noté $\theta$.
On calcule les $\Ext^\bullet$ entre certaines induites (admissibles) de $G(F)$ dans la catégorie des représentations lisses \emph{localement} admissibles de $G(F)$ sur $\ke$.

\begin{theo} \label{theo:extn}
Soient $\chi : T(F) \to \ke^\times$ un caractère lisse et $n \in \Nbb$.
\begin{enumerate}[(i)]
\item Soit $\chi' : T(F) \to \ke^\times$ un autre caractère lisse. Si
\begin{equation*}
\Ext_{G(F)}^n \left( \Ind \chi' \omth, \Ind \chi \omth \right) \neq 0,
\end{equation*}
alors $\chi'=w(\chi)$ avec $w \in W$ et $[F:\Qp] \cdot \ell(w) \leq n$.
\item Soit $w \in W$ tel que $[F:\Qp] \cdot \ell(w) \leq n$. Si $\chi$ est fortement générique, alors on a un isomorphisme $\ke$-linéaire
\begin{multline*}
\Ext_{G(F)}^n \left( \Ind w(\chi) \omth, \Ind \chi \omth \right) \\
\cong \Ext_{T(F)}^{n-[F:\Qp] \cdot \ell(w)} \left( \un,\un \right)
\end{multline*}
avec $\un$ la représentation triviale de $T(F)$ sur $\ke$.
\end{enumerate}
\end{theo}

\begin{rema}
Sous les conditions du point (ii), on déduit du théorème que
\begin{equation*}
\dim_{\ke} \Ext_{G(F)}^{[F:\Qp] \cdot \ell(w)} \left( \Ind w(\chi) \omth, \Ind \chi \omth \right) = 1.
\end{equation*}
Lorsque $F=\Qp$, la non annulation de cet espace était attendue par Emerton (voir \cite[Remarque 3.5.2]{BH}).
\end{rema}

\begin{proof}
On note $(\mathcal{E}^{i,j}_r)$ la suite spectrale \eqref{Gro} avec $A=\ke$, $U=\chi' \omth$ et $V=\Ind \chi \omth$. En utilisant la conjecture \ref{conj:ROrd} et le corollaire \ref{coro:HnOrd}, on voit que
\begin{multline*}
\mathcal{E}^{i,j}_2 = \bigoplus_{[F:\Qp] \cdot \ell(w')=j} \Ext_{T(F)}^i \left( \chi' \omth,w'(\chi) \omth \right) \\
\Rightarrow \Ext_{G(F)}^{i+j} \left( \Ind \chi' \omth,\Ind \chi \omth \right).
\end{multline*}

On suppose que le $\ke$-espace vectoriel du point (i) est non nul. Alors il existe $i,j \in \Nbb$ tels que $i+j=n$ et $\mathcal{E}^{i,j}_2 \neq 0$, donc il existe $w' \in W$ tel que $[F:\Qp] \cdot \ell(w')=j$ et
\begin{equation*}
\Ext_{T(F)}^i \left( \chi' \omth,w'(\chi) \omth \right) \neq 0.
\end{equation*}
En utilisant la proposition \ref{prop:extTke}, on en déduit que $\chi'=w'(\chi)$. Comme de plus $[F:\Qp] \cdot \ell(w')=j \leq n$, on obtient le point (i).

On suppose $\chi$ fortement générique. Si $\chi' = w(\chi)$ avec $w \in W$, alors pour tout $w' \in W$ on a $\chi' = w'(\chi)$ si et seulement si $w'=w$. En utilisant la proposition \ref{prop:extTke} on voit que la suite spectrale $(\mathcal{E}^{i,j}_r)$ dégénère : pour tous $i,j \in \Nbb$, on a $\mathcal{E}_2^{i,j}=0$ si $j \neq [F:\Qp] \cdot \ell(w)$) et on a
\begin{equation*}
\mathcal{E}_2^{i,[F:\Qp] \cdot \ell(w)} = \Ext_{T(F)}^i \left( w(\chi) \omth,w(\chi) \omth \right).
\end{equation*}
En comparant ceci à la limite de $(\mathcal{E}^{i,j}_r)$ on voit que si $n< [F:\Qp] \cdot \ell(w)$, alors
\begin{equation*}
\Ext_{G(F)}^n \left( \Ind w(\chi) \omth, \Ind \chi \omth \right)=0
\end{equation*}
tandis que si $n \geq [F:\Qp] \cdot \ell(w)$, alors on a un isomorphisme $\ke$-linéaire
\begin{multline*}
\Ext_{G(F)}^n \left( \Ind w(\chi) \omth, \Ind \chi \omth \right) \\
\cong \Ext_{T(F)}^{n-[F:\Qp] \cdot \ell(w)} \left( w(\chi) \omth,w(\chi) \omth \right).
\end{multline*}
Après torsion par l'inverse du caractère $w(\chi) \omth$, on obtient le point (ii).
\end{proof}

\begin{rema}
Soient $\chi : T(F) \to \ke^\times$ un caractère lisse fortement générique et $w \in W$. Si $w=w_1w_2$ avec $w_1,w_2 \in W$ et $\ell(w)=\ell(w_1)+\ell(w_2)$, alors on s'attend à ce que l'unique extension non nulle dans
\begin{equation*}
\Ext_{G(F)}^{[F:\Qp] \cdot \ell(w)} \left( \Ind w(\chi) \omth, \Ind \chi \omth \right)
\end{equation*}
soit le produit de Yoneda de celles dans
\begin{equation*}
\Ext_{G(F)}^{[F:\Qp] \cdot \ell(w_2)} \left( \Ind w_2(\chi) \omth, \Ind \chi \omth \right)
\end{equation*}
et
\begin{equation*}
\Ext_{G(F)}^{[F:\Qp] \cdot \ell(w_1)} \left( \Ind w_1 w_2(\chi) \omth, \Ind w_2(\chi) \omth \right).
\end{equation*}
\end{rema}

\appendix
\numberwithin{equation}{section}

\section{Algèbre de Lie et sous-groupes standards} \label{app:Lie}

Nous donnons quelques résultats sur l'algèbre de Lie de $N$ en lien avec certains sous-groupes ouverts compacts de $N(F)$.

\subsection*{Algèbre de Lie et exponentielle}

Soit $\Lie(N)$ l'algèbre de Lie de $N$ sur $F$ (c'est-à-dire le noyau de la surjection canonique $N(F[\epsilon]) \twoheadrightarrow N(F)$ avec $\epsilon^2=0$). C'est une $F$-algèbre de Lie nilpotente, d'indice de nilpotence inférieur ou égal à $d$. On déduit de \cite[Chapitre IV, § 2, Proposition 4.1]{DG} que l'exponentielle induit un homéomorphisme canonique
\begin{equation*}
\exp : \Lie(N) \iso N(F).
\end{equation*}
On note $\log : N(F) \iso \Lie(N)$ l'inverse de $\exp$. Si $\N$ est un sous-groupe fermé de $N$, alors l'exponentielle identifie $\N(F)$ avec $\Lie(\N) \subset \Lie(N)$. Si de plus $\N$ est distingué dans $N$, alors $\Lie(\N)$ est un idéal de $\Lie(N)$ et l'exponentielle identifie $N(F)/\N(F)$ avec $\Lie(N)/\Lie(\N)$.

Soit $Z(X,Y)$ la série de Campbell-Hausdorff. C'est une série formelle de Lie à coefficients rationnels dont on note $Z^k(X,Y)$ la composante homogène de degré $k$ pour tout $k \in \Nbb$. On a $Z^0(X,Y)=0$ et $Z^1(X,Y)=X+Y$. À travers l'exponentielle, la loi de groupe sur $N(F)$ est donnée par la formule de Campbell-Hausdorff :
\begin{equation} \label{CH}
(\exp X)(\exp Y) = \exp (Z(X,Y))
\end{equation}
pour tous $X,Y \in \Lie(N)$ (voir \cite[Chapitre IV, § 2, Proposition 4.3]{DG}). En particulier, l'exponentielle est un isomorphisme de groupes lorsque $N$ est commutatif.
Cette formule est bien définie car les composantes homogènes de $Z(X,Y)$ de degré strictement supérieur à l'indice de nilpotence de $\Lie(N)$ sont nulles sur $\Lie(N)$ (on peut utiliser la série de Campbell-Hausdorff tronquée aux termes de degré inférieur ou égal à $d$ dans la formule \eqref{CH}).

\begin{defiapp}
Soient $\N \subset N$ un sous-groupe fermé et $\Nz$ un sous-groupe ouvert compact de $\N(F)$. On dit que $\Nz$ est \emph{standard} si $\log(\Nz) \subset \Lie(\N)$ est une sous-$\Zp$-algèbre de Lie. Dans ce cas on note $\Lie(\Nz)$ cette dernière.
\end{defiapp}

\subsection*{Représentation adjointe et décomposition radicielle}

On rappelle la décomposition radicielle de $\Lie(N)$ relative à $T$. Pour tout $\alpha \in \Phi^+$, on note $N_\alpha$ le sous-groupe radiciel de $N$ correspondant. On a une décomposition $F$-linéaire
\begin{equation} \label{rad}
\Lie(N) = \bigoplus_{\alpha \in \Phi^+} \Lie(N_\alpha)
\end{equation}
et $\Lie(N_\alpha)$ est le sous-espace propre (de dimension $1$ sur $F$) de poids $\alpha$ de la représentation adjointe de $T(F)$ sur $\Lie(N)$. Par définition l'exponentielle est un entrelacement entre l'action adjointe de $T(F)$ sur $\Lie(N)$ et l'action par conjugaison de $T(F)$ sur $N(F)$.

Soit $\N \subset N$ un sous-groupe fermé stable sous l'action par conjugaison de $T$. On a une décomposition $F$-linéaire
\begin{equation*}
\Lie(\N) = \bigoplus_{\alpha \in \widetilde{\Phi}^+} \Lie(N_\alpha)
\end{equation*}
avec $\widetilde{\Phi}^+ \subset \Phi^+$. On déduit de \cite[Partie II, § 1.7]{Jan} que le produit induit un isomorphisme (quel que soit l'ordre sur $\widetilde{\Phi}^+$)
\begin{equation} \label{prodNa}
\prod_{\alpha \in \widetilde{\Phi}^+} N_\alpha \iso \N
\end{equation}

\begin{defiapp}
Soit $N_0$ un sous-groupe standard de $N(F)$. On dit que $N_0$ est \emph{compatible avec la décomposition radicielle} si on a la décomposition $\Zp$-linéaire
\begin{equation*}
\Lie(N_0) = \bigoplus_{\alpha \in \Phi^+} (\Lie(N_0) \cap \Lie(N_\alpha)).
\end{equation*}
\end{defiapp}

\subsection*{Sous-groupes standards de $N(F)$}

On étudie des sous-groupes ouverts compacts particuliers de $N(F)$.

\begin{lemmapp} \label{lemm:compat}
Pour tout $c \in \Nbb$, il existe une base de voisinages de $1 \in N(F)$ constituée de sous-groupes standards $N_0$ compatibles avec la décomposition radicielle et tels que pour tous $n_1,n_2 \in N_0$ on ait dans $\Lie(N_0)$ la relation
\begin{equation*}
\log(n_1n_2) \equiv \log n_1 + \log n_2 \pmod{p^c}.
\end{equation*}
\end{lemmapp}

\begin{proof}
On pousse un peu plus loin la preuve de \cite[Lemme 3.5.2]{Em2}.
Soit $M_0$ un $\Zp$-réseau borné de $\Lie(N)$ (c'est-à-dire un sous-$\Zp$-module de type fini qui engendre $\Lie(N)$ sur $\Qp$) vérifiant
\begin{equation*}
M_0 = \bigoplus_{\alpha \in \Phi^+} (M_0 \cap \Lie(N_\alpha)).
\end{equation*}
La famille $(p^r M_0)_{r \in \Z}$ est une base de voisinages de $0$ dans $\Lie(N)$ constituée de $\Zp$-réseaux bornés vérifiant aussi cette décomposition radicielle. Soit $r \in \Z$. Alors $\exp(p^r M_0)$ est un ouvert compact de $N(F)$. Comme $[M_0,M_0]$ est borné et $M_0$ est un réseau, il existe $a \in \Z$ tel que $[M_0,M_0] \subset p^{-a} M_0$. On a donc
\begin{equation*}
[p^rM_0,p^rM_0] \subset p^{r-a} (p^r M_0)
\end{equation*}
et on en déduit que $p^r M_0$ est une sous-$\Zp$-algèbre de Lie lorsque $r \geq a$.
On note $-b$ le minimum de la valuation $p$-adique sur les coefficients des $Z^k(X,Y)$ pour $k \in \llbrack 2,d \rrbrack$. Pour tout entier $k>1$, on a donc
\begin{equation*}
Z^k(p^r M_0,p^r M_0) \subset p^{r-a-b} (p^r M_0)
\end{equation*}
et on en déduit que $\exp(p^r M_0)$ est un sous-groupe de $N(F)$ lorsque $r \geq a+b$. De plus,
\begin{equation*}
Z(X,Y) \equiv X+Y \pmod{p^{r-a-b} (p^r M_0)}
\end{equation*}
pour tous $X,Y \in p^r M_0$. Ainsi la famille $(\exp(p^r M_0))_{r\geq a+b+c}$ vérifie les conditions de l'énoncé.
\end{proof}

\begin{propapp} \label{prop:rad}
Soit $N_0$ un sous-groupe standard de $N(F)$ compatible avec la décomposition radicielle. Si $\N_1,\N_2 \subset N$ sont deux sous-groupes fermés stables sous l'action par conjugaison de $T$, alors
\begin{equation*}
\left(\N_1(F) \cap N_0\right) \left(\N_2(F) \cap N_0\right) = \left(\N_1(F) \N_2(F)\right) \cap N_0.
\end{equation*}
\end{propapp}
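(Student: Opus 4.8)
On prouve l'inclusion non triviale $\supseteq$ (l'inclusion $(\N_1(F)\cap N_0)(\N_2(F)\cap N_0)\subseteq(\N_1(F)\N_2(F))\cap N_0$ étant évidente) par récurrence sur $\dim N$ ; l'énoncé et sa preuve valant en fait pour tout $F$-groupe unipotent connexe déployé muni d'une action de $T$ avec décomposition radicielle, ce qu'on utilisera pour le quotient $N/N_\gamma$. Pour $\alpha\in\Phi^+$ posons $M_\alpha\dfn\Lie(N_0)\cap\Lie(N_\alpha)$, de sorte que $\Lie(N_0)=\bigoplus_{\alpha\in\Phi^+}M_\alpha$ par compatibilité, et notons $\widetilde{\Phi}_i^+\subset\Phi^+$ tel que $\Lie(\N_i)=\bigoplus_{\alpha\in\widetilde{\Phi}_i^+}\Lie(N_\alpha)$. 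Comme $\log$ identifie $\N_i(F)$ avec $\Lie(\N_i)$ et $N_0$ avec $\Lie(N_0)$, on a les descriptions ``graduées'' $\N_i(F)\cap N_0=\exp(\bigoplus_{\alpha\in\widetilde{\Phi}_i^+}M_\alpha)$, $N_\gamma(F)\cap N_0=\exp(M_\gamma)$ pour toute racine $\gamma$, et $\N_1(F)\cap\N_2(F)$ est le sous-groupe fermé $T$-stable de $N$ de racines $\widetilde{\Phi}_1^+\cap\widetilde{\Phi}_2^+$ ; on s'en servira librement.

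On fixe $\gamma\in\Phi^+$ de hauteur maximale, de sorte que $N_\gamma$ soit distingué (donc central) dans $N$, et on pose $\overline{N}\dfn N/N_\gamma$ : c'est un groupe unipotent de dimension $\dim N-1$ muni d'une action de $T$. Les images $\overline{N}_0,\overline{\N}_1,\overline{\N}_2$ de $N_0,\N_1,\N_2$ dans $\overline{N}$ sont respectivement standard compatible avec la décomposition radicielle et fermées $T$-stables, les $\overline{\N}_i$ étant de racines $\widetilde{\Phi}_i^+\setminus\{\gamma\}$ ; l'hypothèse de récurrence s'y applique. Soit $n=n_1n_2\in N_0$ avec $n_i\in\N_i(F)$. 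Son image $\overline{n}$ appartient à $(\overline{\N}_1(F)\,\overline{\N}_2(F))\cap\overline{N}_0=(\overline{\N}_1(F)\cap\overline{N}_0)(\overline{\N}_2(F)\cap\overline{N}_0)$, donc $\overline{n}=\overline{m}_1\overline{m}_2$ avec $\overline{m}_i\in\overline{\N}_i(F)\cap\overline{N}_0$. La description graduée montre que l'application $\N_i(F)\cap N_0\to\overline{\N}_i(F)\cap\overline{N}_0$ est surjective, on relève donc $\overline{m}_i$ en $m_i\in\N_i(F)\cap N_0$ ; alors $m_1m_2\in N_0$ et $\overline{m_1m_2}=\overline{n}$, d'où $m_1m_2=nz$ avec $z\in N_\gamma(F)\cap N_0=\exp(M_\gamma)$.

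Il reste à absorber $z$. Si $\gamma\in\widetilde{\Phi}_1^+$, alors $\exp(M_\gamma)\subset\N_1(F)\cap N_0$ et, $z$ étant central, $n=(m_1z^{-1})m_2$ avec $m_1z^{-1}\in\N_1(F)\cap N_0$ ; le cas $\gamma\in\widetilde{\Phi}_2^+$ est symétrique. Le cas restant, $\gamma\notin\widetilde{\Phi}_1^+\cup\widetilde{\Phi}_2^+$, est le point délicat — le principal obstacle attendu : on y montre que $z=1$. Comme $m_1m_2=n_1n_2z$ et $z$ est central, on a $z=(n_1^{-1}m_1)(m_2n_2^{-1})\in\N_1(F)\N_2(F)$, et il suffit d'établir que $N_\gamma(F)\cap(\N_1(F)\N_2(F))=\{1\}$. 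Or si $\zeta=n_1'n_2'\in N_\gamma(F)$ avec $n_i'\in\N_i(F)$, alors $\overline{n_1'}\,\overline{n_2'}=1$ dans $\overline{N}$, d'où $\overline{n_2'}=\overline{n_1'}^{-1}\in\overline{\N}_1(F)\cap\overline{\N}_2(F)$, ce dernier étant de racines $\widetilde{\Phi}_1^+\cap\widetilde{\Phi}_2^+$ ; en remontant à $N(F)$ et en utilisant $\N_i(F)\cap N_\gamma(F)=\{1\}$ (car $\gamma\notin\widetilde{\Phi}_i^+$), on obtient $n_i'\in\N_1(F)\cap\N_2(F)$, puis $\zeta\in(\N_1(F)\cap\N_2(F))\cap N_\gamma(F)=\{1\}$, cette intersection étant de racines $(\widetilde{\Phi}_1^+\cap\widetilde{\Phi}_2^+)\cap\{\gamma\}=\emptyset$. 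Le cas de base $\dim N=0$ étant immédiat, ceci achève la récurrence. Conceptuellement, ce dernier cas revient à contrôler la composante en $\gamma$ engendrée par les crochets de Campbell--Hausdorff, ce que permet précisément le calcul d'intersections de sous-groupes $T$-stables via l'exponentielle.
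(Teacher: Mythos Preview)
Your proof is correct, but it follows a genuinely different route from the paper's. The paper first reduces to the case $\N_1\cap\N_2=1$ via the product isomorphism \eqref{prodNa}, then works entirely in $\Lie(N)$: writing $n=n_1n_2\in N_0$, it applies Campbell--Hausdorff to $\log n=\log n_1+\log n_2+\sum_{k>1}Z^k(\log n_1,\log n_2)$ and shows by induction on the height $|\alpha|$ of a root that each $\pi_\alpha(\log n_i)$ lies in $\Lie(N_0)$, using that the higher Campbell--Hausdorff terms of weight $\alpha$ involve only components of strictly smaller height. Your argument instead inducts on $\dim N$ by quotienting out a central root subgroup $N_\gamma$ of maximal height, lifts the factorisation obtained by induction, and absorbs the central defect $z$ by a case analysis on whether $\gamma$ lies in $\widetilde{\Phi}_1^+$, $\widetilde{\Phi}_2^+$, or neither. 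The paper's approach is more compact once Campbell--Hausdorff is in hand and avoids the case split; yours is more group-theoretic, avoids the preliminary reduction to $\N_1\cap\N_2=1$, and makes the inductive structure on the unipotent group very transparent. The delicate third case in your argument (showing $N_\gamma(F)\cap\N_1(F)\N_2(F)=\{1\}$ when $\gamma\notin\widetilde{\Phi}_1^+\cup\widetilde{\Phi}_2^+$) is exactly where the paper's reduction to trivial intersection pays off, since under $\N_1\cap\N_2=1$ the analogous obstruction never arises.
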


\begin{proof}
On prouve l'inclusion non triviale. En utilisant l'isomorphisme \eqref{prodNa}, on voit que l'on peut supposer $\N_1 \cap \N_2 = 1$. Soit $n =n_1n_2 \in (\N_1(F) \N_2(F)) \cap N_0$ avec $n_1 \in \N_1(F)$ et $n_2 \in \N_2(F)$. On va montrer que $\log(n_1),\log(n_2) \in \Lie(N_0)$. Puisque $N_0$ est compatible avec la décomposition radicielle, il suffit de montrer que pour tout $\alpha \in \Phi^+$ on a
\begin{equation*}
\pi_\alpha(\log n_1),\pi_\alpha(\log n_2) \in \Lie(N_0)
\end{equation*}
avec $\pi_\alpha : \Lie(N) \twoheadrightarrow \Lie(N_\alpha)$ la projection relative à la décomposition radicielle \eqref{rad}.
Soit $\alpha \in \Phi^+$.
La formule de Campbell-Hausdorff \eqref{CH} nous donne
\begin{equation*}
\log n = \log n_1 + \log n_2 + \sum_{k>1} Z^k(\log n_1,\log n_2).
\end{equation*}
En appliquant $\pi_\alpha$ à cette égalité et en utilisant le fait que $\pi_\alpha(\log n_1)=0$ ou $\pi_\alpha(\log n_2)=0$ (car $\N_1 \cap \N_2=1$), on voit qu'il suffit de montrer que pour tout entier $k>1$ on a
\begin{equation*}
\pi_\alpha \left( Z^k(\log n_1,\log n_2) \right) \in \Lie(N_0).
\end{equation*}
On note $|\alpha| = \sum_{\beta \in \Delta} c_\beta$ avec $\alpha = \sum_{\beta \in \Delta} c_\beta \beta$ la décomposition de $\alpha$ dans la base $\Delta$. Étant donné que
\begin{equation*}
[\Lie(N_{\alpha_1}),\Lie(N_{\alpha_2})] \subset \Lie(N_{\alpha_1+\alpha_2})
\end{equation*}
pour tous $\alpha_1,\alpha_2 \in \Phi^+$, on voit que pour tout entier $k>1$ on a
\begin{equation*}
\pi_\alpha  \left( Z^k(\log n_1,\log n_2) \right) = \pi_\alpha \left( Z^k \left( \sum_{|\alpha_1| < |\alpha|} \pi_{\alpha_1}(\log n_1),\sum_{|\alpha_2| < |\alpha|} \pi_{\alpha_2}(\log n_2) \right) \right).
\end{equation*}
On en déduit le résultat par récurrence sur $|\alpha|$.
\end{proof}

\section{\texorpdfstring{Représentations modulo $p$ et $p$-adiques}{Représentations modulo p et p-adiques}} \label{app:rep}

Soit $H$ un groupe de Lie $p$-adique. Nous explicitons certains liens entre les représentations continues unitaires admissibles de $H$ sur $E$ et les représentations lisses admissibles de $H$ sur $\A{k}$ avec $k \geq 1$ entier.

\subsection*{Représentations continues}

On rappelle quelques définitions et résultats concernant les représentations continues de $H$ sur $E$ (voir \cite[§ 2]{Sch}).
Une telle représentation est un $E$-espace de Banach $V$ muni d'une action $E$-linéaire de $H$ continue au sens où l'application correspondante $H \times V \to V$ est continue. On dit que $V$ est \emph{admissible} si son dual continu $V'=\Homcont_E(V,E)$ est de type fini sur l'algèbre d'Iwasawa $E \otimes_{\Oe} \Oe[[H_0]]$ de $H_0$ pour un (ou de façon équivalente tout) sous-groupe ouvert compact $H_0$ de $H$. On dit que $V$ est \emph{unitaire} s'il existe une boule $V_0 \subset V$ (c'est-à-dire un réseau ouvert borné) stable sous l'action de $H$. Les représentations continues unitaires admissibles de $H$ sur $E$ et les applications continues $E$-linéaires $H$-équivariantes forment une catégorie abélienne (en fait l'unitarité n'est pas nécessaire).

Une représentation $\pe$-adiquement continue de $H$ sur $\Oe$ est un $\Oe$-module $\pe$-adiquement séparé et complet $V_0$ dont la torsion est d'exposant borné (c'est-à-dire que le sous-$\Oe$-module de torsion est annulé par une puissance de $\pe$) muni d'une action $\Oe$-linéaire de $H$ continue pour la topologie $\pe$-adique. Dans ce cas, $V_0/\pe^k V_0$ est une représentation lisse de $H$ sur $\A{k}$ pour tout entier $k\geq1$ et $E \otimes_{\Oe} V_0$ est une représentation continue unitaire de $H$ sur $E$. On dit que $V_0$ est admissible si $V_0/\pe V_0$ est admissible, auquel cas $V_0/\pe^k V_0$ est admissible pour tout entier $k\geq1$ et $E \otimes_{\Oe} V_0$ est admissible. Réciproquement, si $V$ est une représentation continue unitaire de $H$ sur $E$, alors toute boule $V_0 \subset V$ stable par $H$ est une représentation $\pe$-adiquement continue de $H$ sur $\Oe$ et $V$ est admissible si et seulement si $V_0$ est admissible. Les représentations $\pe$-adiquement continues admissibles de $H$ sur $\Oe$ et les applications $\Oe$-linéaires $H$-équivariantes forment une catégorie abélienne (voir \cite[Proposition 2.4.11]{Em1}).

\subsection*{Extensions modulo $p$ et $p$-adiques}

Soient $U$ et $V$ des représentations continues unitaires admissibles de $H$ sur $E$. On note $U_0$ (resp. $V_0$) une boule de $U$ (resp. $V$) stable par $H$ et $U_k$ (resp. $V_k$) sa réduction modulo $\pe^k$ pour tout entier $k \geq 1$.
Le $\Oe$-module $\Hom_H(U_0,V_0)$ s'identifie à un réseau de $\Hom_H(U,V)$ et on a un isomorphisme $\Oe$-linéaire
\begin{equation*}
\Hom_H(U_0,V_0) \cong \varprojlim \Hom_H(U_k,V_k),
\end{equation*}
d'où un isomorphisme $E$-linéaire
\begin{equation} \label{homprojlim}
\Hom_H(U,V) \cong E \otimes_{\Oe} \varprojlim \Hom_H(U_k,V_k).
\end{equation}
De plus pour tout entier $k \geq 1$, le noyau de l'application de réduction
\begin{equation*}
\Hom_H(U_0,V_0) \to \Hom_H(U_k,V_k)
\end{equation*}
est le sous-$\Oe$-module $\pe^k \Hom_H(U_0,V_0)$, d'où avec $k=1$ l'inégalité
\begin{equation*}
\dim_E \Hom_H(U,V) \leq \dim_{\ke} \Hom_H(U_1,V_1).
\end{equation*}
Nous montrons des propriétés analogues (déjà connues, voir l'appendice de \cite{BH}) pour le foncteur $\Ext_H^1$. On calcule $\Ext_H^1(U,V)$ (resp. $\Ext_H^1(U_0,V_0)$, $\Ext_H^1(U_k,V_k)$ avec $k \geq 1$ entier) dans la catégorie des représentations continues unitaires (resp. $\pe$-adiquement continues, lisses) admissibles de $H$ sur $E$ (resp. $\Oe$, $\A{k}$) en utilisant les extensions de Yoneda.

\begin{propapp} \label{prop:extprojlim}
Avec les notations précédentes, si $U$ est résiduellement de longueur finie\footnote{C'est-à-dire que $U_0/\pe U_0$ est de longueur finie (ceci ne dépend pas du choix de $U_0$).}, alors on a un isomorphisme $E$-linéaire
\begin{equation*}
\Ext_H^1(U,V) \cong E \otimes_{\Oe} \varprojlim \Ext_H^1(U_k,V_k)
\end{equation*}
et
\begin{equation*}
\dim_E \Ext_H^1(U,V) \leq \dim_{\ke} \Ext_H^1(U_1,V_1).
\end{equation*}
\end{propapp}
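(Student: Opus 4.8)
The plan is to move $\Ext^1_H$ between the three categories in play — continuous unitary admissible representations on $E$-Banach spaces, $\pe$-adically continuous admissible representations on $\Oe$-modules, and smooth admissible representations on $\A{k}$ — by passing through the integral models $U_0$, $V_0$ and then taking an inverse limit, following the argument sketched in the appendix of \cite{BH}. There are two comparison steps, after which the bound follows from the Yoneda six-term sequences for multiplication by $\pe$.

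\emph{First step (inverting $\pe$).} I would show $\Ext^1_H(U,V)\cong E\otimes_{\Oe}\Ext^1_H(U_0,V_0)$, the right-hand $\Ext^1$ being computed in the abelian category of $\pe$-adically continuous admissible $\Oe$-representations. The functor $E\otimes_{\Oe}-$ between the two categories is exact, sends $U_0\mapsto U$ and $V_0\mapsto V$, and is essentially surjective because an extension $\mathcal{E}$ of $U$ by $V$ is again unitary admissible, hence carries an $H$-stable unit ball $\mathcal{E}_0$; it therefore induces a natural map $E\otimes_{\Oe}\Ext^1_H(U_0,V_0)\to\Ext^1_H(U,V)$. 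For surjectivity, given $\mathcal{E}$ and such an $\mathcal{E}_0$, the lattices $\mathcal{E}_0\cap V$ and the image $\overline{\mathcal{E}_0}$ of $\mathcal{E}_0$ in $U$ are $H$-stable unit balls, commensurable with $V_0$ and $U_0$; pulling the short exact sequence $0\to\mathcal{E}_0\cap V\to\mathcal{E}_0\to\overline{\mathcal{E}_0}\to0$ back along an inclusion $\pe^mU_0\hookrightarrow\overline{\mathcal{E}_0}$ and pushing it out along $\mathcal{E}_0\cap V\hookrightarrow\pe^{-m}V_0$ produces an $\Oe$-model realizing $[\mathcal{E}]$ up to an invertible scalar. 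For injectivity, a continuous $H$-equivariant splitting of $E\otimes_{\Oe}\mathcal{E}_0$ has bounded image on the unit ball $U_0$, so a suitable power $\pe^m$ of it lands in $\mathcal{E}_0$ and splits the pullback of $\mathcal{E}_0$ along $\pe^m\colon U_0\to U_0$; hence $\pe^m[\mathcal{E}_0]=0$ in $\Ext^1_H(U_0,V_0)$. This step uses neither the finite-length hypothesis nor Noetherianity beyond $\Oe$ being a discrete valuation ring.

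\emph{Second step (dévissage and passage to the limit).} Since $U_0$ is $\Oe$-torsion-free, hence $\Oe$-flat, reduction modulo $\pe^k$ is exact on extensions of $U_0$ by $V_0$, and yields a compatible family of maps $\Ext^1_H(U_0,V_0)\to\Ext^1_H(U_k,V_k)$. This family is surjective onto the inverse limit: a compatible system of extensions $\mathcal{E}_k$ of $U_k$ by $V_k$ has inverse limit $\varprojlim_k\mathcal{E}_k$, which is $\pe$-adically complete, is torsion-free (an extension of torsion-free by torsion-free over a discrete valuation ring is torsion-free), is residually admissible since its reduction mod $\pe$ is $\mathcal{E}_1$, and sits in $0\to V_0\to\varprojlim_k\mathcal{E}_k\to U_0\to0$ because the tower $(V_k)_k$ has surjective transition maps so that $\varprojlim^1_kV_k=0$; it thus defines an $\Oe$-extension mapping to the given system. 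For the kernel one argues that an $\Oe$-extension reducing to the split extension at every finite level has, at each level, an $H$-equivariant splitting, the splittings at level $k$ forming a torsor under $\Hom_H(U_k,V_k)$; the obstruction to choosing them compatibly — equivalently, to splitting over $\Oe$ — lies in $\varprojlim^1_k\Hom_H(U_k,V_k)$. Here the hypothesis that $U$ is residually of finite length enters: it forces $U_1$ to have finite length, hence each $\Hom_H(U_k,V_k)$ to be of finite length over $\A{k}$ (from finite-dimensionality of $\Hom_H$ between a finite-length and an admissible smooth $\ke$-representation), whence the tower $(\Hom_H(U_k,V_k))_k$ is Mittag-Leffler and the $\varprojlim^1$ vanishes. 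Combining the two steps gives $\Ext^1_H(U,V)\cong E\otimes_{\Oe}\varprojlim_k\Ext^1_H(U_k,V_k)$.

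\emph{The bound, and the main obstacle.} For the inequality I would feed $0\to V_0\xrightarrow{\pe}V_0\to V_1\to0$ and $0\to U_0\xrightarrow{\pe}U_0\to U_1\to0$ into the Yoneda six-term exact sequences in the second and the first variable respectively; the isomorphism $\Hom_H(U_0,V_1)\cong\Hom_H(U_1,V_1)$ together with the finiteness coming from the finite-length hypothesis lets one control the discrepancy between $\Ext^1_H$ taken over $\Oe$-modules and over $\ke$-modules, producing an injection $\Ext^1_H(U_0,V_0)/\pe\hookrightarrow\Ext^1_H(U_1,V_1)$; then $\dim_E\Ext^1_H(U,V)=\rg_{\Oe}\Ext^1_H(U_0,V_0)\le\dim_{\ke}\bigl(\Ext^1_H(U_0,V_0)/\pe\bigr)\le\dim_{\ke}\Ext^1_H(U_1,V_1)$, the middle step being Nakayama. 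The delicate point throughout — and the one where the hypothesis genuinely bites — is the finiteness bookkeeping: showing that residual finite length propagates to finiteness of the relevant $\Hom$ and $\Ext^1$ modules, so that the $\varprojlim^1$-terms vanish and the extra terms in the six-term sequences are bounded; the remainder is routine homological algebra together with the elementary commensurability arguments for unit balls.
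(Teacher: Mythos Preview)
Your proposal is correct and follows essentially the same two-step architecture as the paper: first compare $\Ext^1_H(U,V)$ with $\Ext^1_H(U_0,V_0)$ up to torsion via commensurability of lattices, then identify $\Ext^1_H(U_0,V_0)$ with $\varprojlim_k\Ext^1_H(U_k,V_k)$ via reduction. The paper carries out each step by explicit construction (e.g.\ replacing $W_0$ by $\iota_0(V_0)+\pe^nW_0$, and building $\mathcal E_0'$ from $\ker\tilde\sigma_k$), whereas you phrase the same content more cohomologically. Your $\varprojlim^1$ argument for injectivity of $\Upsilon$ is equivalent to the paper's ``the set of sections at level $k$ is finite, so choose recursively''; both rest on the same finiteness of $\Hom_H(U_k,V_k)$ (the paper cites \cite[Lemme 2.3.10]{Em1}).

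One place where your sketch is genuinely looser than the paper: for the bound, the six-term sequence for $0\to V_0\xrightarrow{\pe}V_0\to V_1\to0$ gives $\Ext^1_H(U_0,V_0)/\pe\hookrightarrow\Ext^1_{\Oe\text{-cat}}(U_0,V_1)$, but the target you want is $\Ext^1_{\ke\text{-cat}}(U_1,V_1)$, and these live in different categories --- an $\Oe$-extension of $U_1$ by $V_1$ need not be killed by $\pe$ (think $\A{2}$). The paper sidesteps this by showing directly that $\ker\upsilon_k=\pe^k\Ext^1_H(U_0,V_0)$: given a section $W_k\to V_k$, its pullback $\tilde\sigma_k:W_0\to V_k$ has kernel sitting in $0\to V_0\xrightarrow{\pe^k\iota_0}\ker\tilde\sigma_k\xrightarrow{\pi_0}U_0\to 0$, exhibiting $[\mathcal E_0]$ as $\pe^k$ times this class. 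Your six-term route can be completed, but you would have to check that the composite $\Ext^1_H(U_0,V_0)/\pe\hookrightarrow\Ext^1_{\Oe}(U_0,V_1)\to\Ext^1_{\ke}(U_1,V_1)$ (the second map being $W\mapsto W/\pe W$) is injective, which amounts to the same explicit computation.
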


\begin{proof}
Le produit tensoriel par $E$ sur $\Oe$ est exact (en tant que localisation), donc il induit une application $\Oe$-linéaire
\begin{equation*}
\Lambda : \Ext_H^1(U_0,V_0) \to \Ext_H^1(U,V).
\end{equation*}

Montrons que $\ker \Lambda$ est le sous-$\Oe$-module de torsion de $\Ext^1_H(U_0,V_0)$. Comme $\Ext^1_H(U,V)$ est sans torsion, il suffit de montrer que toute classe dans $\ker \Lambda$ est de torsion. Soit donc $\E \in \ker \Lambda$ représentée par une extension
\begin{equation*}
0 \to V_0 \overset{\iota_0}{\longrightarrow} W_0 \overset{\pi_0}{\longrightarrow} U_0 \to 0.
\end{equation*}
Par hypothèse, il existe une section $\sigma : U \to W$ avec $W = E \otimes_{\Oe} W_0$. Comme $U_0$ et $V_0$ sont sans torsion il en est de même pour $W_0$, d'où une injection $W_0 \hookrightarrow W$. Soit $n \in \Nbb$. On peut choisir $n$ suffisamment grand pour que $\pe^n \sigma (U_0) \subset W_0$. Alors $\pe^n\sigma$ induit une section de l'extension
\begin{equation*}
0 \to V_0 \overset{\iota_0}{\longrightarrow} \iota_0(V_0)+\pe^nW_0 \overset{\pe^{-n}\pi_0}{\longrightarrow} U_0 \to 0.
\end{equation*}
Or la classe de cette extension est $\pe^n \E$. Donc $\pe^n \E=0$ et $\E$ est de torsion.

Montrons que $\im \Lambda$ est un réseau de $\Ext_H^1(U,V)$. Il suffit de montrer que toute classe de $\Ext_H^1(U,V)$ est envoyée dans $\im \Lambda$ par une homothétie. Soit donc $\E \in \Ext_H^1(U,V)$ représentée par une extension
\begin{equation*}
0 \to V \overset{\iota}{\longrightarrow} W \overset{\pi}{\longrightarrow} U \to 0.
\end{equation*}
Soit $W_0$ une boule de $W$ stable par $H$. On peut supposer $\iota^{-1}(W_0)=V_0$, quitte à remplacer $W_0$ par $\iota(V_0) + \pe^nW_0$ avec $n \in \Nbb$ suffisamment grand pour que $\iota^{-1}(\pe^nW_0) \subset V_0$.
Soit $n \in \Nbb$. La classe $\pe^n \E$ est représentée par l'extension
\begin{equation*}
0 \to V \overset{\iota}{\longrightarrow} W \overset{\pe^{-n}\pi}{\longrightarrow} U \to 0.
\end{equation*}
On peut choisir $n$ suffisamment grand pour que $U_0 \subset \pe^{-n}\pi(W_0)$. Alors $W_0 \cap (\pe^{-n} \pi)^{-1}(U_0)$ est une boule de $W$ stable par $H$ et par construction c'est une extension de $U_0$ par $V_0$. Ainsi $\pe^n \E \in \im \Lambda$.

Le $\Oe$-module $U_0$ est plat car sans torsion. Pour tout entier $k \geq 1$, la réduction modulo $\pe^k$ des extensions induit donc une application $\Oe$-linéaire $\upsilon_k : \Ext_H^1(U_0,V_0) \to \Ext_H^1(U_k,V_k)$, d'où une application $\Oe$-linéaire
\begin{equation*}
\Upsilon : \Ext_H^1(U_0,V_0) \to \varprojlim \Ext_H^1(U_k,V_k)
\end{equation*}
où les morphismes de transition du système projectif sont les applications $\Oe$-linéaires $\upsilon_{i,j} : \Ext_H^1(U_j,V_j) \to \Ext_H^1(U_i,V_i)$ induites par la réduction modulo $\pe^i$ des extensions pour tous $1 \leq i \leq j$ entiers (le $\A{j}$-module $U_j$ étant plat par changement de base).

Montrons que $\Upsilon$ est surjective. Soit $(\E_k) \in \varprojlim \Ext_H^1(U_k,V_k)$ représentée par des extensions
\begin{equation*}
(0 \to V_k \overset{\iota_k}{\longrightarrow} W_k \overset{\pi_k}{\longrightarrow} U_k \to 0).
\end{equation*}
Ces extensions forment un système projectif qui vérifie la condition de Mittag-Leffler (car les morphismes de transition $V_j \to V_i$ sont surjectifs pour tous $1 \leq i \leq j$ entiers). En passant à la limite projective et en notant $W_0 = \varprojlim W_k$, on obtient donc une suite exacte courte de représentations de $H$ sur $\Oe$
\begin{equation*}
0 \to V_0 \overset{\iota_0}{\longrightarrow} W_0 \overset{\pi_0}{\longrightarrow} U_0 \to 0.
\end{equation*}
Comme $W_0$ est $\pe$-adiquement séparé et complet (en tant que limite projective de $\Oe$-modules $\pe$-adiquement discrets) et sans torsion (puisque $V_0$ et $U_0$ le sont), on obtient une classe $\E_0 \in \Ext_H^1(U_0,V_0)$. Par construction, on a $\Upsilon(\E_0) = (\E_k)$.

Montrons que $\Upsilon$ est injective. Soit $\E_0 \in \ker \Upsilon$ représentée par l'extension
\begin{equation*}
0 \to V_0 \overset{\iota_0}{\longrightarrow} W_0 \overset{\pi_0}{\longrightarrow} U_0 \to 0.
\end{equation*}
Par hypothèse, il existe une section $U_k \to W_k$ avec $W_k = W_0/\pe^k W_0$ pour tout entier $k \geq 1$.
Comme $U_k$ est de type fini sur $A[H]$ (car $U$ est résiduellement de longueur finie) et $V_k$ est admissible (car $V$ est admissible), $\Hom_H(U_k,V_k)$ est de type fini sur $\A{k}$ d'après \cite[Lemme 2.3.10]{Em1} donc fini.
Or, deux sections $U_k \to W_k$ diffèrent par un élément de $\Hom_H(U_k,V_k)$. On en déduit qu'elles sont en nombre fini. On peut donc choisir par récurrence une section $\sigma_k : U_k \to W_k$ pour tout entier $k \geq 1$, de sorte que $\sigma_j \equiv \sigma_i \pmod{\pe^i}$ pour tous $1 \leq i \leq j$ entiers. On obtient ainsi une section $\sigma_0 : U_0 \to V_0$, d'où $\E_0=0$.

On a ainsi montré que $\Upsilon$ est un isomorphisme. Par ce qui précède, on a une application $\Oe$-linéaire
\begin{equation*}
\Lambda \circ \Upsilon^{-1} : \varprojlim \Ext_H^1(U_k,V_k) \to \Ext_H^1(U,V)
\end{equation*}
dont le noyau est le sous-$\Oe$-module de torsion et dont l'image est un réseau. En tensorisant par $E$ sur $\Oe$, on obtient donc l'isomorphisme de l'énoncé.

Il reste à démontrer l'inégalité de l'énoncé. Montrons que pour tout entier $k \geq 1$, le noyau de l'application de réduction
\begin{equation*}
\upsilon_k : \Ext_H^1(U_0,V_0) \to \Ext_H^1(U_k,V_k)
\end{equation*}
est le sous-$\Oe$-module $\pe^k \Ext_H^1(U_0,V_0)$.
Comme $\Ext_H^1(U_k,V_k)$ est un module de $\pe^k$-torsion, il suffit de montrer l'inclusion $\ker \upsilon_k \subset \pe^k \Ext_H^1(U_0,V_0)$. Soit donc $\E_0 \in \ker \upsilon_k$ représentée par l'extension
\begin{equation*}
0 \to V_0 \overset{\iota_0}{\longrightarrow} W_0 \overset{\pi_0}{\longrightarrow} U_0 \to 0.
\end{equation*}
Par hypothèse, il existe une section $W_k \to V_k$ avec $W_k = W_0/\pe^k W_0$. On note $\widetilde{\sigma}_k : W_0 \to V_k$ sa composée avec la projection $W_0 \twoheadrightarrow W_k$. Alors $\E_0 = \pe^k \E_0'$ avec $\E_0' \in \Ext_H^1(U_0,V_0)$ la classe de l'extension
\begin{equation*}
0 \to V_0 \overset{\pe^k\iota_0}{\longrightarrow} \ker \widetilde{\sigma}_k \overset{\pi_0}{\longrightarrow} U_0 \to 0,
\end{equation*}
d'où $\E_0 \in \pe^k \Ext_H^1(U_0,V_0)$.

On a ainsi montré que pour tout entier $k \geq 1$, la réduction modulo $\pe^k$ des extensions induit une injection $\Oe$-linéaire
\begin{equation*}
\Ext_H^1(U_0,V_0) / \pe^k \Ext_H^1(U_0,V_0) \hookrightarrow \Ext_H^k(U_k,V_k).
\end{equation*}
Avec $k=1$, on en déduit l'inégalité de l'énoncé.
\end{proof}

\bibliographystyle{alpha-fr}
\bibliography{these}

\end{document}